\documentclass[11pt]{article}
\usepackage[T1]{fontenc}
\usepackage{lmodern}
\usepackage[a4paper,margin=1in]{geometry}
\usepackage{amsmath,amssymb,amsthm,mathtools}
\usepackage{booktabs}
\usepackage[hidelinks]{hyperref}

\newtheorem{theorem}{Theorem}[section]
\newtheorem{proposition}[theorem]{Proposition}
\newtheorem{corollary}[theorem]{Corollary}
\newtheorem{lemma}[theorem]{Lemma}
\newtheorem{remark}[theorem]{Remark}
\newtheorem{definition}[theorem]{Definition}

\newcommand{\KL}{D_{\mathrm{KL}}}
\newcommand{\etaKL}{\eta_{\mathrm{KL}}}
\newcommand{\gapKL}{\gamma_{\mathrm{KL}}}
\newcommand{\rateKL}{\Lambda_{\mathrm{KL}}}
\newcommand{\MI}{I}
\newcommand{\E}{\mathbb{E}}
\newcommand{\TV}{\mathrm{TV}}
\newcommand{\Ent}{\mathrm{Ent}}

\title{Retention Profiles and KL Contraction Bounds in Finite Markov Chains}
\author{
  Saurav Jadhav\thanks{Independent Researcher, Pune, India.
    Email: sauravjadhav698@gmail.com}
}
\date{}

\begin{document}

\maketitle

\begin{abstract}
We study Kullback--Leibler (KL) contraction in finite Markov chains through a
row-wise perspective. Evaluating the SDPI ratio at point masses yields a
state-indexed \emph{retention profile}
$r(x)=\KL(P(x,\cdot)\|\pi)/\log(1/\pi(x))$ and a
\emph{localization ratio} $L(P)=\bar r_\pi/M\in[0,1]$ (with
$M=\max_x r(x)$, $\bar r_\pi=\E_\pi r$) that distinguishes localized from
global contraction obstructions. Our main contributions are
(i) a convexity-gap identity showing that the gap between the row-averaged
divergence and $\KL(\mu P\|\pi)$ equals the mutual information
$I_\mu(X;Y)$, and a derived decomposition of the contraction ratio into
entropy inflation and a mutual-information penalty;
(ii) a Cheeger-type lower bound on $M$, tying the bottleneck geometry of
$P$ directly to the row-retention profile;
(iii) an explicit construction proving that $L(P_n)\to 0$ does not force
$\etaKL(P_n)/M_n\to 1$, identifying cardinality of high-retention states
(not their $\pi$-mass) as the decisive quantity.
Alongside these, we record structural consequences, optimal
Markov/reverse-Markov tail bounds for $r$, a Bhatia--Davis variance bound,
two-sided spectral bounds with an explicit cubic correction, a
KL/Pinsker mixing-time bound, and tensorization for product chains. We
further show that $L(P)$ is structurally decoupled from the spectral gap,
the Cheeger constant, and the mixing time: every vertex-transitive chain
satisfies $L(P)=1$ regardless of its mixing speed, and the empirical rank
correlations between $L(P)$ and these classical invariants on a diverse
but limited test suite are essentially zero. The numerical experiments are
exploratory and not used as evidence for a universal classification
theorem.
\end{abstract}

\textbf{Keywords:} Kullback--Leibler divergence, Markov chains, entropy
contraction, functional inequalities, mixing time.

\section{Introduction and Preliminaries}
\label{sec:prelim}

\subsection{Motivation}

The rate at which a Markov chain forgets its initial state, measured through
Kullback--Leibler divergence to stationarity, is controlled by the
\emph{KL contraction coefficient} $\etaKL(P)$, formally recognized as the
Strong Data Processing Inequality (SDPI) constant for the Markov kernel under
KL divergence~\cite{PolyanskiyWu2017,Raginsky2016,PolyanskiyWu2025}. Computing $\etaKL(P)$ requires
optimizing over the full simplex of initial distributions, making it difficult
to analyze directly. A complementary line of work studies the
\emph{input-dependent} contraction coefficient $\etaKL(\pi,P)$, which fixes
the reference distribution before taking the supremum over the comparison
inputs; this perspective is central to mixing-time analyses for Markov
chains (see~\cite[Def.~33.10 and Ex.~33.8]{PolyanskiyWu2025}, \cite{MakurZheng2015,OrdentlichPolyanskiy2021,GeorgeZhengBansal2024}).
The local KL retention profile $r(x)$ introduced below is precisely the
SDPI ratio defining $\etaKL(P)$ evaluated at the point-mass test measure
$\mu=\delta_x$: since $\KL(\delta_x\|\pi)=\log(1/\pi(x))$ and
$\KL(\delta_x P\|\pi)=\KL(P(x,\cdot)\|\pi)$, one has
$r(x)=\KL(\delta_xP\|\pi)/\KL(\delta_x\|\pi)$. Taking the state index $x$
as the parameter turns this sup-style quantity into a state-indexed profile.
Classical approaches to contraction---spectral gap,
log-Sobolev inequalities~\cite{Gross1975,BobkovTetali2006},
Cheeger constants~\cite{SinclairJerrum1989,LawlerSokal1988} provide
bounds on $\etaKL(P)$
but do not expose whether the worst-case contraction obstruction is
concentrated on a few states or spread across the entire state space.

Thus, the central problem is not merely to bound $\etaKL(P)$, but to
understand where the obstruction to KL contraction is located. Two chains
may have similar global contraction behavior while having very different
spatial profiles of entropy retention. In one chain, the obstruction may be
carried by a few exceptional states, while in another it may be spread
throughout the state space. Classical spectral and isoperimetric quantities
do not directly separate these cases.

This paper develops a \emph{row-based} framework that decomposes the
contraction problem into local contributions from individual transition rows.
We introduce two quantities: the \emph{retention profile} $r(x)$, measuring
the amount of KL divergence each state retains in one step
(cf.\ Cohen et al.~\cite{CohenEtAl1993} and Csisz\'{a}r--Shields~\cite{CsiszarShields2004}
for earlier perspectives on relative-entropy contraction under stochastic
maps), and the
\emph{localization ratio} $L(P)$, measuring the gap between typical and
worst-case retention. These quantities can be computed directly from the
transition kernel without eigenvector decomposition, and they provide
structural information---specifically, the spatial shape of contraction
obstructions that is analytically and empirically distinct from classical
invariants.

Throughout the paper, the term \emph{localization} refers to the spatial
concentration of contraction obstructions (high-$r(x)$ states) within the
state space and should not be confused with eigenvector localization in the
sense of Anderson localization or spectral-graph theory, nor with the
\emph{stochastic localization} of Eldan-type schemes used in sampling and
geometric analysis of measures. (The connection
between the retention profile and eigenvector localization is, however,
explored in Section~\ref{sec:spectral}.)

The central organizing point is that the pair $(\bar r_\pi,L(P))$ separates
the size of the typical row-level obstruction from its spatial concentration.
This separation leads to several spectral, geometric, and mixing-time
consequences. A key result in this direction is the \emph{convexity-gap theorem}
(Theorem~\ref{thm:central-bound}), which identifies the mutual information
$I_\mu(X;Y)$ as the exact convexity gap between the average row divergence and
the divergence after one step.

The main contribution of this paper is the framework itself---the retention
profile $r(x)$ and localization ratio $L(P)$---and the structural
interpretation that they enable. Individual bounds (tail inequalities,
variance bounds, spectral estimates) often follow from standard tools, such
as the Markov and Chebyshev inequalities, the Bhatia--Davis inequality,
and the standard bound of the KL divergence by $\chi^2$ divergence (used in
the proof of Theorem~\ref{thm:spectral-upper}); the value lies in their
unified application to the retention profile and the new perspective this
provides on contraction geometry.

\subsection{Related work}
\label{sec:related}

\paragraph{Strong data processing inequalities and entropy contraction.}
The KL contraction coefficient $\etaKL(P)$ is the SDPI constant of the
Markov kernel under KL divergence. Polyanskiy--Wu~\cite{PolyanskiyWu2017,PolyanskiyWu2025}
and Raginsky~\cite{Raginsky2016} developed systematic frameworks for strong
data processing inequalities in channels and Bayesian networks, including
bounds via $\Phi$-Sobolev inequalities. The \emph{input-dependent}
contraction coefficient $\etaKL(\pi,P)$, in which the reference distribution
is fixed and the supremum is taken over comparison inputs, plays a central
role in mixing-time analysis (\cite[\S33.4]{PolyanskiyWu2025}) and has
recently been studied at the level of general $f$-divergences by
George--Zheng--Bansal~\cite{GeorgeZhengBansal2024},
who show that $\eta_{\chi^2}(P_X,P)$ governs the contraction rate for many
smooth $f$-divergences; Ordentlich--Polyanskiy~\cite{OrdentlichPolyanskiy2021}
establish that the input-\emph{independent} $\etaKL(P)$ is attained at a
binary-input subchannel, and Makur--Zheng~\cite{MakurZheng2015} compare
input-dependent contraction coefficients across $f$-divergences via linear
bounds. Our row-based lower bound (Theorem~\ref{thm:general-row}) probes the
SDPI sup defining $\etaKL(P)$ at the point-mass test measures $\mu=\delta_x$;
the ratio $\KL(\delta_x P\|\pi)/\KL(\delta_x\|\pi)$ is the retention profile
$r(x)=\KL(P(x,\cdot)\|\pi)/\log(1/\pi(x))$. To our knowledge, this
state-indexed evaluation and the derived chain invariants
$\bar r_\pi$, $M$, and $L(P)$ do not appear elsewhere in the SDPI lineage.

\paragraph{Modified log-Sobolev inequalities and entropic curvature.}
The continuous-time modified log-Sobolev inequality, introduced by
Gross~\cite{Gross1975} and extended to discrete settings by
Bobkov--Tetali~\cite{BobkovTetali2006} and
Diaconis--Saloff-Coste~\cite{DiaconisSaloffCoste1996}, controls entropy
decay along Markov semigroups. Remark~\ref{rmk:mlsi-equivalence} restates
the folklore reformulation of $\etaKL(P)$ as the discrete-time full-step
entropy contraction constant, clarifying the relationship between these
perspectives (see also~\cite[Eq.~(20) and \S2]{CaputoChenGuPolyanskiy2024}).
Miclo~\cite{Miclo2015} demonstrated that discrete-time and continuous-time
entropy contraction can behave differently, motivating our focus on the
exact one-step setting. A parallel curvature-based line of work descended
from Bakry--\'Emery and Ollivier and developed for discrete chains by
Caputo--M\"unch--Salez~\cite{CaputoMunchSalez2024},
Caputo--Salez~\cite{CaputoSalez2024},
Salez~\cite{Salez2026},
M\"unch--Salez~\cite{MunchSalez2022},
Kamtue--Liu--M\"unch--Peyerimhoff~\cite{KamtueLiuMunchPeyerimhoff2023},
Pedrotti~\cite{Pedrotti2023},
and Bristiel--Caputo~\cite{BristielCaputo2021}, derives MLSI and entropy
decay from non-negative discrete Ricci curvature; this is complementary to
our purely informational, row-level approach. Cutoff phenomena under
non-negative curvature were established by Salez~\cite{Salez2021Cutoff},
and isoperimetric routes to total-variation decay were developed by
Hutchcroft--Lopez~\cite{HutchcroftLopez2024}.

\paragraph{Cheeger bounds and spectral methods.}
The classical Cheeger inequality~\cite{SinclairJerrum1989,LawlerSokal1988}
relates the spectral gap to an isoperimetric constant. Our Cheeger-type
bound (Theorem~\ref{thm:cheeger-M}) operates at the level of individual
transition rows rather than the global spectrum, connecting the bottleneck
geometry directly to the retention profile.
Chung~\cite{Chung1997} provides foundational background on spectral graph
theory, which informs our eigenvector-localization analysis in
Section~\ref{sec:spectral}.

\paragraph{Relative entropy under stochastic maps.}
Cohen et al.~\cite{CohenEtAl1993} studied contraction of relative entropy
under stochastic matrices, and
Csisz\'{a}r--Shields~\cite{CsiszarShields2004} surveyed the broader
information-theoretic landscape. Our retention profile $r(x)$ refines
these perspectives by localizing the contraction analysis to individual rows
and measuring their relative contribution under the stationary law.
The hypercontractivity approach of
Ahlswede--G\'{a}cs~\cite{AhlswedeGacs1976} and the maximal-correlation
framework of Anantharam et al.~\cite{AnantharamEtAl2013} provide related
but distinct routes to data processing inequalities.

\paragraph{Mixing times.}
Levin--Peres~\cite{LevinPeres2017} and
Montenegro--Tetali~\cite{MontenegroTetali2006} provide comprehensive
treatments of mixing-time bounds via spectral and functional-inequality
methods. Our mixing-time bound (Theorem~\ref{thm:mixing-upper}) is not
competitive with the sharpest results in these references; its purpose is to
connect the row-based quantities to the mixing time, completing the
structural picture.

\subsection{Organization}

Section~\ref{sec:row-bound} establishes the row-based lower bound
and its connection to discrete-time entropy contraction.
Section~\ref{sec:retention} introduces the retention profile and
localization ratio with optimal tail bounds.
Section~\ref{sec:variance} presents the variance--retention bound.
Section~\ref{sec:spectral} develops two-sided spectral connections.
Section~\ref{sec:cheeger} proves the Cheeger-type lower bound.
Section~\ref{sec:convexity-gap} presents the convexity gap theorem.
Section~\ref{sec:mixing} derives mixing-time consequences.
Section~\ref{sec:examples} presents examples, and
Section~\ref{sec:discussion} presents the conclusion and discusses open
questions.
Appendix~\ref{app:counterexample} provides a formal construction proving the
limits of $L(P)$ as a diagnostic for $\etaKL(P)/M$, and
Appendix~\ref{sec:tensor} records tensorization properties for product
chains.

\subsection{Setup and notation}

Let $P$ be a Markov transition kernel on a finite state space $S$ with
$|S|=n\ge2$. Assume that $P$ admits a stationary distribution $\pi$ with full
support, i.e.,
\[
  \pi(x)>0, \qquad \forall x\in S.
\]
Then $0<\pi(x)<1$ for every $x\in S$. We write
\[
  \pi_{\min}:=\min_{x\in S}\pi(x),\qquad
  \pi_{\max}:=\max_{x\in S}\pi(x).
\]

For probability measures $\mu,\nu$ on $S$, define the Kullback--Leibler
divergence (see, e.g.,~\cite{CoverThomas2006}; see also~\cite{Amari2016} for the
information-geometric perspective) by
\[
  \KL(\mu\|\nu)
  =
  \sum_{x\in S}\mu(x)\log\frac{\mu(x)}{\nu(x)}.
\]
All logarithms are natural logarithms throughout.

For any initial distribution $\mu\neq\pi$, we refer to the quotient
$\KL(\mu P\|\pi)/\KL(\mu\|\pi)$ as the \emph{KL contraction ratio}
of~$\mu$. Define the \emph{KL contraction coefficient} of $P$ as the
supremum of this ratio:
\[
  \etaKL(P)
  :=
  \sup_{\mu\neq\pi}
  \frac{\KL(\mu P\|\pi)}{\KL(\mu\|\pi)}.
\]
Define the \emph{KL contraction gap} by
$\gapKL(P):=1-\etaKL(P)$.
When $\etaKL(P)>0$, define the \emph{exponential decay rate} by
$\rateKL(P):=-\log\etaKL(P)$.
Since $\pi P=\pi$, the data-processing inequality gives
\[
  \KL(\mu P\|\pi)=\KL(\mu P\|\pi P)\le \KL(\mu\|\pi),
\]
so $0\le \etaKL(P)\le 1$.

For each $x\in S$, the row-to-stationary divergence is written as
\[
  \KL\bigl(P(x,\cdot)\,\|\,\pi\bigr)
  =
  \sum_{y\in S}P(x,y)\log\frac{P(x,y)}{\pi(y)}.
\]

\begin{definition}[Entropy functional]
The \emph{entropy functional} relative to~$\pi$ is
\[
  \Ent_\pi(f)
  := \sum_{x\in S} f(x)\log f(x)\,\pi(x)
     - \Bigl(\sum_{x\in S} f(x)\pi(x)\Bigr)
       \log\Bigl(\sum_{x\in S} f(x)\pi(x)\Bigr)
\]
for non-negative functions $f\ge 0$ on $S$.
When $\mu$ has density $f=d\mu/d\pi$ with respect to $\pi$, one has
$\Ent_\pi(f) = \KL(\mu\|\pi)$.
\end{definition}

When $P$ is \emph{reversible} with respect to $\pi$ (i.e.,
$\pi(x)P(x,y)=\pi(y)P(y,x)$ for all $x,y$), we write
$1=\lambda_1\ge\lambda_2\ge\cdots\ge\lambda_n\ge -1$ for the eigenvalues of
$P$ in $L^2(\pi)$, with the corresponding orthonormal eigenfunctions
$\phi_1\equiv 1,\phi_2,\ldots,\phi_n$.

Define the \emph{absolute spectral gap} by
\[
  \gamma(P):=1-\max_{i\ge 2}|\lambda_i|.
\]
Define the \emph{Cheeger constant} of $P$
(see~\cite{SinclairJerrum1989,LawlerSokal1988}) by
\[
  h_P:=\min_{\substack{A\subset S\\0<\pi(A)\le 1/2}}
  \frac{\sum_{x\in A,\,y\notin A}\pi(x)P(x,y)}{\pi(A)}.
\]
Define the \emph{total-variation mixing time} by
\[
  t_{\mathrm{mix}}(\varepsilon)
  :=\min\bigl\{t\ge 0:\max_{x\in S}\|P^t(x,\cdot)-\pi\|_{\TV}\le\varepsilon\bigr\},
\]
where $\|\mu-\nu\|_{\TV}=\frac{1}{2}\sum_{x\in S}|\mu(x)-\nu(x)|$.

It is crucial to distinguish the discrete-time contraction of the kernel $P$
from the continuous-time mixing governed by semigroup $e^{t(P-I)}$.
Recent work has demonstrated that the discrete-time full-step contraction
(characterized by the SDPI constant) and the continuous-time entropy
contraction (characterized by the classical modified log-Sobolev inequality) can behave
very differently; see~\cite{Miclo2015} for a detailed comparison in the
context of hyperboundedness (see also Ahlswede--G\'{a}cs~\cite{AhlswedeGacs1976}
and Anantharam et al.~\cite{AnantharamEtAl2013} for hypercontractivity and
maximal-correlation approaches to data processing inequalities).
This motivated us to focus on the exact discrete-time step.

\section{Row-Based Lower Bound and One-Step Entropy Contraction}
\label{sec:row-bound}

The starting observation is that evaluating the contraction coefficient at
point masses $\mu=\delta_x$ yields a lower bound on $\etaKL(P)$ in terms of
individual transition rows. This is valid because $\etaKL(P)$ is defined as
a supremum over \emph{all} $\mu\neq\pi$; in particular, since $\pi$ has full support ($\pi(x)>0$ for all $x\in S$), every point mass
$\delta_x$ is an admissible test distribution.

\begin{theorem}[Row-based lower bound]
\label{thm:general-row}
Let $P$ be a Markov kernel on a finite state space $S$ with $|S|\ge2$ and
stationary distribution $\pi$ of full support. Then
\[
  \etaKL(P)
  \;\ge\;
  \max_{x\in S}
  \frac{\KL\bigl(P(x,\cdot)\,\|\,\pi\bigr)}{\log\!\bigl(1/\pi(x)\bigr)}.
\]
Equivalently,
$\gapKL(P)\le 1-
\max_{x\in S}
\frac{\KL\bigl(P(x,\cdot)\,\|\,\pi\bigr)}{\log\!\bigl(1/\pi(x)\bigr)}$.
\end{theorem}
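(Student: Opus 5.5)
The plan is to test the supremum defining $\etaKL(P)$ at point masses, as the remark preceding the statement suggests. Fix $x\in S$ and take $\mu=\delta_x$. This measure is admissible, i.e.\ $\delta_x\neq\pi$: since $|S|\ge 2$ and $\pi$ has full support, $\pi(x)<1$, whereas $\delta_x(x)=1$.

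We then compute the two divergences in the ratio. The denominator is
\[
\KL(\delta_x\|\pi)=1\cdot\log\frac{1}{\pi(x)}=\log\frac{1}{\pi(x)},
\]
which is strictly positive and finite because $0<\pi(x)<1$. For the numerator, note that $\delta_xP=P(x,\cdot)$, so
\[
\KL(\delta_xP\|\pi)=\KL\bigl(P(x,\cdot)\,\|\,\pi\bigr).
\]
This is finite because $\pi$ has full support. Hence the contraction ratio of $\delta_x$ is exactly $\KL(P(x,\cdot)\|\pi)/\log(1/\pi(x))$.

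By the definition of $\etaKL(P)$ as a supremum over all $\mu\neq\pi$, this ratio is at most $\etaKL(P)$ for every $x$. Since $S$ is finite, the maximum over $x$ exists, which gives the displayed inequality. The equivalent form follows by subtracting both sides from $1$ and using $\gapKL(P)=1-\etaKL(P)$.

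There is no real obstacle here. The only care needed is to check admissibility ($\delta_x\neq\pi$) and that the denominator is nonzero; both follow from $|S|\ge 2$ together with full support of $\pi$.
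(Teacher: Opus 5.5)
Your proposal is correct and is essentially the paper's proof. You test the supremum at $\mu=\delta_x$, identify $\KL(\delta_x\|\pi)=\log(1/\pi(x))$ and $\KL(\delta_xP\|\pi)=\KL(P(x,\cdot)\|\pi)$, and then maximize over $x$; your explicit checks that $\delta_x\neq\pi$ and that the denominator is positive (both from $|S|\ge 2$ and full support) are details the paper leaves implicit.
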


\begin{proof}
Fix $x\in S$ and choose $\mu=\delta_x$.
Because $\pi$ has full support, $\KL(\delta_x\|\pi)=\log(1/\pi(x))$.
Moreover,
\[
  \KL(\delta_x P\|\pi)
  =
  \sum_{y\in S}P(x,y)\log\frac{P(x,y)}{\pi(y)}
  =
  \KL\bigl(P(x,\cdot)\,\|\,\pi\bigr).
\]
Therefore,
\[
  \frac{\KL(\delta_xP\|\pi)}{\KL(\delta_x\|\pi)}
  =
  \frac{\KL(P(x,\cdot)\|\pi)}{\log(1/\pi(x))}.
\]
Since $\etaKL(P)$ is the supremum over all $\mu\neq\pi$, we have
$\etaKL(P)\ge \KL(\delta_xP\|\pi)/\KL(\delta_x\|\pi)$ for each $x\in S$.
Taking the maximum over $x$ proves the first claim. Subtracting from $1$
yields the gap form.
\end{proof}

\begin{remark}[Discrete-time entropy contraction equivalence; folklore]
\label{rmk:mlsi-equivalence}
Let $P$ be a finite, \emph{reversible} (with respect to~$\pi$),
discrete-time Markov kernel. Define the \emph{discrete-time full-step
entropy contraction gap} by
\[
  \alpha_1(P) := \inf_{f > 0,\; f \not\equiv \text{const}}
  \frac{\Ent_\pi(f) - \Ent_\pi(Pf)}{\Ent_\pi(f)}.
\]
Then $\etaKL(P) = 1 - \alpha_1(P)$. This identity is the standard
unrolling of the sup-form of $\etaKL$ in densities (cf.\
\cite[Eq.~(20)]{CaputoChenGuPolyanskiy2024} and
\cite{PolyanskiyWu2017,PolyanskiyWu2025}); we record the short proof below
for completeness and to fix the notation used in subsequent sections.
\end{remark}

\begin{proof}[Proof of Remark~\ref{rmk:mlsi-equivalence}]
The ratio $[\Ent_\pi(f)-\Ent_\pi(Pf)]/\Ent_\pi(f)$ is invariant under
positive scaling $f\mapsto cf$ (since $\Ent_\pi(cf)=c\,\Ent_\pi(f)$ and
$P$ is linear), so the infimum over all $f>0$ with $f\not\equiv\text{const}$
equals the infimum over the subset of $\pi$-densities (i.e.,
$f>0$ with $\E_\pi[f]=1$). Working with densities, reversibility of $P$
is used to show that $Pf$ is the $\pi$-density of
$\mu P$ when $f = d\mu/d\pi$. Specifically, for $f = d\mu/d\pi > 0$, by
reversibility $\pi(z)P(z,y)=\pi(y)P(y,z)$, hence
\[
  (Pf)(y)
  = \sum_z P(y,z) f(z)
  = \sum_z \frac{\pi(z)P(z,y)}{\pi(y)}\,\frac{\mu(z)}{\pi(z)}
  = \frac{(\mu P)(y)}{\pi(y)},
\]
which confirms that $Pf$ is the $\pi$-density of $\mu P$. Consequently
$\Ent_\pi(f) = \KL(\mu\|\pi)$ and $\Ent_\pi(Pf) = \KL(\mu P\|\pi)$, giving
\[
  \frac{\Ent_\pi(f) - \Ent_\pi(Pf)}{\Ent_\pi(f)}
  = 1 - \frac{\KL(\mu P\|\pi)}{\KL(\mu\|\pi)}.
\]
Thus, the variational formula above holds for all full-support distributions
$\mu\neq\pi$. This is enough to recover the same supremum as in the definition
of $\etaKL(P)$: if $\mu$ is any distribution, set
$\mu_\varepsilon=(1-\varepsilon)\mu+\varepsilon\pi$. Then
$\mu_\varepsilon$ has full support and, since $\pi$ has full support,
\[
  \KL(\mu_\varepsilon\|\pi)\to\KL(\mu\|\pi),
  \qquad
  \KL(\mu_\varepsilon P\|\pi)\to\KL(\mu P\|\pi)
\]
as $\varepsilon\downarrow0$. Therefore the supremum over full-support
$\mu\neq\pi$ equals the supremum over all $\mu\neq\pi$, and hence
\[
  \alpha_1(P)
  = \inf_{\mu\neq\pi}\Bigl(1 - \frac{\KL(\mu P\|\pi)}{\KL(\mu\|\pi)}\Bigr)
  = 1 - \sup_{\mu\neq\pi}\frac{\KL(\mu P\|\pi)}{\KL(\mu\|\pi)}
  = 1 - \etaKL(P). \qedhere
\]
\end{proof}

\begin{remark}[Terminology and prior occurrences]
\label{rmk:mlsi-term}
The constant $\alpha_1(P)$ above is sometimes written $\delta(\pi,P)$ in
the SDPI literature (cf.\ \cite[Eq.~(20)]{CaputoChenGuPolyanskiy2024}); it
is the discrete-time full-step entropy contraction gap, and is distinct
from the continuous-time modified log-Sobolev constant studied in, e.g.,
Gross~\cite{Gross1975}, Bobkov--Tetali~\cite{BobkovTetali2006}, and
Diaconis--Saloff-Coste~\cite{DiaconisSaloffCoste1996}. The reformulation
$\etaKL(P)=1-\alpha_1(P)$ is folklore; we use it here as a
functional-inequality reformulation of the SDPI constant of
Polyanskiy--Wu~\cite{PolyanskiyWu2017,PolyanskiyWu2025} and
Raginsky~\cite{Raginsky2016}.
\end{remark}

\begin{remark}[Point-mass entropy contraction ratio]
\label{rmk:mlsi}
For the point-mass test function $f_x(y) = \delta_{xy}/\pi(x)$,
we have $\Ent_\pi(f_x) = \log(1/\pi(x))$. A direct computation
(see Appendix~\ref{app:mlsi-detail}) gives
$\Ent_\pi(Pf_x) = \KL(P(x,\cdot)\|\pi)$ when $P$ is reversible. Thus
\[
  r(x)
  = \frac{\KL(P(x,\cdot)\|\pi)}{\log(1/\pi(x))}
  = \frac{\Ent_\pi(Pf_x)}{\Ent_\pi(f_x)},
\]
and $M = \max_x r(x)$ is the \emph{point-mass entropy contraction ratio}. The
identity $\etaKL(P)=M$ holds if and only if the row-based lower bound is
tight, equivalently, if the discrete-time entropy contraction gap is attained
or approached by boundary point-mass tests.
\end{remark}

\begin{corollary}[Row-entropy bound for uniform $\pi$]
\label{cor:uniform}
Assume $\pi$ is uniform on $S$, that is, $\pi(x)=1/n$.
Define
\[
  H(P(x,\cdot))=-\sum_{y\in S}P(x,y)\log P(x,y),
  \qquad
  H_{\min}(P):=\min_{x\in S}H(P(x,\cdot)).
\]
Then
\[
  \etaKL(P)\ge 1-\frac{H_{\min}(P)}{\log n},
  \qquad
  \gapKL(P)\le \frac{H_{\min}(P)}{\log n}.
\]
\end{corollary}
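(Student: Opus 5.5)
The plan is to specialize Theorem~\ref{thm:general-row} to $\pi(x)=1/n$ and rewrite each row divergence in terms of row entropy. With $\pi$ uniform, the denominator becomes $\log(1/\pi(x))=\log n$ for every $x$. The numerator splits as
\[
  \KL\bigl(P(x,\cdot)\,\|\,\pi\bigr)
  =\sum_{y}P(x,y)\log P(x,y)+\sum_y P(x,y)\log n
  =\log n - H(P(x,\cdot)),
\]
using that each row sums to one. Terms with $P(x,y)=0$ are read with the usual convention $0\log 0=0$, so the identity holds with no support assumption on the rows.

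Substituting this into Theorem~\ref{thm:general-row} gives
\[
  \etaKL(P)\ge \max_{x\in S}\frac{\log n - H(P(x,\cdot))}{\log n}
  = 1-\frac{\min_x H(P(x,\cdot))}{\log n}
  = 1-\frac{H_{\min}(P)}{\log n}.
\]
Here $\log n>0$ because $n\ge 2$, which justifies dividing. The maximum of $1-H/\log n$ is attained at the row of minimal entropy. The gap form then follows by subtracting from $1$, exactly as in the theorem.

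No step is a genuine obstacle; the argument is a direct substitution. The only point needing care is the zero-entry convention in the entropy identity, and it is routine.
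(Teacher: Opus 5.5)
Your proof is correct and follows the paper's argument: you specialize Theorem~\ref{thm:general-row} to $\pi(x)=1/n$, rewrite $\KL(P(x,\cdot)\|\pi)=\log n-H(P(x,\cdot))$, maximize over rows, and subtract from $1$ to get the gap form. Your notes on the $0\log 0=0$ convention and on $\log n>0$ for $n\ge 2$ are minor clarifications that the paper leaves implicit.
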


\begin{proof}
If $\pi$ is uniform, then $\log(1/\pi(x))=\log n$ for all $x\in S$. Also
\[
  \KL(P(x,\cdot)\|\pi)=\sum_{y\in S}P(x,y)\log\frac{P(x,y)}{1/n}
  =\log n - H(P(x,\cdot)).
\]
Substituting into Theorem~\ref{thm:general-row}:
\[
  \etaKL(P)\ge \max_x \frac{\log n-H(P(x,\cdot))}{\log n}
  =1-\frac{H_{\min}(P)}{\log n}.
\]
Subtracting from $1$ gives $\gapKL(P)\le H_{\min}(P)/\log n$.
\end{proof}

\section{Local Retention Profile and Localization}
\label{sec:retention}

\begin{definition}[Local KL retention and typical retention]
\label{def:r}
Define the \emph{local KL retention ratio}
\[
  r(x):=
  \frac{\KL(P(x,\cdot)\|\pi)}{\log(1/\pi(x))}.
\]
Define the \emph{typical retention} (stationary average)
\[
  \bar r_\pi:=\E_{X\sim\pi}[r(X)]=\sum_{x\in S}\pi(x)\,r(x).
\]
By data processing applied to $\delta_x$, $0\le r(x)\le1$ for every state.
\end{definition}

\begin{proposition}[Hierarchy: typical vs worst local vs worst global]
\label{prop:hierarchy}
For any $P$ with stationary distribution $\pi$,
\[
  \bar r_\pi \;\le\; \max_{x\in S} r(x) \;\le\; \etaKL(P).
\]
\end{proposition}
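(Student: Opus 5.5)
The plan is to prove the two inequalities separately, since each follows almost immediately from material already established.

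For the left inequality, $\bar r_\pi\le\max_x r(x)$, I will use that $\bar r_\pi$ is a convex combination of the values $r(x)$. The weights are $\pi(x)\ge0$ and satisfy $\sum_x\pi(x)=1$. Writing $M:=\max_{x\in S} r(x)$, which exists because $S$ is finite, we have $r(x)\le M$ for every $x$. Therefore
\[
  \bar r_\pi=\sum_{x\in S}\pi(x)\,r(x)\le \sum_{x\in S}\pi(x)\,M = M .
\]
The only thing to check is that $r(x)$ is well defined. This holds because full support together with $n\ge2$ gives $0<\pi(x)<1$, so $\log(1/\pi(x))>0$ and the denominator in Definition~\ref{def:r} never vanishes.

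For the right inequality, $\max_x r(x)\le\etaKL(P)$, I will cite Theorem~\ref{thm:general-row} directly. Its right-hand side is exactly $\max_x \KL(P(x,\cdot)\|\pi)/\log(1/\pi(x))=\max_x r(x)$ by Definition~\ref{def:r}. The underlying reason, already recorded in that proof, is that each point mass $\delta_x$ is an admissible competitor in the supremum defining $\etaKL(P)$. It is admissible because $\delta_x\neq\pi$, which follows from $\pi(x)<1$.

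I do not expect any genuine obstacle here. The only care needed is in bookkeeping the degenerate cases, namely confirming $\delta_x\ne\pi$ and positivity of the denominators. Both are guaranteed by the standing assumptions $|S|\ge2$ and full support of $\pi$. Note also that the statement does not require reversibility, and Theorem~\ref{thm:general-row} does not use it either, so the proposition holds for every $P$ in the setup.
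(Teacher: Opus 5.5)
Your proof is correct and follows essentially the same route as the paper. You bound $\bar r_\pi$ by $\max_x r(x)$ as a $\pi$-weighted convex combination, and you obtain the second inequality directly from Theorem~\ref{thm:general-row}; your checks that $\log(1/\pi(x))>0$ and $\delta_x\neq\pi$ under full support and $|S|\ge 2$ are also sound.
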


\begin{proof}
The first inequality holds because $\bar r_\pi$ is a convex combination of
the values $\{r(x)\}_{x\in S}$ with weights $\pi(x)\ge 0$ summing to~$1$.
The second inequality is given by Theorem~\ref{thm:general-row}.
\end{proof}

\begin{definition}[Localization ratio]
\label{def:rho}
Define
\[
  M:=\max_{x\in S} r(x),
  \qquad
  L(P):=\frac{\bar r_\pi}{M},
\]
with the convention $L(P)=1$ when $\bar r_\pi=M=0$.
\end{definition}

\begin{remark}[Interpretation]
\label{rmk:interpretation}
The quantity $L(P)\in[0,1]$ measures how representative the worst local
obstruction is under $\pi$:
\begin{itemize}
\item $L(P)\approx 1$ indicates a \emph{global obstruction}---typical
  states under~$\pi$ already exhibit near-worst local retention.
\item $L(P)$ well below~$1$ indicates a \emph{localized
  obstruction}---the worst local behavior is confined to a set of negligible
  $\pi$-mass.
\end{itemize}
(The convention $L(P)=1$ when $M=0$ corresponds to the trivial chain
$P(x,\cdot)=\pi$ for all~$x$, which has no contraction obstruction.)

The localization ratio is intended as a \emph{diagnostic invariant} that
captures the spatial shape of contraction obstructions. It does not
replace the spectral gap or Cheeger constant, but quantifies an aspect of
the chain geometry not captured by those classical quantities.

More generally, when $M>0$, we can define
$L_\nu(P) := \E_\nu[r(X)]/M$ for any distribution $\nu$ on~$S$; the stationary specialization
$L(P) = L_\pi(P)$ is canonical because it yields a chain invariant
independent of the initial conditions (see also Remark~\ref{rmk:transient}).
\end{remark}

\subsection{Optimal tail bounds}

\begin{theorem}[Optimal two-sided tail bounds under an arbitrary law]
\label{thm:ratio-characterization-general}
\leavevmode\par\noindent
Assume $M:=\max_{x\in S} r(x)>0$,
$Y:=r(X)/M\in[0,1]$,
and let $\nu$ be any probability distribution on $S$.
If $X\sim \nu$, define
$m_\nu := \E_\nu[Y] = \sum_{x\in S}\nu(x)\,r(x)/M$.
Then, for every $\tau\in(0,1)$,
\[
  \max\!\left\{0,\frac{m_\nu-\tau}{1-\tau}\right\}
  \le
  \nu\!\left(\{x\in S:\, r(x)\ge \tau M\}\right)
  \le
  \min\!\left\{1,\frac{m_\nu}{\tau}\right\},
\]
and equivalently,
\[
  \max\!\left\{0,1-\frac{m_\nu}{\tau}\right\}
  \le
  \nu\!\left(\{x\in S:\, r(x)\le \tau M\}\right)
  \le
  \min\!\left\{1,\frac{1-m_\nu}{1-\tau}\right\}.
\]
These bounds are sharp: given only $m_\nu$ and $M$, no stronger universal
tail bounds can be obtained. Depending on weak versus strict threshold
conventions, the extremal constants are either attained or approached by
two-point laws supported at endpoint/threshold values.
\end{theorem}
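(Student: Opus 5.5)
The plan is to reduce everything to elementary facts about a $[0,1]$-valued random variable. Under $X\sim\nu$, set $Y=r(X)/M$. Since $0\le r(x)\le M$ for every $x$, we have $Y\in[0,1]$ almost surely and $\E_\nu[Y]=m_\nu$. The event $\{r(x)\ge\tau M\}$ is exactly $\{Y\ge\tau\}$, and likewise $\{r(x)\le \tau M\}=\{Y\le\tau\}$. So the theorem is purely a statement about tails of a bounded variable with prescribed mean; no property of $P$ beyond the bounds on $r$ will be needed.

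For the upper tail bound, apply Markov's inequality to $Y\ge0$, which gives $\nu(Y\ge\tau)\le m_\nu/\tau$; capping at $1$ is trivial. For the lower tail bound, use the reverse Markov inequality. Split
\[
m_\nu=\E[Y\mathbf 1_{Y<\tau}]+\E[Y\mathbf 1_{Y\ge\tau}]\le \tau\bigl(1-p\bigr)+p,
\]
where $p=\nu(Y\ge\tau)$, using $Y\le1$ on the second event. Rearranging gives $p\ge(m_\nu-\tau)/(1-\tau)$, and the trivial bound $p\ge0$ completes the $\max$.

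For the complementary form, take complements: $\nu(Y\le\tau)=1-\nu(Y>\tau)$. The same two arguments apply with strict inequalities:
\begin{itemize}
\item Markov's inequality gives $\nu(Y>\tau)\le m_\nu/\tau$, hence $\nu(Y\le\tau)\ge 1-m_\nu/\tau$.
\item Applying Markov to $1-Y\ge0$ gives $\nu(Y\le\tau)=\nu(1-Y\ge1-\tau)\le(1-m_\nu)/(1-\tau)$.
\end{itemize}
I will state this second version directly via $1-Y$ rather than by complementing the first display, because complementing would only give the strict-threshold version.

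For sharpness, I will exhibit two-point laws on $\{0,\tau\}$, $\{0,1\}$, or $\{\tau,1\}$ with the prescribed mean $m_\nu$:
\begin{itemize}
\item When $m_\nu\le\tau$, put mass $m_\nu/\tau$ at $\tau$ and the rest at $0$. This attains $m_\nu/\tau$ in the weak upper bound.
\item When $m_\nu\ge\tau$, put mass $(m_\nu-\tau)/(1-\tau)$ at $1$ and the rest at $\tau$ for the strict event, or at $\tau-\epsilon$ for the weak event. This attains or approaches the reverse-Markov bound.
\item When the bound is trivial ($0$ or $1$), a point mass at $m_\nu$ or a $\{0,1\}$ law witnesses it.
\end{itemize}

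The main subtlety is interpreting ``given only $m_\nu$ and $M$''. Sharpness refers to the class of $[0,1]$-valued laws with mean $m_\nu$, since $M$ already normalizes the scale. The constraint $\max Y=1$ must also be respected, so I would add an atom at $1$ of vanishing mass, perturbing the other masses to keep the mean fixed. This turns the extremal examples into ones that are approached rather than attained. That is exactly the weak-versus-strict caveat in the statement, and this bookkeeping is the step I expect to need the most care.
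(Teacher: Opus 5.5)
Your proposal is correct and follows essentially the same route as the paper: Markov's inequality for the upper tail, the averaging (reverse-Markov) bound using $Y\le 1$ for the lower tail, and two-point extremal laws for sharpness. For the complementary bounds, the paper first proves the strict-threshold bound $\nu(Y>\tau)\ge(m_\nu-\tau)/(1-\tau)$ and then complements; this is equivalent to your Markov-on-$1-Y$ step. One small simplification: since $\nu$ is arbitrary, it may put no mass on the state attaining $M$. So the vanishing atom at $1$ is unnecessary, the paper argues sharpness only over $[0,1]$-valued laws with mean $m_\nu$, and the Markov bound is genuinely attained.
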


\begin{proof}
Define $Y(x):=r(x)/M \in [0,1]$ for every $x\in S$.

\medskip
\noindent\textbf{Upper bound on the upper tail.}
Markov's inequality gives
$\nu(Y\ge \tau)\le m_\nu/\tau$.
Since probabilities are at most $1$,
$\nu(Y\ge \tau)\le \min\{1,\,m_\nu/\tau\}$.

\medskip
\noindent\textbf{Lower bound on the upper tail.}
Decomposing the expectation:
\[
  m_\nu = \mathbb{E}_\nu[Y]
  = \mathbb{E}_\nu[Y\mathbf{1}_{\{Y\le \tau\}}]
  + \mathbb{E}_\nu[Y\mathbf{1}_{\{Y>\tau\}}]
  \le \tau\,\nu(Y\le \tau) + \nu(Y>\tau).
\]
Since $\nu(Y\le \tau) = 1 - \nu(Y > \tau)$, we have:
\[
  m_\nu \le \tau(1-\nu(Y>\tau)) + \nu(Y>\tau)
  = \tau + (1-\tau)\nu(Y>\tau).
\]
Rearranging terms implies $\nu(Y>\tau) \ge (m_\nu - \tau)/(1-\tau)$. Since $\nu(Y\ge\tau)\ge\nu(Y>\tau)$ and probabilities are non-negative,
\[
  \nu(Y\ge\tau) \ge \max\left\{0, \frac{m_\nu-\tau}{1-\tau}\right\}.
\]

\medskip
\noindent\textbf{Complementary tail.}
The lower-tail bounds follow from the identity
$\nu(Y\le\tau) = 1 - \nu(Y>\tau)$ applied to the above inequalities.

\medskip
\noindent\textbf{Optimality.}
The Markov upper bound is attained when $m_\nu\le\tau$, by putting mass
$m_\nu/\tau$ at $\tau$ and the remaining mass at~$0$; when $m_\nu>\tau$ it
is trivially equal to~$1$. The lower bound on the upper tail is approached,
when $m_\nu>\tau$, by putting mass $(m_\nu-\tau)/(1-\tau)$ at~$1$ and the
remaining mass just below~$\tau$; when $m_\nu\le\tau$, it is attained by
laws supported below~$\tau$. Applying the same constructions to $1-Y$ gives
the complementary lower-tail statements. Hence, the displayed constants cannot
be improved using only the mean and the support constraint $Y\in[0,1]$.
\end{proof}

\begin{corollary}[Stationary specialization]
\label{cor:ratio-characterization-stationary}
If $M>0$ and $\nu=\pi$, then $m_\pi=L(P)$, and for every $\tau\in(0,1)$,
\[
  \max\!\left\{0,\frac{L(P)-\tau}{1-\tau}\right\}
  \le
  \pi\!\left(\{x\in S:\, r(x)\ge \tau M\}\right)
  \le
  \min\!\left\{1,\frac{L(P)}{\tau}\right\}.
\]
\end{corollary}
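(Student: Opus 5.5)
This corollary follows directly from Theorem~\ref{thm:ratio-characterization-general} with $\nu=\pi$. I will verify the identification $m_\pi=L(P)$ and then substitute it into the upper-tail display of that theorem.

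First, compute $m_\pi$. By the definition in Theorem~\ref{thm:ratio-characterization-general},
\[
  m_\pi=\sum_{x\in S}\pi(x)\,\frac{r(x)}{M}=\frac{1}{M}\,\E_{X\sim\pi}[r(X)]=\frac{\bar r_\pi}{M}.
\]
By Definition~\ref{def:rho}, this is $L(P)$. The hypothesis $M>0$ matters here: it ensures $L(P)=\bar r_\pi/M$ is given by the quotient formula rather than by the degenerate convention $L(P)=1$ for $M=0$. It is also exactly the standing assumption of Theorem~\ref{thm:ratio-characterization-general}, so that theorem applies.

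Second, apply Theorem~\ref{thm:ratio-characterization-general} with $\nu=\pi$. It is stated for an arbitrary probability law $\nu$ on $S$, so $\pi$ qualifies. For each $\tau\in(0,1)$ it gives
\[
  \max\{0,(m_\pi-\tau)/(1-\tau)\}\le\pi(\{r\ge\tau M\})\le\min\{1,m_\pi/\tau\}.
\]
Replacing $m_\pi$ by $L(P)$ yields the displayed inequality. Sharpness, if one wants to record it, carries over verbatim from the same theorem, since the extremal two-point laws there depend only on the mean and the constraint $Y\in[0,1]$.

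There is no genuine obstacle. The only point needing care is the $M>0$ caveat in the first step, which keeps the quotient $L(P)=\bar r_\pi/M$ well defined and guarantees that the convention for $M=0$ never enters.
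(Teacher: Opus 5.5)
Your argument is correct and matches the paper's proof: specialize Theorem~\ref{thm:ratio-characterization-general} to $\nu=\pi$ and use $m_\pi=\bar r_\pi/M=L(P)$, which is valid because $M>0$. One minor caveat concerns your optional sharpness aside, which is not part of the statement: it does not carry over quite verbatim, because under the full-support $\pi$ the state attaining $M$ always carries positive mass, so the extremal two-point laws can only be approached, not attained.
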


\begin{proof}
Immediate from Theorem~\ref{thm:ratio-characterization-general} with
$\nu=\pi$ and $m_\pi = \bar r_\pi / M = L(P)$.
\end{proof}

\begin{remark}[Transient version]
\label{rmk:transient}
If $M>0$ and $X_t\sim \mu P^t$ for some initial distribution $\mu$, the same bounds
hold with $m_\nu$ replaced by
$m_t:=\sum_{x}(\mu P^t)(x)\,r(x)/M$.
If the chain is ergodic, then $\mu P^t \to \pi$ and hence
$m_t \to L(P)$ as $t\to\infty$.
\end{remark}

\begin{remark}[Nature of the bounds]
The upper tail bound is Markov's inequality; the lower tail bound is its
complementary averaging counterpart. The contribution lies in the
application: these bounds provide tight control of the $\pi$-mass of
bottleneck states from only two summary statistics ($L(P)$ and $M$),
without requiring spectral decomposition or explicit knowledge of $r(x)$
at each state.
\end{remark}

\subsection{Uniform stationary distribution and mutual information}

When $\pi$ is uniform, $\log(1/\pi(x))=\log n$ is constant, so
\[
  \bar r_\pi
  =
  \frac{1}{\log n}\sum_{x\in S}\pi(x)\KL(P(x,\cdot)\|\pi).
\]

\begin{definition}[Mutual information for the stationary one-step pair]
\label{def:mi}
Let $(X_0,X_1)$ be defined by
$X_0\sim\pi$ and $X_1\mid X_0=x\sim P(x,\cdot)$.
The mutual information is
\[
  \MI(X_0;X_1)
  :=
  \sum_{x,y\in S}\pi(x)P(x,y)\log
  \frac{P(x,y)}{\pi(y)}.
\]
\end{definition}

\begin{proposition}[Uniform $\pi$: $\bar r_\pi$ equals normalized mutual information]
\label{prop:uniform-mi}
Assume $\pi$ is uniform on $S$ with $|S|=n$. Then
$\bar r_\pi=\MI(X_0;X_1)/\log n$.
\end{proposition}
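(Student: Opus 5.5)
The identity follows by direct substitution into the definitions; no earlier result beyond the definitions is needed. I will start from the expression for $\bar r_\pi$ under uniform $\pi$ recorded just before Definition~\ref{def:mi}. For uniform $\pi$ we have $\log(1/\pi(x))=\log n$ for every $x$, so $r(x)=\KL(P(x,\cdot)\|\pi)/\log n$. Hence
\[
  \bar r_\pi=\sum_{x\in S}\pi(x)\,r(x)=\frac{1}{\log n}\sum_{x\in S}\pi(x)\,\KL(P(x,\cdot)\|\pi).
\]
The point is that the normalizing denominator in $r(x)$ is constant, so it factors out of the $\pi$-average.

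Next I expand each row divergence, $\KL(P(x,\cdot)\|\pi)=\sum_{y}P(x,y)\log\bigl(P(x,y)/\pi(y)\bigr)$. This turns the weighted sum into the double sum
\[
  \sum_{x,y\in S}\pi(x)P(x,y)\log\frac{P(x,y)}{\pi(y)},
\]
which is exactly $\MI(X_0;X_1)$ as written in Definition~\ref{def:mi}.

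To justify calling this quantity a mutual information, I will note one further fact. Under $X_0\sim\pi$, stationarity gives the marginal $X_1\sim\pi P=\pi$. Therefore $P(x,y)/\pi(y)$ equals the joint-over-product ratio $\pi(x)P(x,y)/(\pi(x)\pi(y))$, and the displayed sum is the genuine mutual information. The convention $0\log 0=0$ takes care of the terms with $P(x,y)=0$.

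There is no real obstacle here. The only points needing care are two bookkeeping checks: that stationarity makes the $X_1$-marginal equal to $\pi$, and that the constancy of $\log(1/\pi(x))$ is exactly what lets the normalization factor out. Dividing the double sum by $\log n$ then gives $\bar r_\pi=\MI(X_0;X_1)/\log n$.
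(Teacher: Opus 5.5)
Your proposal is correct and follows essentially the same route as the paper: you expand the row divergences to identify $\sum_x\pi(x)\KL(P(x,\cdot)\|\pi)$ with $\MI(X_0;X_1)$, and you factor out the constant $\log(1/\pi(x))=\log n$. Your extra stationarity remark, that the $X_1$-marginal is $\pi P=\pi$ so the definition is a genuine mutual information, is a correct aside, but the paper's proof does not need it because Definition~\ref{def:mi} is simply the double sum.
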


\begin{proof}
By definition,
\[
  \MI(X_0;X_1)
  =\sum_{x,y}\pi(x)P(x,y)\log\frac{P(x,y)}{\pi(y)}
  =\sum_x \pi(x)\KL(P(x,\cdot)\|\pi).
\]
If $\pi$ is uniform:
\[
  \bar r_\pi=\sum_x \pi(x)\frac{\KL(P(x,\cdot)\|\pi)}{\log n}
  =\frac{\MI(X_0;X_1)}{\log n}.
\]
\end{proof}

\begin{remark}
Proposition~\ref{prop:uniform-mi} is the equilibrium identity.
It does not yield bounds on the worst-case coefficient $\etaKL(P)$, which
involves a supremum over all initial distributions.
\end{remark}

\section{Variance--Retention Bound}
\label{sec:variance}

The localization ratio also governs the dispersion of the retention profile
under~$\pi$.

\begin{proposition}[Sharp variance bound]
\label{prop:variance}
For any $P$ with stationary distribution $\pi$,
\[
  \mathrm{Var}_\pi(r)
  :=
  \sum_{x\in S}\pi(x)\bigl(r(x)-\bar r_\pi\bigr)^2
  \;\le\;
  \bar r_\pi\,M\,(1-L(P)).
\]
This bound is sharp: equality holds for any chain in which $r(x)\in\{0,M\}$
for $\pi$-a.e.\ $x$.
\end{proposition}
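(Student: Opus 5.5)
This is the Bhatia--Davis inequality applied to the random variable $r(X)$ with $X\sim\pi$, which takes values in $[0,M]$; the definition of $r$ and the bound $0\le r(x)\le 1$ from Definition~\ref{def:r} give the range. The plan is to rewrite the right-hand side, establish a pointwise inequality, and integrate it against $\pi$.

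First, observe that when $M>0$,
\[
  \bar r_\pi M(1-L(P))=\bar r_\pi M-\bar r_\pi^2=\bar r_\pi(M-\bar r_\pi),
\]
since $L(P)=\bar r_\pi/M$. In the degenerate case $M=0$ we have $r\equiv0$ (because $0\le r\le M$), so both sides vanish and the convention $L(P)=1$ is harmless. The claim therefore reduces to
\[
  \mathrm{Var}_\pi(r)\le \bar r_\pi(M-\bar r_\pi).
\]

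Second, use the pointwise inequality $(M-r(x))\,r(x)\ge0$, which holds because $0\le r(x)\le M$ for every $x$. Taking the $\pi$-expectation gives $\E_\pi[r^2]\le M\,\E_\pi[r]=M\bar r_\pi$. Subtracting $\bar r_\pi^2$ then yields $\mathrm{Var}_\pi(r)=\E_\pi[r^2]-\bar r_\pi^2\le M\bar r_\pi-\bar r_\pi^2$, which is the desired bound.

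For sharpness, note that equality in the integrated inequality holds exactly when $(M-r(x))r(x)=0$ for $\pi$-a.e.\ $x$, that is, when $r(x)\in\{0,M\}$ for $\pi$-a.e.\ $x$. The statement only asks for sufficiency, and that follows immediately. It is worth remarking that since $\pi$ has full support, "$\pi$-a.e." in fact means "everywhere."

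No step is a real obstacle. The only care needed is to handle the convention $M=0$ and to make sure the factorization $\bar r_\pi M(1-L)=\bar r_\pi(M-\bar r_\pi)$ is valid, which requires $M>0$.
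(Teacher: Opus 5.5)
Your proof is correct and is essentially the paper's argument: the pointwise bound $r(x)^2\le M\,r(x)$, averaged over $\pi$, followed by subtracting $\bar r_\pi^2$. The equality characterization via $(M-r)r=0$ $\pi$-a.e.\ and your explicit treatment of the $M=0$ convention are small additions to the paper's direct computation with a $\{0,M\}$-valued profile.
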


\begin{proof}
Because $0\le r(x)\le M$ for all $x$, pointwise $r(x)^2\le M\cdot r(x)$.
Averaging over~$\pi$:
$\E_\pi[r^2]\le M\,\bar r_\pi$.
Therefore,
\[
  \mathrm{Var}_\pi(r)
  =\E_\pi[r^2]-\bar r_\pi^2
  \le M\,\bar r_\pi-\bar r_\pi^2
  =\bar r_\pi\,M\,(1-L(P)).
\]
For sharpness, suppose $r(x)\in\{0,M\}$ with $\pi(\{x:r(x)=M\})=p$.
Then $\bar r_\pi=pM$, $L(P)=p$, and
$\mathrm{Var}_\pi(r)=p(1-p)M^2=\bar r_\pi\cdot M\cdot(1-L(P))$,
achieving equality.
\end{proof}

\begin{remark}[Relation to the Bhatia--Davis inequality]
Proposition~\ref{prop:variance} is an instance of the
Bhatia--Davis inequality~\cite{BhatiaDavis2000}: for a random variable
$X\in[a,b]$, $\mathrm{Var}(X)\le (b-\E[X])(\E[X]-a)$. Applied with
$a=0$, $b=M$, $\E_\pi[r]=\bar r_\pi$, it yields the stated bound.
The interpretation is that $L(P)\approx 1$ forces
$\mathrm{Var}_\pi(r)\approx 0$, i.e., a global obstruction implies
approximate spatial homogeneity of the entropy retention profile.
\end{remark}

\begin{corollary}[Concentration of the retention profile]
\label{cor:concentration}
For every $\varepsilon>0$,
\[
  \pi\bigl(\{x\in S:|r(x)-\bar r_\pi|>\varepsilon\}\bigr)
  \;\le\;
  \frac{\bar r_\pi\,M\,(1-L(P))}{\varepsilon^2}.
\]
In particular, as $L(P)\to 1$, the profile $r(\cdot)$ converges to the
constant $\bar r_\pi$ in $L^2(\pi)$.
\end{corollary}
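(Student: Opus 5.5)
The plan is to apply Chebyshev's inequality to the random variable $r(X)$ with $X\sim\pi$, and then feed in the variance bound from Proposition~\ref{prop:variance}. Since $S$ is finite and $r$ is a bounded function, $r(X)$ has finite mean $\bar r_\pi$ and finite variance $\mathrm{Var}_\pi(r)$. Chebyshev then gives, for every $\varepsilon>0$,
\[
  \pi\bigl(\{x:|r(x)-\bar r_\pi|>\varepsilon\}\bigr)
  \le \pi\bigl(\{x:|r(x)-\bar r_\pi|\ge\varepsilon\}\bigr)
  \le \frac{\mathrm{Var}_\pi(r)}{\varepsilon^2}.
\]
Substituting the bound $\mathrm{Var}_\pi(r)\le \bar r_\pi M(1-L(P))$ from Proposition~\ref{prop:variance} yields the displayed inequality.

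\emph{Degenerate case.} When $M=0$ we have $r\equiv 0$ and $\bar r_\pi=0$. The left-hand side is then $0$ and the right-hand side is $0$ under the convention $L(P)=1$, so the inequality holds trivially. Equivalently, one may note that the product $\bar r_\pi M(1-L(P))=M\bar r_\pi-\bar r_\pi^2$ is well defined even in this case.

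\emph{The ``in particular'' clause.} The $L^2(\pi)$ distance between $r$ and the constant $\bar r_\pi$ is exactly $\mathrm{Var}_\pi(r)^{1/2}$, so
\[
  \|r-\bar r_\pi\|_{L^2(\pi)}^2 = \mathrm{Var}_\pi(r) \le \bar r_\pi M(1-L(P)).
\]
To make this uniform across chains, I bound $\bar r_\pi M\le M^2\le 1$, using $0\le r\le 1$ from Definition~\ref{def:r}. This gives $\|r-\bar r_\pi\|_{L^2(\pi)}^2\le 1-L(P)$. Hence along any sequence of chains with $L(P)\to1$, the profile converges to its mean in $L^2(\pi)$ at rate $O(\sqrt{1-L(P)})$. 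In the same regime, the probability bound above shows that for each fixed $\varepsilon$ the deviation probability is at most $(1-L(P))/\varepsilon^2$, which tends to $0$.

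There is no genuine obstacle here. The only points needing care are interpreting ``converges'' correctly, either as a limit along a family of chains or via the quantitative bound $\|r-\bar r_\pi\|_{L^2(\pi)}^2\le 1-L(P)$, and handling the $M=0$ convention.
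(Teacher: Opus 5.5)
Your proposal is correct and matches the paper's proof, which is simply Chebyshev's inequality combined with the variance bound of Proposition~\ref{prop:variance}. Your additional step $\bar r_\pi M\le M^2\le 1$ is correct. It gives $\|r-\bar r_\pi\|_{L^2(\pi)}^2\le 1-L(P)$, which makes the ``in particular'' clause quantitative and uniform across chains; the paper leaves this implicit.
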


\begin{proof}
Apply Chebyshev's inequality to the bound in Proposition~\ref{prop:variance}.
\end{proof}

\section{Spectral Connections}
\label{sec:spectral}

Throughout this section, $P$ is \emph{reversible} with respect to~$\pi$.

\subsection{Near-stationary lower bound}

The row-based lower bound uses point masses. A second, independent source of
large KL contraction comes from distributions infinitesimally close to
stationarity. This observation is useful both analytically and as a numerical
calibration, because many examples in the empirical section are controlled by
this local quadratic regime rather than by boundary point masses.

\begin{proposition}[Local quadratic lower bound]
\label{prop:local-l2-lower}
Let $P$ be reversible with respect to $\pi$. Then
\[
  \etaKL(P)\ge \max_{i\ge 2}|\lambda_i|^2.
\]
More precisely, if $0\neq f\in L^2(\pi)$ satisfies $\E_\pi f=0$, then for all
sufficiently small $\varepsilon$ such that $1+\varepsilon f>0$, the distribution
$d\mu_\varepsilon/d\pi=1+\varepsilon f$ satisfies
\[
  \lim_{\varepsilon\to0}
  \frac{\KL(\mu_\varepsilon P\|\pi)}{\KL(\mu_\varepsilon\|\pi)}
  =
  \frac{\|Pf\|_{L^2(\pi)}^2}{\|f\|_{L^2(\pi)}^2}.
\]
\end{proposition}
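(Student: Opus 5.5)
The plan is a second-order Taylor expansion of both divergences around $\pi$, followed by a choice of $f$ equal to an eigenfunction. Fix $f$ with $\E_\pi f=0$ and $f\neq0$. Since $S$ is finite, $\|f\|_\infty<\infty$, so $1+\varepsilon f>0$ for $|\varepsilon|<1/\|f\|_\infty$. In that range $\mu_\varepsilon$ is a genuine probability measure, because $\E_\pi(1+\varepsilon f)=1$.

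\textbf{Step 1 (transport of densities).} By reversibility, exactly as in the proof of Remark~\ref{rmk:mlsi-equivalence}, the $\pi$-density of $\mu_\varepsilon P$ is $P(1+\varepsilon f)=1+\varepsilon Pf$, using $P\mathbf 1=\mathbf 1$. Moreover $\E_\pi Pf=\E_\pi f=0$ by stationarity.

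\textbf{Step 2 (expansion).} For any $g$ with $\E_\pi g=0$ and $\|\varepsilon g\|_\infty\le1/2$, expand $\phi(u)=(1+u)\log(1+u)=u+u^2/2+O(|u|^3)$, with a uniform constant on $|u|\le 1/2$. Taking $\pi$-expectations, the linear term vanishes, so
\[
\KL(\mu\|\pi)=\E_\pi\phi(\varepsilon g)=\tfrac{\varepsilon^2}{2}\|g\|_{L^2(\pi)}^2+O(\varepsilon^3\|g\|_\infty^3).
\]
Apply this with $g=f$ and with $g=Pf$. Note that $\|Pf\|_\infty\le\|f\|_\infty$ because $P$ is a Markov operator.

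\textbf{Step 3 (the ratio).} The denominator is $\tfrac{\varepsilon^2}{2}\|f\|^2(1+O(\varepsilon))$ and $\|f\|>0$, so dividing gives
\[
\frac{\KL(\mu_\varepsilon P\|\pi)}{\KL(\mu_\varepsilon\|\pi)}=\frac{\|Pf\|^2+O(\varepsilon)}{\|f\|^2+O(\varepsilon)}\to\frac{\|Pf\|^2}{\|f\|^2}.
\]
If $Pf=0$ the limit is $0$, and the expansion still covers this case since the numerator is $O(\varepsilon^3)$.

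\textbf{Step 4 (spectral bound).} Each $\mu_\varepsilon\neq\pi$ is admissible in the supremum, so $\etaKL(P)\ge\|Pf\|^2/\|f\|^2$ for every mean-zero $f\neq0$. Choose $f=\phi_i$ for $i\ge2$; it is orthogonal to $\phi_1\equiv1$, hence mean zero. Then $\|P\phi_i\|^2=\lambda_i^2$, and maximizing over $i$ gives the claim.

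The only real obstacle is keeping the Taylor remainder uniform, so that the $O(\varepsilon^3)$ terms are negligible relative to $\varepsilon^2\|f\|^2$. This is immediate here because the state space is finite and the supports are bounded.
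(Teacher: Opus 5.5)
Your proof is correct and follows essentially the same route as the paper: use reversibility to carry the density $1+\varepsilon f$ to $1+\varepsilon Pf$, expand $(1+u)\log(1+u)$ to second order in both divergences (using the mean-zero property and boundedness on the finite state space), divide, and then take $f=\phi_i$. Your explicit uniform remainder on $|u|\le 1/2$, together with $\|Pf\|_\infty\le\|f\|_\infty$, just makes the paper's $o(\varepsilon^2)$ bookkeeping quantitative.
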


\begin{proof}
For $\E_\pi f=0$ and $d\mu_\varepsilon/d\pi=1+\varepsilon f$, expand
$(1+u)\log(1+u)=u+\tfrac{u^2}{2}+O(u^3)$ at $u=\varepsilon f(x)$ and
average with respect to $\pi$. Using $\sum_x\pi(x)f(x)=0$ and the
boundedness of $f$ on the finite state space $S$,
\begin{align*}
  \KL(\mu_\varepsilon\|\pi)
  &= \sum_{x\in S}\pi(x)\,(1+\varepsilon f(x))\log(1+\varepsilon f(x)) \\
  &= \sum_{x\in S}\pi(x)\Bigl[\varepsilon f(x) + \tfrac{\varepsilon^2}{2}f(x)^2
     + O(\varepsilon^3 f(x)^3)\Bigr] \\
  &= \frac{\varepsilon^2}{2}\|f\|_{L^2(\pi)}^2 + o(\varepsilon^2).
\end{align*}
By reversibility,
\[
  \sum_x \pi(x)f(x)P(x,y)
  = \pi(y)\sum_x P(y,x)f(x)
  = \pi(y)(Pf)(y),
\]
so $(\mu_\varepsilon P)(y)=\pi(y)(1+\varepsilon (Pf)(y))$ and the density
of $\mu_\varepsilon P$ with respect to $\pi$ is $1+\varepsilon Pf$.
Moreover $\sum_x\pi(x)(Pf)(x)=\sum_x\pi(x)f(x)=0$ (stationarity), so the
same expansion gives
\[
  \KL(\mu_\varepsilon P\|\pi)
  =\frac{\varepsilon^2}{2}\|Pf\|_{L^2(\pi)}^2+o(\varepsilon^2).
\]
Dividing yields the displayed limit. Taking $f$ to be an eigenfunction
corresponding to a nontrivial eigenvalue of the largest absolute value gives
$\etaKL(P)\ge\max_{i\ge2}|\lambda_i|^2$. The bound
$\etaKL(P)\ge\max_{i\ge2}\lambda_i^2$ is classical and corresponds to the
second-order ($\chi^2$) lower bound on KL contraction; see
Polyanskiy--Wu~\cite[Ch.~33]{PolyanskiyWu2025}.
\end{proof}

\subsection{Spectral upper bound}

\begin{lemma}[$\chi^2$--spectral identity]
\label{lem:chi2}
For each $x\in S$,
\[
  \chi^2(P(x,\cdot)\|\pi)
  :=\sum_{y\in S}\frac{(P(x,y)-\pi(y))^2}{\pi(y)}
  =\sum_{i=2}^n\lambda_i^2\,\phi_i(x)^2.
\]
Consequently,
$\sum_{x\in S}\pi(x)\,\chi^2(P(x,\cdot)\|\pi)=\sum_{i=2}^n\lambda_i^2$.
\end{lemma}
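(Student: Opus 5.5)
The plan is to use the spectral decomposition of $P$ in $L^2(\pi)$ and read off $\chi^2(P(x,\cdot)\|\pi)$ as a squared norm. Throughout we use reversibility, as assumed for this section. Rewrite the divergence as
\[
  \chi^2(P(x,\cdot)\|\pi)=\sum_y \pi(y)\Bigl(\frac{P(x,y)}{\pi(y)}-1\Bigr)^2
  =\Bigl\|\,h_x-1\Bigr\|_{L^2(\pi)}^2,
  \qquad h_x(y):=\frac{P(x,y)}{\pi(y)}.
\]
The task is therefore to expand the density $h_x$ of $\delta_xP$ with respect to $\pi$ in the orthonormal eigenbasis $\{\phi_i\}$.

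The key step is computing the coefficients $\langle h_x,\phi_i\rangle_\pi=\sum_y P(x,y)\phi_i(y)=(P\phi_i)(x)=\lambda_i\phi_i(x)$. This uses only that $\phi_i$ is an eigenfunction. Alternatively, by reversibility $h_x(y)=P(y,x)/\pi(x)$, and one can expand the kernel as
\[
  \frac{P(x,y)}{\pi(y)}=\sum_{i}\lambda_i\phi_i(x)\phi_i(y),
\]
which is the standard spectral representation of a reversible kernel. Since $\phi_1\equiv1$ and $\lambda_1=1$, the $i=1$ coefficient equals $1$. Subtracting the constant $1$ therefore removes exactly the $i=1$ term, and Parseval in $L^2(\pi)$ (valid because $\{\phi_i\}$ is an orthonormal basis of the $n$-dimensional space) gives $\|h_x-1\|^2_{L^2(\pi)}=\sum_{i\ge2}\lambda_i^2\phi_i(x)^2$.

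For the averaged statement, multiply by $\pi(x)$ and sum. Exchanging the finite sums and using the normalization $\sum_x\pi(x)\phi_i(x)^2=\|\phi_i\|_{L^2(\pi)}^2=1$ yields $\sum_{i\ge2}\lambda_i^2$.

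There is no serious obstacle here. The only point needing care is the first step: one must justify that $h_x\in L^2(\pi)$ has the stated coefficients, and that orthonormality is taken in $L^2(\pi)$ rather than in the Euclidean inner product. This is where reversibility, i.e.\ self-adjointness of $P$ on $L^2(\pi)$, guarantees a real orthonormal eigenbasis.
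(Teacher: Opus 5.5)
Your proof is correct and follows essentially the paper's route: expand $P(x,\cdot)/\pi$ in the $L^2(\pi)$ eigenbasis, note that subtracting $1$ removes exactly the $i=1$ term, apply orthonormality (Parseval), and then average using $\|\phi_i\|_{L^2(\pi)}=1$. Your direct computation $\langle h_x,\phi_i\rangle_\pi=(P\phi_i)(x)=\lambda_i\phi_i(x)$ is a small improvement, since it proves the kernel expansion $P(x,y)/\pi(y)=\sum_i\lambda_i\phi_i(x)\phi_i(y)$ that the paper simply quotes.
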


\begin{proof}
The spectral decomposition of the kernel gives
$P(x,y)/\pi(y)=\sum_{i=1}^n\lambda_i\phi_i(x)\phi_i(y)$. Hence
\begin{align*}
  \chi^2(P(x,\cdot)\|\pi)
  &=\sum_{y}\pi(y)\Bigl(\frac{P(x,y)}{\pi(y)}-1\Bigr)^{\!2} \\
  &=\sum_{y}\pi(y)\Bigl(\sum_{i\ge 2}\lambda_i\phi_i(x)\phi_i(y)\Bigr)^{\!2} \\
  &=\sum_{i\ge 2}\lambda_i^2\,\phi_i(x)^2,
\end{align*}
because of the orthonormality of $\{\phi_i\}$ in $L^2(\pi)$.
Averaging over $\pi$ and using $\sum_x\pi(x)\phi_i(x)^2=1$ gives the global
identity.
\end{proof}

\begin{theorem}[Spectral upper bound on typical retention]
\label{thm:spectral-upper}
For a reversible chain with stationary distribution $\pi$,
\[
  \MI(X_0;X_1)
  =\sum_{x\in S}\pi(x)\,\KL(P(x,\cdot)\|\pi)
  \;\le\;\sum_{i=2}^n\lambda_i^2.
\]
Consequently,
\[
  \bar r_\pi\;\le\;
  \frac{\sum_{i=2}^n\lambda_i^2}{\log(1/\pi_{\max})},
\]
where $\pi_{\max}=\max_x\pi(x)$.
If $\pi$ is uniform, then
\[
  \bar r_\pi
  \;\le\;
  \frac{\sum_{i=2}^n\lambda_i^2}{\log n}
  \;\le\;
  \frac{(n-1)(1-\gamma)^2}{\log n}.
\]
\end{theorem}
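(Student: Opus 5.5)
The plan is to bound each row divergence by the corresponding $\chi^2$ divergence and then average with Lemma~\ref{lem:chi2}.

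\emph{First inequality.} The identity $\MI(X_0;X_1)=\sum_x\pi(x)\KL(P(x,\cdot)\|\pi)$ is just the definition (Definition~\ref{def:mi}), exactly as in the proof of Proposition~\ref{prop:uniform-mi}. For each $x$, I will invoke the standard inequality $\KL(\mu\|\nu)\le\log(1+\chi^2(\mu\|\nu))\le\chi^2(\mu\|\nu)$. This follows from Jensen's inequality applied to the concave logarithm:
\[
  \KL(\mu\|\nu)=\sum_y\mu(y)\log\tfrac{\mu(y)}{\nu(y)}\le\log\sum_y\tfrac{\mu(y)^2}{\nu(y)}=\log(1+\chi^2(\mu\|\nu)),
\]
together with $\log(1+u)\le u$. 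Applying this with $\mu=P(x,\cdot)$ and $\nu=\pi$, averaging over $\pi$, and using the global identity in Lemma~\ref{lem:chi2} gives $\MI(X_0;X_1)\le\sum_{i\ge2}\lambda_i^2$.

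\emph{Bound on $\bar r_\pi$.} Since $\pi(x)\le\pi_{\max}<1$ for every $x$, we have $\log(1/\pi(x))\ge\log(1/\pi_{\max})>0$. Hence
\[
  r(x)\le \KL(P(x,\cdot)\|\pi)/\log(1/\pi_{\max})
\]
for each $x$. Averaging over $\pi$ yields $\bar r_\pi\le \MI(X_0;X_1)/\log(1/\pi_{\max})$, and combining this with the first inequality gives the claim.

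\emph{Uniform case.} When $\pi$ is uniform, $\pi_{\max}=1/n$, so $\log(1/\pi_{\max})=\log n$; alternatively one can use Proposition~\ref{prop:uniform-mi} directly. For the last step, each $i\ge2$ satisfies $|\lambda_i|\le\max_{j\ge2}|\lambda_j|=1-\gamma$, so $\sum_{i=2}^n\lambda_i^2\le(n-1)(1-\gamma)^2$.

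The only genuinely nontrivial ingredient is the $\KL\le\chi^2$ comparison. Its Jensen step is valid because $\nu=\pi$ has full support, while $P(x,\cdot)$ may have zeros; those terms are simply dropped from the sum, which only decreases the right-hand side. Everything else is bookkeeping.
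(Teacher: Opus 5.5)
Your proof is correct and matches the paper's argument: bound each row divergence by its $\chi^2$ divergence, average using Lemma~\ref{lem:chi2}, divide by $\log(1/\pi_{\max})$, and use $|\lambda_i|\le 1-\gamma$. The one cosmetic difference is how you justify $\KL\le\chi^2$ (Jensen plus $\log(1+u)\le u$, passing through the sharper $\log(1+\chi^2)$), where the paper uses the pointwise bound $\log t\le t-1$; also, the zero-mass terms you drop contribute nothing to $\sum_y\mu(y)^2/\nu(y)$, so dropping them leaves the right-hand side unchanged rather than decreasing it.
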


\begin{proof}
The standard inequality $\KL(p\|q)\le\chi^2(p\|q)$
(following from $\log t\le t-1$) applied row-wise and averaged over $\pi$
gives
\[
  \MI(X_0;X_1)
  \le\sum_x\pi(x)\chi^2(P(x,\cdot)\|\pi)
  =\sum_{i=2}^n\lambda_i^2,
\]
by Lemma~\ref{lem:chi2}. Since
$\log(1/\pi(x))\ge\log(1/\pi_{\max})$ for every $x$,
\[
  \bar r_\pi
  =\sum_x\pi(x)\frac{\KL(P(x,\cdot)\|\pi)}{\log(1/\pi(x))}
  \le\frac{\sum_x\pi(x)\KL(P(x,\cdot)\|\pi)}{\log(1/\pi_{\max})}
  \le\frac{\sum_{i=2}^n\lambda_i^2}{\log(1/\pi_{\max})}.
\]
The uniform case follows from $\pi_{\max}=1/n$ and
$\sum_{i\ge 2}\lambda_i^2\le(n-1)(1-\gamma)^2$.
\end{proof}

\subsection{Conditional spectral lower bound via second-order expansion}

The upper bound of Theorem~\ref{thm:spectral-upper} arises from $\KL \le \chi^2$.
We now establish a conditional converse, valid when transition rows are
sufficiently close to~$\pi$ (bounded row regime), by quantifying the gap via
Taylor expansion with an explicit third-moment correction. The lower bound
is most useful for chains with bounded row entries $P(x,y)/\pi(y)$; for
sparse chains or those with near-absorbing states, the correction term may
dominate (see Corollary~\ref{cor:spectral-sandwich} and Remark~\ref{rmk:spectral-caveat} for the precise conditions).

\begin{theorem}[Spectral lower bound on mutual information]
\label{thm:spectral-lower}
For a reversible chain with stationary distribution $\pi$, define
\[
  g(x,y) := \frac{P(x,y)}{\pi(y)} - 1,
  \qquad
  T_3(x) := \sum_{y\in S}\pi(y)\,|g(x,y)|^3.
\]
Then
\[
  \MI(X_0;X_1)
  \;\ge\;
  \frac{1}{2}\sum_{i=2}^n\lambda_i^2
  \;-\;
  \frac{1}{6}\sum_{x\in S}\pi(x)\,T_3(x).
\]
If $\pi$ is uniform, then
\[
  \bar r_\pi
  \;\ge\;
  \frac{1}{2\log n}\sum_{i=2}^n\lambda_i^2
  \;-\;
  \frac{1}{6\log n}\sum_{x\in S}\pi(x)\,T_3(x).
\]
\end{theorem}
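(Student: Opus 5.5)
The plan is to write each row divergence in terms of the centred density $g(x,\cdot)$ and Taylor-expand $\phi(u)=(1+u)\log(1+u)$ around $u=0$ with an explicit cubic remainder. Since $P(x,y)=\pi(y)(1+g(x,y))$ and $\sum_y\pi(y)g(x,y)=0$, we have
\[
  \KL(P(x,\cdot)\|\pi)=\sum_{y}\pi(y)\,\phi\bigl(g(x,y)\bigr),
  \qquad g(x,y)\ge -1 .
\]
The linear term of $\phi$ (namely $u$) integrates to zero against $\pi$. So the whole statement reduces to a pointwise scalar inequality of the form $\phi(u)\ge u+\tfrac12u^2-\tfrac16|u|^3$ for all $u\ge-1$ (with $\phi(-1)=0$).

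For the scalar inequality, set $\psi(u):=\phi(u)-u-\tfrac12u^2+\tfrac16|u|^3$. We have $\phi'(u)=1+\log(1+u)$, $\phi''(u)=1/(1+u)$ and $\phi'''(u)=-1/(1+u)^2$, so $\psi(0)=\psi'(0)=0$. The two signs of $u$ are handled separately.
\begin{itemize}
\item \emph{Case $u\ge 0$.} Here $\psi''(u)=\frac{1}{1+u}-1+u=\frac{u^2}{1+u}\ge 0$. Hence $\psi$ is convex with a critical point at $0$, and $\psi\ge0$.
\item \emph{Case $-1\le u<0$.} Here $\psi''(u)=\frac{1}{1+u}-1-u=\frac{1-(1+u)^2}{1+u}\ge0$, since $0<1+u\le 1$. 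Hence $\psi\ge0$ again. At the endpoint $u=-1$ the value $\psi(-1)=0+1-\tfrac12+\tfrac16>0$ agrees with continuity.
\end{itemize}
This step is where care is needed: the naive Lagrange remainder $\phi'''(\xi)u^3/6$ is unbounded as $\xi\to-1$. The sign-split convexity argument avoids that issue.

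Summing the scalar inequality against $\pi(y)$ gives, for each $x$,
\[
  \KL(P(x,\cdot)\|\pi)\ge \tfrac12\,\chi^2(P(x,\cdot)\|\pi)-\tfrac16\,T_3(x).
\]
Averaging over $\pi(x)$ and applying Lemma~\ref{lem:chi2} turns the $\chi^2$ average into $\sum_{i\ge2}\lambda_i^2$; this is the only place reversibility enters. The identity $\MI(X_0;X_1)=\sum_x\pi(x)\KL(P(x,\cdot)\|\pi)$ from Theorem~\ref{thm:spectral-upper} then yields the first display. In the uniform case, Proposition~\ref{prop:uniform-mi} gives $\bar r_\pi=\MI(X_0;X_1)/\log n$, and dividing by $\log n$ finishes the proof.

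Incidentally, the convexity argument gives more than stated: it shows $\phi(u)\ge u+\tfrac12u^2-\tfrac16u^3$ for $u\ge0$ and a nonnegative quadratic-plus-correction bound on the negative side. The statement only needs the weaker $|u|^3$ form.
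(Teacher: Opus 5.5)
Your proof is correct and follows the paper's route: write each row divergence as $\sum_y\pi(y)\varphi(g(x,y))$ with $\varphi(u)=(1+u)\log(1+u)$, bound the cubic Taylor remainder by a second-derivative convexity argument, discard the linear term, and average via Lemma~\ref{lem:chi2}. The paper differs only in taking $h(u)=\varphi(u)-u-\tfrac12u^2+\tfrac16u^3$, whose second derivative $u^2/(1+u)$ is nonnegative on all of $(-1,\infty)$; this needs no sign split and yields the stronger $R(u)\ge -u^3/6$, a positive correction for $u<0$. Your closing remark therefore has it backwards, since your $|u|^3$ version gives exactly the stated bound on the negative side; on the other hand, your explicit check of the endpoint $u=-1$ (i.e.\ $P(x,y)=0$) covers a case the paper leaves to continuity.
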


\begin{proof}
Consider the Taylor expansion of $\varphi(u) = (1+u)\log(1+u)$ around $u=0$:
\[
  (1+u)\log(1+u) = u + \frac{u^2}{2} + R(u),
\]
where the remainder satisfies
$R(u) \ge -|u|^3/6$ for all $u > -1$
(see Lemma~\ref{lem:taylor-remainder} in Appendix~\ref{app:taylor}).

Now write
\begin{align*}
  \KL(P(x,\cdot)\|\pi)
  &= \sum_{y}\pi(y)\,(1+g(x,y))\log(1+g(x,y)) \\
  &= \sum_y \pi(y)\Bigl[g(x,y) + \frac{g(x,y)^2}{2} + R(g(x,y))\Bigr].
\end{align*}
Since $\sum_y \pi(y)\,g(x,y) = 0$ and
$\sum_y \pi(y)\,g(x,y)^2 = \chi^2(P(x,\cdot)\|\pi)$:
\[
  \KL(P(x,\cdot)\|\pi)
  = \frac{1}{2}\chi^2(P(x,\cdot)\|\pi) + \sum_y \pi(y)\,R(g(x,y))
  \ge \frac{1}{2}\chi^2(P(x,\cdot)\|\pi) - \frac{1}{6}T_3(x).
\]
Averaging over $\pi$ and applying Lemma~\ref{lem:chi2}:
\[
  \MI(X_0;X_1)
  \ge \frac{1}{2}\sum_{i=2}^n\lambda_i^2
  - \frac{1}{6}\sum_x \pi(x)\,T_3(x).
\]
The uniform-$\pi$ version follows by dividing by $\log n$.
\end{proof}

\begin{corollary}[Two-sided spectral bounds]
\label{cor:spectral-sandwich}
Combining Theorems~\ref{thm:spectral-upper} and~\ref{thm:spectral-lower},
for any reversible chain:
\[
  \frac{1}{2}\sum_{i=2}^n\lambda_i^2
  - \frac{1}{6}\sum_x\pi(x)\,T_3(x)
  \;\le\;
  \MI(X_0;X_1)
  \;\le\;
  \sum_{i=2}^n\lambda_i^2.
\]
When $\max_{x,y}|g(x,y)|\le C$ for a constant $C$ independent of $n$, one
has $\sum_x\pi(x)T_3(x) \le C\sum_{i\ge 2}\lambda_i^2$, so the lower bound
simplifies to
$\MI(X_0;X_1) \ge (1/2 - C/6)\sum_{i\ge 2}\lambda_i^2$,
which is positive when $C < 3$.
\end{corollary}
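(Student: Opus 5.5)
The two-sided inequality comes straight from the earlier results: the left side is Theorem~\ref{thm:spectral-lower} and the right side is Theorem~\ref{thm:spectral-upper}. Both hold for every reversible chain, so only the simplification under the bounded-ratio hypothesis $\max_{x,y}|g(x,y)|\le C$ needs a new argument.

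For that step, the plan is to compare the cubic and quadratic moments pointwise. For every $x,y$,
\[
  |g(x,y)|^3=|g(x,y)|\,g(x,y)^2\le C\,g(x,y)^2 .
\]
Multiplying by $\pi(y)$ and summing over $y$ gives
\[
  T_3(x)\le C\sum_y\pi(y)g(x,y)^2=C\,\chi^2(P(x,\cdot)\|\pi).
\]
Here I use that $\sum_y\pi(y)g(x,y)^2$ is exactly the $\chi^2$ divergence of row $x$, as already noted in the proof of Theorem~\ref{thm:spectral-lower}.

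Next, average over $x$ with weights $\pi(x)$. By Lemma~\ref{lem:chi2},
\[
  \sum_x\pi(x)\,\chi^2(P(x,\cdot)\|\pi)=\sum_{i\ge2}\lambda_i^2,
\]
and therefore
\[
  \sum_x\pi(x)T_3(x)\le C\sum_{i\ge2}\lambda_i^2 .
\]
Substituting this into the lower bound of Theorem~\ref{thm:spectral-lower} yields
\[
  \MI(X_0;X_1)\ge\Bigl(\tfrac12-\tfrac C6\Bigr)\sum_{i\ge2}\lambda_i^2 .
\]
The coefficient $\tfrac12-\tfrac C6$ is positive exactly when $C<3$. So the lower bound is positive for $C<3$ unless $\sum_{i\ge 2}\lambda_i^2=0$, in which case both sides vanish.

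I do not expect a real obstacle. The only point to check is that the hypothesis is applied to $|g|$, which ranges over $[1,\infty)$ above and is bounded by $1$ below since $g\ge-1$, so the factor-out step is legitimate. The independence of $C$ from $n$ is irrelevant to the inequality itself and only matters for its asymptotic interpretation.
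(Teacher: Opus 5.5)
Your proof is correct and follows the argument the paper leaves implicit: the two-sided bound is Theorems~\ref{thm:spectral-lower} and~\ref{thm:spectral-upper} placed side by side. The simplification uses $|g|^3\le C\,g^2$, summed against $\pi(y)$ to get $T_3(x)\le C\,\chi^2(P(x,\cdot)\|\pi)$, and then averaged over $\pi(x)$ via Lemma~\ref{lem:chi2}. Your closing remark about the range of $|g|$ is slightly garbled (the positive values of $g$ lie in $[0,\infty)$, not $[1,\infty)$), but it plays no role, since the factoring step only needs $|g(x,y)|\le C$, which is exactly the hypothesis.
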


\begin{remark}[Scope of the spectral sandwich]
\label{rmk:spectral-caveat}
The two-sided bound shows that $\MI(X_0;X_1)$ and $\sum_{i\ge 2}\lambda_i^2$
are of the same order when the third-moment term is negligible compared to
$\sum_{i\ge 2}\lambda_i^2$. For chains with near-absorbing states, the ratio
$\KL/\chi^2$ at that row can become arbitrarily small, and the lower bound
may be vacuous. For instance, for a simple random walk on a $\Delta$-regular
graph with $n$ vertices, $\pi$ is uniform and
$P(x,y)/\pi(y)\in\{0,n/\Delta\}$, so
$C=\max_{x,y}|g(x,y)|=n/\Delta-1$; the bounded-row condition $C<3$ thus
requires $\Delta>n/4$. For sparse graphs (e.g., bounded-degree graphs as
$n\to\infty$), the third-moment correction dominates, and this lower bound
becomes vacuous. We state the spectral sandwich with the explicit correction
term $T_3$ rather than asserting a universal constant-factor bound to
reflect this limitation.
\end{remark}

\subsection{Eigenvector localization}

\begin{proposition}[Eigenvector localization]
\label{prop:eigenvec}
Define the $\chi^2$-based retention proxy
\[
  \tilde r(x)
  :=\frac{\chi^2(P(x,\cdot)\|\pi)}{\log(1/\pi(x))}
  =\frac{\sum_{i=2}^n\lambda_i^2\,\phi_i(x)^2}{\log(1/\pi(x))}.
\]
Then $r(x)\le\tilde r(x)$ pointwise, $\bar r_\pi\le\E_\pi[\tilde r]$,
and $M\le\max_x\tilde r(x)$.
\end{proposition}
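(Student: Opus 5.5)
The plan is to reduce everything to a single pointwise comparison and then transfer it to the two aggregate quantities.

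First, the equality in the definition of $\tilde r(x)$ is not new work. It is exactly Lemma~\ref{lem:chi2}, which gives $\chi^2(P(x,\cdot)\|\pi)=\sum_{i\ge2}\lambda_i^2\phi_i(x)^2$ for reversible $P$, divided by the positive number $\log(1/\pi(x))$. Note that $\log(1/\pi(x))>0$ because $0<\pi(x)<1$ under the standing full-support assumption with $n\ge2$.

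Next, I will prove the pointwise inequality $r(x)\le\tilde r(x)$ using the row-wise bound $\KL(p\|q)\le\chi^2(p\|q)$, already invoked in the proof of Theorem~\ref{thm:spectral-upper}. For completeness I will recall why it holds. Apply Jensen (concavity of $\log$) to $\KL(p\|q)=\sum_y p(y)\log\frac{p(y)}{q(y)}$, which gives
\[
\KL(p\|q)\le\log\sum_y \frac{p(y)^2}{q(y)}=\log\bigl(1+\chi^2(p\|q)\bigr).
\]
Then use $\log(1+t)\le t$. Zero entries of $p=P(x,\cdot)$ contribute nothing to either side, and $q=\pi$ has full support, so no division issues arise. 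Taking $p=P(x,\cdot)$, $q=\pi$ and dividing both sides by $\log(1/\pi(x))>0$ yields $r(x)\le\tilde r(x)$ for every $x\in S$.

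The two aggregate statements then follow from monotonicity.
\begin{itemize}
\item Averaging the pointwise inequality against the nonnegative weights $\pi(x)$ gives $\bar r_\pi=\E_\pi[r]\le\E_\pi[\tilde r]$.
\item Taking the maximum over $x$ gives $M=\max_x r(x)\le\max_x\tilde r(x)$.
\end{itemize}

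There is no genuine obstacle here. The only points needing care are using the $\KL\le\chi^2$ inequality in a form that tolerates zero entries in $P(x,\cdot)$, and keeping track of the positivity of $\log(1/\pi(x))$.
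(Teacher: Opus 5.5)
Your proposal is correct and follows the paper's proof: the pointwise bound comes from applying $\KL\le\chi^2$ to each row, and the averaged and maximal versions follow by monotonicity. The only cosmetic difference is how you justify $\KL\le\chi^2$: you use Jensen followed by $\log(1+t)\le t$, which passes through the slightly sharper intermediate bound $\KL\le\log(1+\chi^2)$, whereas the paper derives it from $\log t\le t-1$.
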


\begin{proof}
The pointwise bound $r(x)\le\tilde r(x)$ follows from
$\KL(P(x,\cdot)\|\pi)\le\chi^2(P(x,\cdot)\|\pi)$.
Averaging over $\pi$ and taking the maxima yields the remaining claims.
\end{proof}

\begin{remark}[Interpretation]
The proxy $\tilde r(x)$ is large precisely when the squared eigenvectors
$\phi_i(x)^2$, weighted by $\lambda_i^2$, are concentrated at state~$x$
(spectral-graph eigenvector localization;
see~\cite{Chung1997} for background on spectral graph theory).
When eigenvectors are delocalized (as in vertex-transitive chains),
$\tilde r(x)$ is approximately constant.
The pointwise ordering $r(x)\le\tilde r(x)$ does \emph{not}
imply an ordering between $L(P)$ and
$\tilde L(P):=\E_\pi[\tilde r]/\max_x\tilde r(x)$,
since increasing both numerator and denominator need not preserve the ratio.
\end{remark}

\section{Cheeger-Type Lower Bound on \texorpdfstring{$M$}{M}}
\label{sec:cheeger}

\begin{theorem}[Bottleneck lower bound on $M$]
\label{thm:cheeger-M}
Let $A\subset S$ with $\pi(A)\le 1/2$ be a set achieving the Cheeger
constant~$h_P$, and assume $h_P \le 1 - \pi(A)$ (at equality the bound
below has zero numerator; see Remark~\ref{rmk:cheeger-balanced}).
Then there exists $x^*\in A$ with $P(x^*,A)\ge 1-h_P$, and
\[
  M
  \;\ge\;
  r(x^*)
  \;\ge\;
  \frac{(1-h_P)\log\dfrac{1-h_P}{\pi(A)}
  +h_P\log\dfrac{h_P}{1-\pi(A)}}{\log\!\bigl(1/\pi(x^*)\bigr)}.
\]
The term involving $h_P$ is interpreted by continuity as $0$ when $h_P=0$.
In particular, for small $h_P$,
$M \gtrsim \log(1/\pi(A))/\log(1/\pi_{\min})$.
\end{theorem}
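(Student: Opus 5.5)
The plan has three parts: find a state in $A$ that rarely leaves $A$, lower-bound its row divergence by collapsing to the partition $\{A,A^c\}$, and then divide by $\log(1/\pi(x^*))$.

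\textbf{Step 1: existence of $x^*$.} Since $A$ attains the Cheeger constant, we have
\[
  h_P=\sum_{x\in A}\frac{\pi(x)}{\pi(A)}\,P(x,A^c),
\]
which is the average of $P(x,A^c)$ under $\pi$ conditioned on $A$. An average is at least the minimum, so some $x^*\in A$ satisfies $P(x^*,A^c)\le h_P$. Equivalently, $P(x^*,A)\ge 1-h_P$.

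\textbf{Step 2: lower bound on the row divergence.} Apply the data-processing inequality to the deterministic map $y\mapsto\mathbf 1_{\{y\in A\}}$. This gives
\[
  \KL(P(x^*,\cdot)\|\pi)\ge d\bigl(p\,\|\,\pi(A)\bigr),\qquad p:=P(x^*,A),
\]
where $d(a\|b)=a\log\frac ab+(1-a)\log\frac{1-a}{1-b}$ is the binary divergence.

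Next we replace $p$ by $1-h_P$. For fixed $b=\pi(A)$, the map $a\mapsto d(a\|b)$ is convex, vanishes at $a=b$, and is nondecreasing on $[b,1]$. The hypothesis $h_P\le 1-\pi(A)$ says exactly that $1-h_P\ge\pi(A)$. Together with $p\ge 1-h_P$, this places both $p$ and $1-h_P$ in the increasing branch $[\pi(A),1]$. Monotonicity then gives
\[
  d(p\|\pi(A))\ge d(1-h_P\|\pi(A)).
\]
The right-hand side is precisely the numerator in the statement. The convention $0\log 0=0$ covers the case $h_P=0$. At equality $h_P=1-\pi(A)$ the numerator is $d(b\|b)=0$, as the statement notes.

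\textbf{Step 3: conclusion.} Dividing by $\log(1/\pi(x^*))>0$ gives the lower bound on $r(x^*)$, and $M\ge r(x^*)$ holds by definition of $M$.

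For the asymptotic remark, let $h_P\to0$. The numerator then tends to $\log(1/\pi(A))$, since the $h_P\log h_P$ term vanishes. The denominator is at most $\log(1/\pi_{\min})$, which yields $M\gtrsim\log(1/\pi(A))/\log(1/\pi_{\min})$.

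The main subtlety is Step 2's monotonicity argument. We must check that $p$ and $1-h_P$ both lie on the same side of $\pi(A)$, and this is exactly where the hypothesis $h_P\le1-\pi(A)$ is used. The rest is routine: the averaging in Step 1 and the coarse-graining form of data processing.
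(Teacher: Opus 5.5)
Your proposal is correct and follows the paper's proof step for step. Both arguments average $P(\cdot,A^c)$ over $\pi$ restricted to $A$ to find $x^*$, coarse-grain to the binary partition $\{A,A^c\}$ by data processing, replace $P(x^*,A)$ by $1-h_P$ using monotonicity of the binary divergence on $[\pi(A),1]$ (which is exactly where $h_P\le 1-\pi(A)$ is needed), and finally divide by $\log(1/\pi(x^*))$.
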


\begin{proof}
The argument has four steps: (i) extract a state $x^*\in A$ whose row
escapes $A$ with probability at most $h_P$; (ii) reduce
$\KL(P(x^*,\cdot)\|\pi)$ to a binary KL by the data processing
inequality; (iii) replace $P(x^*,A)$ by the Cheeger lower bound
$1-h_P$ using monotonicity of binary KL; (iv) divide by
$\log(1/\pi(x^*))$ via the definition of $r(x^*)$.

\smallskip
\emph{Step 1 (existence of $x^*$).}
Since $A$ achieves the Cheeger constant,
\[
  \sum_{x\in A}\pi(x)\,P(x,A^c)\;=\;h_P\,\pi(A).
\]
Dividing by $\pi(A)>0$ exhibits $h_P$ as the $\pi|_A$-weighted average
of $P(\cdot,A^c)$ over $A$, so there exists $x^*\in A$ with
$P(x^*,A^c)\le h_P$, equivalently $P(x^*,A)\ge 1-h_P$.

\smallskip
\emph{Step 2 (data processing).}
Apply the partition map $f:S\to\{A,A^c\}$, $f(y)=\mathbf{1}_A(y)$.
The pushforwards of $P(x^*,\cdot)$ and $\pi$ are the Bernoulli laws
$\bigl(P(x^*,A),P(x^*,A^c)\bigr)$ and
$\bigl(\pi(A),1-\pi(A)\bigr)$. The data processing inequality for KL
gives
\begin{equation}\label{eq:dpi-cheeger}
  \KL\!\bigl(P(x^*,\cdot)\,\|\,\pi\bigr)
  \;\ge\;
  P(x^*,A)\log\frac{P(x^*,A)}{\pi(A)}
  +P(x^*,A^c)\log\frac{P(x^*,A^c)}{1-\pi(A)}.
\end{equation}

\smallskip
\emph{Step 3 (monotone substitution).}
Fix $q=\pi(A)$ and view the right side of \eqref{eq:dpi-cheeger} as
\[
  g(p)\;:=\;p\log\frac{p}{q}+(1-p)\log\frac{1-p}{1-q},\qquad p\in(0,1).
\]
Then $g'(p)=\log\!\bigl(p(1-q)/(q(1-p))\bigr)>0$ for $p>q$, while
$g'(q)=0$, so $g$ is nondecreasing on $[q,1]$ (with the endpoint
$p=1$ understood by continuity). The hypothesis $h_P\le 1-\pi(A)$ gives
$1-h_P\ge \pi(A)=q$, and Step~1 yields $P(x^*,A)\ge 1-h_P\ge q$, so we
are on the nondecreasing branch and may replace $p=P(x^*,A)$ by its lower
bound $1-h_P$:
\begin{equation}\label{eq:binary-kl-cheeger}
  \KL\!\bigl(P(x^*,\cdot)\,\|\,\pi\bigr)
  \;\ge\;
  (1-h_P)\log\frac{1-h_P}{\pi(A)}+h_P\log\frac{h_P}{1-\pi(A)}.
\end{equation}

\smallskip
\emph{Step 4 (divide by $\log(1/\pi(x^*))$).}
By Definition~\ref{def:r},
$r(x^*)=\KL(P(x^*,\cdot)\|\pi)/\log(1/\pi(x^*))$, and
$M\ge r(x^*)$ by Definition~\ref{def:rho}. Dividing
\eqref{eq:binary-kl-cheeger} by $\log(1/\pi(x^*))>0$ yields the
displayed bound.

\smallskip
\emph{Asymptotic.}
As $h_P\to 0$,
\[
  (1-h_P)\log\frac{1-h_P}{\pi(A)}\;\longrightarrow\;\log\frac{1}{\pi(A)},
  \qquad
  h_P\log\frac{h_P}{1-\pi(A)}\;\longrightarrow\;0.
\]
Hence the numerator tends to $\log(1/\pi(A))$, while the denominator
is at most $\log(1/\pi_{\min})$, yielding
\[
  M\;\gtrsim\;\frac{\log(1/\pi(A))}{\log(1/\pi_{\min})}.
\]
\end{proof}

\begin{corollary}[Uniform $\pi$: Cheeger bound on $M$]
\label{cor:cheeger-uniform}
If $\pi$ is uniform, the Cheeger cut has $|A|\le n/2$, and
$h_P\le 1-|A|/n$, then
\[
  M
  \;\ge\;
  \frac{(1-h_P)\log\frac{n(1-h_P)}{|A|}
  +h_P\log\frac{nh_P}{n-|A|}}{\log n}.
\]
For small $h_P$, $M\gtrsim 1-\log|A|/\log n$.
In particular, if $|A|=O(n^\alpha)$ for $\alpha\in(0,1)$, then
$M\ge 1-\alpha+o(1)$.
\end{corollary}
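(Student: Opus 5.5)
The plan is to specialize Theorem~\ref{thm:cheeger-M} to uniform $\pi$, with no new probabilistic argument. Under uniformity $\pi(A)=|A|/n$ and $\pi(x^*)=1/n$ for every state, so $\log(1/\pi(x^*))=\log n$. The hypotheses translate directly. The Cheeger cut satisfies $\pi(A)\le 1/2$, i.e.\ $|A|\le n/2$. The condition $h_P\le 1-\pi(A)$ becomes $h_P\le 1-|A|/n$. Hence Theorem~\ref{thm:cheeger-M} applies and yields a state $x^*\in A$ with $P(x^*,A)\ge 1-h_P$ and
\[
  M\ge r(x^*)\ge \frac{(1-h_P)\log\frac{1-h_P}{|A|/n}+h_P\log\frac{h_P}{1-|A|/n}}{\log n}.
\]
Rewriting $\frac{1-h_P}{|A|/n}=\frac{n(1-h_P)}{|A|}$ and $\frac{h_P}{1-|A|/n}=\frac{nh_P}{n-|A|}$ gives the displayed bound exactly. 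As in the theorem, the $h_P\log(\cdot)$ term is read as $0$ when $h_P=0$.

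For the small-$h_P$ statement I would let $h_P\to0$ with $n$ and $A$ fixed, as in the asymptotic part of Theorem~\ref{thm:cheeger-M}. The first term tends to $\log(n/|A|)$ and the second to $0$, since $t\log t\to0$. Dividing by $\log n$ gives
\[
  M\gtrsim \frac{\log n-\log|A|}{\log n}=1-\frac{\log|A|}{\log n}.
\]
For the final clause, suppose $|A|\le Cn^\alpha$. Then $\log|A|/\log n\le \alpha+\log C/\log n=\alpha+o(1)$, so $M\ge 1-\alpha+o(1)$.

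The main obstacle is interpreting ``$\gtrsim$'' and ``$o(1)$'' in a regime where both $h_P$ and $n$ vary. To make the last clause rigorous I would require $h_P=o(1)$ along the sequence, and then bound the error terms directly rather than rely on the fixed-$n$ limit. First, $(1-h_P)\log\frac{n(1-h_P)}{|A|}\ge (1-h_P)\log\frac{n}{|A|}+(1-h_P)\log(1-h_P)$, where the last summand is $O(h_P)$. Second, $h_P\log\frac{nh_P}{n-|A|}\ge h_P\log h_P$, because $n/(n-|A|)\ge1$; this term is $o(1)$. After dividing by $\log n\to\infty$, both error terms are $o(1)$. The main term is $(1-h_P)(1-\log|A|/\log n)\ge (1-o(1))(1-\alpha-o(1))$, which gives $M\ge 1-\alpha+o(1)$.
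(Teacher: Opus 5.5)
Your proposal is correct and follows the paper's proof, which is just the substitution $\pi(x)=1/n$, $\pi(A)=|A|/n$, $\log(1/\pi(x^*))=\log n$ into Theorem~\ref{thm:cheeger-M}. Your explicit error estimates for the asymptotic clauses go beyond what the paper writes and are sound, including stating outright the implicit requirement $h_P=o(1)$ that the paper's ``for small $h_P$'' leaves unstated.
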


\begin{proof}
Immediate from Theorem~\ref{thm:cheeger-M} with $\pi(x)=1/n$,
$\pi(A)=|A|/n$, and $\log(1/\pi(x^*))=\log n$.
\end{proof}

\begin{remark}[Limitation for balanced bottlenecks]
\label{rmk:cheeger-balanced}
Corollary~\ref{cor:cheeger-uniform} is strongest for small bottlenecks
($|A|\ll n$), where it yields $M\to 1$. However, for balanced cuts in the
uniform setting ($|A|=n/2$), the displayed lower bound is at most
$\log 2/\log n$ for every $h_P\in[0,1/2]$ (with the endpoint $h_P=0$
interpreted by continuity), and therefore tends to zero as $n\to\infty$
regardless of how small $h_P$ is. This reflects the fact that when the
Cheeger cut splits the state space into two halves of comparable size, the
resulting obstruction is global rather than localized, and cannot be detected
sharply by a single high-retention state.
\end{remark}

\section{The Convexity Gap and Near-Optimality of Point Masses}
\label{sec:convexity-gap}

We address the question: \emph{how much can collective initial distributions
outperform point masses in achieving the supremum $\etaKL(P)$?} We identify
the mechanism as the convexity gap of KL divergence and express it in terms
of mutual information.

\begin{definition}[Mutual information for a general initial distribution]
\label{def:mi-general}
For any probability distribution $\mu$ on $S$, let $X\sim\mu$ and
$Y\mid X=x\sim P(x,\cdot)$. Define
\[
  I_\mu(X;Y)
  := \sum_{x\in S}\mu(x)\,\KL\bigl(P(x,\cdot)\,\|\,\mu P\bigr),
\]
where $\mu P$ is the marginal of $Y$. This equals the standard Shannon
mutual information between $X$ and $Y$ under the joint distribution
$\mu(x)P(x,y)$.
\end{definition}

\begin{definition}[Convexity gap]
For a distribution $\mu$ on $S$, define
\[
  \Delta(\mu)
  := \sum_{x\in S}\mu(x)\,\KL(P(x,\cdot)\|\pi)
     - \KL(\mu P\|\pi).
\]
By the convexity of $\nu\mapsto\KL(\nu\|\pi)$ (Jensen's inequality),
$\Delta(\mu)\ge 0$.
\end{definition}

\begin{theorem}[Convexity gap identity]
\label{thm:convexity-gap}
For any probability distribution $\mu$ on $S$,
\[
  \Delta(\mu)
  = \sum_{x\in S}\mu(x)\,\KL(P(x,\cdot)\|\mu P)
  = I_\mu(X;Y).
\]
(Identity of this form---sometimes called the ``compensation
identity'' or the chain-rule expression for mutual information---is
standard in information theory; see, e.g.,
Polyanskiy--Wu~\cite[Ch.~4]{PolyanskiyWu2025} and
Csisz\'ar--Shields~\cite{CsiszarShields2004}. We record the short proof in
the form needed below and use it to control the gap
$\etaKL(P)-M$ in Theorem~\ref{thm:central-bound}.)
\end{theorem}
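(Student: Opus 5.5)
The plan is a direct term-by-term computation based on the logarithm identity
\[
  \log\frac{P(x,y)}{\pi(y)} \;=\; \log\frac{P(x,y)}{(\mu P)(y)} \;+\; \log\frac{(\mu P)(y)}{\pi(y)},
\]
valid whenever $P(x,y)>0$. Because $\pi$ has full support and $(\mu P)(y)\ge \mu(x)P(x,y)$, every $y$ with $\mu(x)P(x,y)>0$ also has $(\mu P)(y)>0$. Hence all logarithms that actually appear are finite. We adopt the convention $0\log 0=0$ and sum only over pairs $(x,y)$ with $\mu(x)P(x,y)>0$. This support bookkeeping is the only delicate point, and it is what guarantees that $\KL(P(x,\cdot)\|\mu P)$ is finite for every $x$ with $\mu(x)>0$.

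\textbf{Expanding the row average.} Multiply the identity by $\mu(x)P(x,y)$ and sum over $x,y$. The left side becomes $\sum_x \mu(x)\KL(P(x,\cdot)\|\pi)$. The first term on the right becomes $\sum_x\mu(x)\KL(P(x,\cdot)\|\mu P)$. In the second term the summand depends only on $y$, so summing over $x$ first gives $\sum_x\mu(x)P(x,y)=(\mu P)(y)$. That term is therefore
\[
  \sum_y (\mu P)(y)\log\frac{(\mu P)(y)}{\pi(y)} \;=\; \KL(\mu P\|\pi).
\]

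\textbf{Conclusion.} Rearranging gives
\[
  \Delta(\mu) \;=\; \sum_x\mu(x)\KL(P(x,\cdot)\|\mu P),
\]
and this equals $I_\mu(X;Y)$ by Definition~\ref{def:mi-general}. To confirm that it is the Shannon mutual information, rewrite the sum as
\[
  \sum_{x,y}\mu(x)P(x,y)\log\frac{\mu(x)P(x,y)}{\mu(x)(\mu P)(y)},
\]
which is $\KL$ of the joint law from the product of its marginals. Since all sums are finite, no convergence issues arise. As a byproduct, nonnegativity of $\Delta(\mu)$ follows independently of Jensen's inequality.
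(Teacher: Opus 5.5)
Your proposal is correct and is essentially the paper's argument. Both split $\log\bigl(P(x,y)/\pi(y)\bigr)$ through $(\mu P)(y)$ and collapse the $\mu$-average of the second piece to $\KL(\mu P\|\pi)$. Your explicit support bookkeeping, showing that $(\mu P)(y)>0$ wherever $\mu(x)P(x,y)>0$ and restricting the sums to such pairs, is a small rigor point that the paper leaves implicit.
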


\begin{proof}
For any $x\in S$, the difference of KL divergences with different second arguments is:
\begin{align*}
  \Delta(\mu)
  &= \sum_x\mu(x)\Bigl[\KL(P(x,\cdot)\|\pi) - \KL(P(x,\cdot)\|\mu P)\Bigr] \\
  &\quad + \sum_x\mu(x)\KL(P(x,\cdot)\|\mu P) - \KL(\mu P\|\pi).
\end{align*}

The first sum equals
\begin{align*}
  \sum_x\mu(x)\sum_y P(x,y)\log\frac{(\mu P)(y)}{\pi(y)}
  &= \sum_y(\mu P)(y)\log\frac{(\mu P)(y)}{\pi(y)} \\
  &= \KL(\mu P\|\pi).
\end{align*}

Substituting:
\begin{align*}
  \Delta(\mu)
  &= \KL(\mu P\|\pi) + \sum_x\mu(x)\KL(P(x,\cdot)\|\mu P) - \KL(\mu P\|\pi) \\
  &= \sum_x\mu(x)\KL(P(x,\cdot)\|\mu P) \\
  &= I_\mu(X;Y). \qedhere
\end{align*}
\end{proof}

\begin{theorem}[Central bound: mutual information controls the gap
$\etaKL(P) - M$]
\label{thm:central-bound}
For any $\mu\neq\pi$,
\[
  \frac{\KL(\mu P\|\pi)}{\KL(\mu\|\pi)}
  \;\le\;
  M\cdot\frac{\KL(\mu\|\pi)+H(\mu)}{\KL(\mu\|\pi)}
  \;-\;
  \frac{I_\mu(X;Y)}{\KL(\mu\|\pi)},
\]
where $H(\mu)=-\sum_x\mu(x)\log\mu(x)$ is the Shannon entropy.
Equality holds if and only if $r(x)=M$ for all $x$ in the support of $\mu$.
\end{theorem}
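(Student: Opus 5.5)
The plan is to start from the convexity gap identity (Theorem~\ref{thm:convexity-gap}) rewritten as
\[
  \KL(\mu P\|\pi) \;=\; \sum_{x\in S}\mu(x)\,\KL(P(x,\cdot)\|\pi) \;-\; I_\mu(X;Y),
\]
and then bound the row-averaged term using the retention profile. By Definition~\ref{def:r}, each row divergence factors as $\KL(P(x,\cdot)\|\pi)=r(x)\log(1/\pi(x))$. Since $r(x)\le M$ and $\log(1/\pi(x))>0$ for every $x$ (because $0<\pi(x)<1$), we get
\[
  \sum_x\mu(x)\KL(P(x,\cdot)\|\pi)
  \;\le\; M\sum_x\mu(x)\log\frac{1}{\pi(x)}.
\]

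The key algebraic step is to recognize the cross-entropy on the right:
\[
  \sum_x\mu(x)\log\frac{1}{\pi(x)}
  = \sum_x\mu(x)\log\frac{\mu(x)}{\pi(x)} - \sum_x\mu(x)\log\mu(x)
  = \KL(\mu\|\pi)+H(\mu),
\]
with the convention $0\log 0=0$, so that only the support of $\mu$ contributes. Substituting and dividing by $\KL(\mu\|\pi)$ gives the displayed inequality. Division is legitimate because $\mu\neq\pi$ implies $\KL(\mu\|\pi)>0$.

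For the equality clause, the only inequality used is $\sum_x\mu(x)\,r(x)\log(1/\pi(x))\le M\sum_x\mu(x)\log(1/\pi(x))$. The convexity-gap step is an identity, so it cannot introduce slack. The inequality is a sum of nonnegative termwise gaps $\mu(x)(M-r(x))\log(1/\pi(x))$, each with strictly positive weight $\log(1/\pi(x))$. It is therefore an equality if and only if $r(x)=M$ whenever $\mu(x)>0$, which is exactly the stated condition.

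I expect no real obstacle. The only points needing care are the support conventions in $H(\mu)$ and the positivity of $\log(1/\pi(x))$, which relies on $n\ge 2$ and full support of $\pi$, so that the equality characterization is an honest "if and only if" rather than merely "if".
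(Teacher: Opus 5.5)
Your proposal is correct and follows the paper's proof essentially step for step: the convexity-gap identity, the termwise bound $r(x)\le M$ turning the row average into $M$ times the cross-entropy $\KL(\mu\|\pi)+H(\mu)$, and division by $\KL(\mu\|\pi)>0$. Your termwise-gap argument for the equality clause, using $\log(1/\pi(x))>0$, makes the ``only if'' direction more explicit than the paper's one-line justification does.
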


\begin{proof}
From Theorem~\ref{thm:convexity-gap},
$\KL(\mu P\|\pi) = \sum_x \mu(x)\KL(P(x,\cdot)\|\pi) - I_\mu(X;Y)$.
Since $r(x)\le M$,
\[
  \sum_x \mu(x)\KL(P(x,\cdot)\|\pi)
  = \sum_x \mu(x)\,r(x)\log(1/\pi(x))
  \le M\sum_x \mu(x)\log(1/\pi(x)).
\]
The identity
$\sum_x\mu(x)\log(1/\pi(x))
= \KL(\mu\|\pi)+H(\mu)$
gives
\[
  \KL(\mu P\|\pi)
  \le M(\KL(\mu\|\pi)+H(\mu)) - I_\mu(X;Y).
\]
Dividing by $\KL(\mu\|\pi)$ yields the bound. Equality holds when
$r(x)\le M$ is tight for all $x$ in $\mathrm{supp}(\mu)$, i.e., $r(x)=M$
on the support.
\end{proof}

\begin{remark}[Interpretation of the central bound]
\label{rmk:central-interpretation}
The bound has two terms pulling in opposite directions. The first,
$M\cdot H(\mu)/\KL(\mu\|\pi)$, is an \emph{entropy inflation}: spreading
$\mu$ increases the cross-entropy
$\sum_x\mu(x)\log(1/\pi(x))=\KL(\mu\|\pi)+H(\mu)$ relative to the KL
denominator, and so loosens the bound. The second,
$I_\mu(X;Y)/\KL(\mu\|\pi)$, enters with a minus sign and tightens it:
any $\mu$ supported on states with distinct rows forces $\mu P$ to
retain row-specific structure, which costs positive mutual information.
The two effects are charged against each other, so a candidate $\mu$
can outperform a point mass only when entropy inflation gains beat the
$I_\mu$ penalty.

The point-mass case is degenerate: $H(\delta_x)=0$ and
$I_{\delta_x}(X;Y)=0$ recover $r(x)\le M$ exactly. The opposite limit
$\mu\to\pi$ is also degenerate, in the other direction:
$H(\mu)/\KL(\mu\|\pi)$ blows up and the bound stops being informative,
which is why the local quadratic estimate of
Proposition~\ref{prop:local-l2-lower} is needed near stationarity. The
two bounds cover non-overlapping regimes.

Read through $L(P)$: when $L(P)$ is small the high-$r$ set carries
little $\pi$-mass, so distributions concentrated on it have little
entropy to inflate with, and distributions spread over the rest of the
space pay an $I_\mu$ penalty unless their rows already coincide. This
is a heuristic for point-mass near-optimality, not a theorem;
Appendix~\ref{app:counterexample} shows it can fail when many low-mass
states share identical rows. When $L(P)\approx 1$ the high-$r$ set is
the whole space, the entropy term is no longer constrained, and
collective or near-stationary perturbations can win, as they do for
the path, cycle, hypercube, and barbell.
\end{remark}

\begin{corollary}[Uniform-$\pi$ contraction bound]
\label{cor:eta-uniform-bound}
If $\pi$ is uniform on $S$ with $|S|=n$, then for any distribution
$\mu$ on $S$,
\[
  \KL(\mu P\|\pi)
  \;\le\;
  M\,\log n \;-\; I_\mu(X;Y).
\]
\end{corollary}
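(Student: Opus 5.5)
The plan is to specialize the intermediate inequality from the proof of Theorem~\ref{thm:central-bound} to uniform $\pi$, before dividing by $\KL(\mu\|\pi)$. That proof established, for every distribution $\mu$ (the restriction $\mu\neq\pi$ is only needed for the division step),
\[
  \KL(\mu P\|\pi)\;\le\; M\sum_{x\in S}\mu(x)\log\frac{1}{\pi(x)} \;-\; I_\mu(X;Y).
\]
It rests on two facts. The first is the convexity-gap identity of Theorem~\ref{thm:convexity-gap}, $\KL(\mu P\|\pi)=\sum_x\mu(x)\KL(P(x,\cdot)\|\pi)-I_\mu(X;Y)$. The second is the pointwise bound $\KL(P(x,\cdot)\|\pi)=r(x)\log(1/\pi(x))\le M\log(1/\pi(x))$, which uses Definition~\ref{def:r} and $r\le M$.

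With $\pi(x)=1/n$ for all $x$, the cross-entropy term collapses to $\sum_x\mu(x)\log n=\log n$ for every $\mu$. Substituting this gives $\KL(\mu P\|\pi)\le M\log n - I_\mu(X;Y)$ directly. Equivalently, one can note that $\KL(\mu\|\pi)+H(\mu)=\log n$ in the uniform case and multiply the bound of Theorem~\ref{thm:central-bound} through by $\KL(\mu\|\pi)$.

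The only point needing care is the case $\mu=\pi$, which Theorem~\ref{thm:central-bound} excludes. This is why I would work from the pre-division inequality, which holds for every $\mu$ with no exception. As a sanity check, at $\mu=\pi$ the claim reads $0\le M\log n-\MI(X_0;X_1)$. By Proposition~\ref{prop:uniform-mi} this is $\bar r_\pi\le M$, which is consistent with Proposition~\ref{prop:hierarchy}.

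I expect no real obstacle; the argument is a direct specialization.
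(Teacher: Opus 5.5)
Your proposal is correct and matches the paper's proof: the paper also substitutes $\KL(\mu\|\pi)+H(\mu)=\sum_x\mu(x)\log(1/\pi(x))=\log n$ into the inequality of Theorem~\ref{thm:central-bound} taken before the division by $\KL(\mu\|\pi)$. Your explicit note that this pre-division inequality also covers $\mu=\pi$ makes clear a point the paper leaves implicit.
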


\begin{proof}
When $\pi$ is uniform,
$\sum_x\mu(x)\log(1/\pi(x)) = \log n$ for every $\mu$,
so $\KL(\mu\|\pi) + H(\mu) = \log n$.
Substituting into Theorem~\ref{thm:central-bound}
(before dividing by $\KL(\mu\|\pi)$) gives the stated bound.
\end{proof}

\subsection*{Numerical methodology}
\label{sec:numerical-methodology}

The numerical experiments are exploratory and are used to calibrate the
framework, not to certify exact values of $\etaKL(P)$. For a fixed chain, the
optimized objective is
\[
  F(\mu)=\frac{\KL(\mu P\|\pi)}{\KL(\mu\|\pi)},
  \qquad \mu\neq\pi.
\]
The search aggregates the best ratio found across six candidate classes:
\begin{enumerate}
\item \emph{Point masses.} Each $\delta_x$, $x\in S$, is evaluated explicitly,
  since the softmax parameterization below does not represent the simplex
  boundary exactly.
\item \emph{Near-stationary eigenfunction candidates.} For each nontrivial
  eigenpair $(\lambda_k,\phi_k)$ of the $L^2(\pi)$-symmetrization
  \[
    A \;=\; \mathrm{diag}(\sqrt\pi)\,P\,\mathrm{diag}(\sqrt\pi)^{-1},
  \]
  after rescaling by $1/\sqrt\pi$, mean-centering against $\pi$, and
  $L^2(\pi)$-normalizing, we test
  $\mu_\varepsilon=\pi\odot(1+\varepsilon\phi_k)$ for
  $\varepsilon\in\{0.05,\,0.1,\,0.2,\,0.4,\,0.6,\,0.8\}$, skipping any
  $\varepsilon$ that produces a non-positive coordinate. These calibrate
  $F(\mu_\varepsilon)$ against the analytic limit
  \[
    F(\mu_\varepsilon)\;\longrightarrow\;
    \|P\phi_k\|_{L^2(\pi)}^2\,/\,\|\phi_k\|_{L^2(\pi)}^2
  \]
  predicted by Proposition~\ref{prop:local-l2-lower}.
\item \emph{Family-specific structured mixtures}, e.g.\ arcs on paths
  and cycles, clique-supported distributions on barbells, and
  Hamming-ball or face distributions on hypercubes.
\item \emph{Dirichlet random initializations:} $120$ samples per chain
  with concentration $\alpha$ chosen uniformly per draw from
  $\{0.1,\,0.3,\,1,\,3,\,10\}$, covering both diffuse and concentrated
  regimes.
\item \emph{Heavy-tailed softmax initializations:} $60$ samples
  $\theta\sim\mathcal N(0,\sigma^2 I_n)$ with $\sigma$ chosen uniformly
  per draw from $\{0.5,\,2,\,5,\,15\}$.
\item \emph{Smooth interior optimization} on the simplex via the softmax map
  \[
    \mu_i(\theta)=\frac{e^{\theta_i}}{\sum_j e^{\theta_j}},
  \]
  using L-BFGS-B from \texttt{scipy.optimize.minimize} with
  \texttt{maxiter}\,$=5000$, \texttt{ftol}\,$=10^{-15}$, and
  \texttt{gtol}\,$=10^{-12}$. For $n\le 30$ a Nelder--Mead pass with
  \texttt{maxiter}\,$=10^4$, \texttt{xatol}\,$=10^{-12}$, and
  \texttt{fatol}\,$=10^{-14}$ is run from the same initializations as a
  cross-check.
\end{enumerate}
The objective $F$ is singular at $\mu=\pi$: the implementation returns
$0$ whenever $\KL(\mu\|\pi)<10^{-12}$, and KL evaluations clip
masses below $10^{-300}$ to avoid $\log 0$. Boundary point masses are
evaluated directly (see item~1) rather than approximated through softmax.
The near-stationary values are calibrated against the analytic limit of
Proposition~\ref{prop:local-l2-lower} via the eigenfunction candidates of
item~2.

In the reproducibility audit accompanying this manuscript, the random seed
is fixed at $20260511$, and the numerical environment is Python~3.12.13
with NumPy~2.3.5. The reported table values are the best lower bounds
found over these candidate classes; the driver script is
\texttt{reproducibility/}\allowbreak\texttt{verify\_table\_}\allowbreak\texttt{remark\_7\_6.py},
with results serialized to
\texttt{verify\_table\_}\allowbreak\texttt{remark\_7\_6\_}\allowbreak\texttt{results.json}.
Unless an analytic argument is stated separately, the entries should be
interpreted as numerical lower bounds on $\etaKL(P)/M$, not as certified
suprema.

\begin{remark}[Empirical behavior of $\etaKL(P)/M$]
\label{rmk:near-opt}
The following table summarizes the best numerical lower bounds found for
$\etaKL(P)/M$ in the tested chain families. The table is intended to describe
observed behavior in these examples, not to assert universal optimality of any
candidate class.

\begin{center}
\renewcommand{\arraystretch}{1.15}
\begin{tabular}{@{}lcccc@{}}
\toprule
Chain family & $|S|$ tested & $L(P)$ & $\etaKL(P)/M$ & Character \\
\midrule
Complete graph $K_n$ & $4$--$20$ & $1.00$ & $1.00$ & Point-mass benchmark \\
Star $S_n$ & $5$--$50$ & $0.50$ & $1.00^{\dagger}$ & No improvement found \\
Biased BD ($\lambda=0.85$)$^{\ddagger}$ & $10$--$40$ & $0.24$--$0.27$ & $1.01$--$1.04$ & Near point-mass \\
Two-state symmetric$^{\ast}$ & $2$ & $1.00$ & $1.21$--$1.37$ & Local spectral optimum \\
Hypercube $Q_d$ & $4$--$128$ & $1.00$ & $1.00$--$1.13$ & Mild gap \\
Lazy cycle $C_n$ & $10$--$50$ & $1.00$ & $1.35$--$1.49$ & Local quadratic regime \\
Lazy path $P_n$$^{\S}$ & $10$--$50$ & $0.87$--$0.95$ & $1.28$--$1.47$ & Local quadratic regime \\
Barbell $B_m$ & $10$--$18$ & $0.96$--$0.99$ & $2.33$--$2.44$ & Strong local quadratic regime \\
\bottomrule
\end{tabular}
\end{center}
\smallskip\noindent
{\small Sources: rows for the complete graph $K_n$ and the symmetric
two-state chain are analytic ($\etaKL/M$ in closed form); all other
entries are numerical lower bounds from the multi-start search of
\S\ref{sec:numerical-methodology}.\\
$^{\dagger}$Based on numerical optimization, not analytic proof.\\
$^{\ddagger}$Biased birth--death chain on $\{0,1,\dots,n{-}1\}$ with bias
parameter $\lambda\in(0,1)$. Interior states ($1\le x\le n{-}2$) are lazy:
$P(x,x)=1/2$, $P(x,x{+}1)=\lambda/2$, $P(x,x{-}1)=(1-\lambda)/2$.
Boundary states are reflecting and non-lazy:
$P(0,0)=1-\lambda$, $P(0,1)=\lambda$,
$P(n{-}1,n{-}1)=\lambda$, $P(n{-}1,n{-}2)=1-\lambda$.
The stationary distribution is proportional to
\[
  w_0=1,\qquad
  w_x=2\Bigl(\frac{\lambda}{1-\lambda}\Bigr)^x\quad(1\le x\le n-2),
  \qquad
  w_{n-1}=\Bigl(\frac{\lambda}{1-\lambda}\Bigr)^{n-1},
\]
with normalization $\pi(x)=w_x/\sum_y w_y$.\\
$^{\S}$Non-reflecting boundary: endpoints transition to their unique
neighbor with probability~$1/2$, yielding a non-uniform stationary
distribution.\\
$^{\ast}$For the symmetric two-state lazy chain with off-diagonal
probability $a\in(0,1/2]$, the eigenvalues are $\{1,1-2a\}$ and a direct
computation gives the closed form $\etaKL(P)=(1-2a)^2$ (cf.\
Ahlswede--G\'acs~\cite{AhlswedeGacs1976}); this matches the local
quadratic value of Proposition~\ref{prop:local-l2-lower} and exceeds the
point-mass ratio for the tested values $a\in\{0.1,0.2,0.35\}$.}
The computations support the qualitative distinction suggested by
Theorem~\ref{thm:central-bound} and Proposition~\ref{prop:local-l2-lower}:
low-$L$ examples in this test suite were close to the point-mass lower
bound, whereas several high-$L$ examples were controlled by collective or
near-stationary perturbations. This is evidence for the heuristic, not a
classification theorem.
\end{remark}

\begin{remark}[Limits of $L(P)$ as a diagnostic of $\etaKL(P)/M$]
\label{rmk:rho-counterexample}
One might conjecture that $L(P_n)\to 0$ implies $\etaKL(P_n)/M_n\to 1$.
Appendix~\ref{app:counterexample} provides a formal construction proving this
is false: a family of chains with $L(P)\to 0$ but
$\etaKL(P)/M\ge 2-o(1)$. The mechanism is that $L(P)$ controls the
total $\pi$-mass of high-$r$ states but not their cardinality; when many
low-mass states share identical rows, the uniform mixture has
$I_\mu(X;Y)=0$ while entropy inflation drives the contraction ratio
beyond any point mass.
\end{remark}

\section{Mixing Time and Contraction Rate}
\label{sec:mixing}

We record the mixing-time consequences of KL contraction. This bound is
not competitive with the sharpest spectral or coupling bounds (see
\cite{LevinPeres2017,MontenegroTetali2006} for comprehensive overviews); its
purpose is to connect the row-based quantities $r(x)$, $M$, and $L(P)$ to
mixing time, thus completing the structural analysis.

\begin{theorem}[Mixing time upper bound via KL contraction]
\label{thm:mixing-upper}
For any ergodic chain and any $\varepsilon\in(0,1)$, if
$0<\etaKL(P)<1$, then
\[
  t_{\mathrm{mix}}(\varepsilon)
  \;\le\;
  \max\left\{0,\,
  \left\lceil
  \frac{\log\!\bigl(\frac{1}{2\varepsilon^2}\log(1/\pi_{\min})\bigr)}{-\log \etaKL(P)}
  \right\rceil
  \right\}.
\]
If $\etaKL(P)=0$, then $t_{\mathrm{mix}}(\varepsilon)\le1$.
\end{theorem}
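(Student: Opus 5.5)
The plan is to iterate the definition of $\etaKL(P)$ along the trajectory started from a point mass, and then convert KL to total variation with Pinsker's inequality. Since $\etaKL(P)$ is a supremum over all $\mu\neq\pi$, we have $\KL(\nu P\|\pi)\le\etaKL(P)\,\KL(\nu\|\pi)$ for every distribution $\nu$. When $\nu=\pi$ both sides vanish, so the inequality holds trivially there as well. Fix $x\in S$ and apply this to $\nu=\delta_xP^{s}$ for $s=0,1,\dots,t-1$. Induction then gives
\[
  \KL\bigl(P^t(x,\cdot)\,\|\,\pi\bigr)\le \etaKL(P)^t\,\KL(\delta_x\|\pi)=\etaKL(P)^t\log\frac{1}{\pi(x)}\le \etaKL(P)^t\log\frac{1}{\pi_{\min}}.
\]

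Next we apply Pinsker's inequality in the form $\|\mu-\nu\|_{\TV}\le\sqrt{\KL(\mu\|\nu)/2}$, with the normalization $\|\cdot\|_{\TV}=\tfrac12\|\cdot\|_1$ used in the paper. This yields
\[
  \max_x\|P^t(x,\cdot)-\pi\|_{\TV}\le\sqrt{\tfrac12\,\etaKL(P)^t\log(1/\pi_{\min})}.
\]
The right-hand side is at most $\varepsilon$ as soon as $\etaKL(P)^t\le 2\varepsilon^2/\log(1/\pi_{\min})$. Take logarithms and divide by $-\log\etaKL(P)>0$, which is legitimate because $0<\etaKL(P)<1$. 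The condition becomes $t\ge \log\bigl(\tfrac{1}{2\varepsilon^2}\log(1/\pi_{\min})\bigr)/(-\log\etaKL(P))$. The smallest nonnegative integer satisfying it is the ceiling, or $0$ if the quantity is negative. Hence the $\max\{0,\lceil\cdot\rceil\}$ form. One caveat is $t=0$: this case needs $\tfrac12\log(1/\pi_{\min})\le\varepsilon^2$, which is exactly the case where the logarithm in the numerator is nonpositive, so it is consistent.

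For the case $\etaKL(P)=0$, a single step gives $\KL(P(x,\cdot)\|\pi)\le 0\cdot\log(1/\pi(x))=0$. Hence $P(x,\cdot)=\pi$ for every $x$, the TV distance after one step is $0\le\varepsilon$, and so $t_{\mathrm{mix}}(\varepsilon)\le 1$.

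The argument is essentially routine. The only points needing care are the bookkeeping of the Pinsker constant under the $\tfrac12\ell^1$ TV normalization, the case $\delta_xP^s=\pi$ in the iteration, and the sign and rounding conventions that produce the $\max\{0,\cdot\}$. Ergodicity is not actually used beyond ensuring $\pi$ has full support, which is already assumed.
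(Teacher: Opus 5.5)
Your proposal is correct and follows the paper's proof essentially step for step. Both iterate the one-step contraction from $\delta_x$ (the paper packages this as Proposition~\ref{prop:multi-step}), bound $\log(1/\pi(x))$ by $\log(1/\pi_{\min})$, apply Pinsker with the $\tfrac12\ell^1$ normalization, and solve for $t$; the $\etaKL(P)=0$ case is handled in one step. Your explicit handling of the degenerate iterate $\delta_xP^s=\pi$ and of the $t=0$ rounding case is slightly more careful than the paper's write-up.
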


\begin{proof}
If $\etaKL(P)=0$, then $\KL(\mu P\|\pi)=0$ for every initial distribution
$\mu$, hence $\mu P=\pi$ after one step. Assume now that $0<\etaKL(P)<1$.
From Proposition~\ref{prop:multi-step} (Appendix~\ref{app:contraction}):
$\KL(\delta_x P^t\|\pi) \le \etaKL(P)^t\log(1/\pi(x))$.
Applying Pinsker's inequality
$\|\mu-\nu\|_{\TV}^2\le\frac{1}{2}\KL(\mu\|\nu)$
and using $\log(1/\pi(x))\le\log(1/\pi_{\min})$:
\[
  \max_{x\in S}\|P^t(x,\cdot)-\pi\|_{\TV}
  \;\le\;
  \sqrt{\tfrac{1}{2}\,\etaKL(P)^t\,\log(1/\pi_{\min})}.
\]
We require this total-variation distance to be at most~$\varepsilon$.
Squaring both sides and rearranging yields
\[
  \etaKL(P)^t \;\le\; \frac{2\varepsilon^2}{\log(1/\pi_{\min})}.
\]
Taking the natural logarithm, and noting that
$\log\etaKL(P)<0$ since $\etaKL(P)<1$, the inequality reverses to give
\[
  t \;\ge\;
  \frac{\log\!\Bigl(\frac{\log(1/\pi_{\min})}{2\varepsilon^2}\Bigr)}
       {-\log\etaKL(P)}.
\]
Taking the ceiling and then the maximum with~$0$ to enforce a non-negative
number of steps yields the stated bound.
\end{proof}

\begin{remark}[Role of $M$ and $L(P)$]
From the hierarchy $\etaKL(P)\ge M=\bar r_\pi/L(P)$, large $M$ forces
$\etaKL(P)$ close to~$1$ and weakens the convergence guarantee.
For fixed $\bar r_\pi$, smaller $L(P)$ inflates $M$, formalizing the
intuition that localized obstructions impede worst-case convergence.
The hierarchy is one-sided: $M$ constrains $\etaKL(P)$ from below but not
from above.
\end{remark}

\begin{remark}[Inherent log-log structure]
The $\log\log(1/\pi_{\min})$ growth in the numerator is intrinsic to the
KL--Pinsker approach: the initial KL distance $\log(1/\pi_{\min})=O(\log n)$
requires only $O(\log\log n)$ steps of geometric contraction before Pinsker's
inequality certifies TV convergence. This makes the bound weaker than the
$O(\log n/\gamma)$ spectral bounds. The trade-off is computational:
$r(x)$, $M$, and $L(P)$ are computable directly from the kernel without
eigenvector decomposition.
\end{remark}

\section{Examples}
\label{sec:examples}

\begin{proposition}[Complete graph]
\label{prop:complete}
Consider the simple random walk on $K_n$:
$P(x,\cdot)=\text{Uniform on }S\setminus\{x\}$.
Then $\pi$ is uniform,
$H(P(x,\cdot))=\log(n-1)$,
$\etaKL(P)\ge 1-\log(n-1)/\log n$,
$r(x)$ is constant, and $L(P)=1$.
\end{proposition}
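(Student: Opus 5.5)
The plan is to verify the four claims in order: stationarity, the row entropy, the lower bound on $\etaKL(P)$, and the constancy of $r$ (which gives $L(P)=1$).

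\emph{Stationarity.} The matrix $P=\frac{1}{n-1}(J-I)$ is symmetric, where $J$ is the all-ones matrix. Hence it is doubly stochastic, since every column sums to $(n-1)\cdot\frac{1}{n-1}=1$. Therefore the uniform law satisfies $\pi P=\pi$, and it has full support. Because the walk on $K_n$ (for $n\ge 3$) is irreducible, this stationary law is unique, though uniqueness is not actually needed for the claims.

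\emph{Row entropy and the bound on $\etaKL$.} Each row $P(x,\cdot)$ is uniform on the $n-1$ states of $S\setminus\{x\}$, so $H(P(x,\cdot))=\log(n-1)$ for every $x$. In particular $H_{\min}(P)=\log(n-1)$. Corollary~\ref{cor:uniform} applies because $\pi$ is uniform, and it gives $\etaKL(P)\ge 1-\log(n-1)/\log n$ directly.

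\emph{Constancy of $r$ and $L(P)=1$.} Using the computation in the proof of Corollary~\ref{cor:uniform},
\[
\KL(P(x,\cdot)\|\pi)=\log n-H(P(x,\cdot))=\log\frac{n}{n-1}.
\]
Together with $\log(1/\pi(x))=\log n$, this gives
\[
r(x)=1-\frac{\log(n-1)}{\log n}
\]
for every $x$, so $r$ is constant. Consequently $\bar r_\pi=M$.

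For $n\ge 3$ we have $M>0$, and Definition~\ref{def:rho} gives $L(P)=\bar r_\pi/M=1$. In the degenerate case $n=2$, $r\equiv 0$, and the convention in Definition~\ref{def:rho} again gives $L(P)=1$.

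No step here is a real obstacle. The only point needing care is the $n=2$ case, where $M=0$ and the convention must be invoked rather than dividing by $M$.
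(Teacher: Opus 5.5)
Your proof is correct, and for the first three claims it matches the paper. The paper takes uniformity of $\pi$ for granted, whereas you justify it via double stochasticity of $P=\frac{1}{n-1}(J-I)$.

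For the constancy of $r$ you take a different route. The paper invokes Proposition~\ref{prop:vertex-trans}: automorphisms of $K_n$ act transitively and preserve $P$ and $\pi$, so $r$ is constant. You instead compute $r(x)=\log\frac{n}{n-1}/\log n=1-\log(n-1)/\log n$ directly from $\KL(P(x,\cdot)\|\pi)=\log n-H(P(x,\cdot))$. Your computation is self-contained and avoids the forward reference. It also yields the value of $M$ itself. This shows that the bound $\etaKL(P)\ge 1-\log(n-1)/\log n$ is exactly the row-based bound $\etaKL(P)\ge M$. It further makes clear that, when $\pi$ is uniform, equal row entropies alone force $L(P)=1$, with no appeal to symmetry. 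The paper's argument is computation-free and explains constancy structurally, placing $K_n$ in the vertex-transitive class of Remark~\ref{rmk:vertex-trans-speed}.

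One correction: your treatment of $n=2$ is wrong. For $n=2$ the walk is the deterministic flip: $P(x,\cdot)$ is the point mass at the other state. Hence $\KL(P(x,\cdot)\|\pi)=\log 2$ and $r\equiv 1$, not $0$; your own formula gives $r(x)=1-\log 1/\log 2=1$. Since $\log(n-1)<\log n$ for every $n\ge 2$, $M>0$ always, no convention is needed, and $L(P)=1$ follows by division in every case. You may have been thinking of the lazy walk on $K_2$, whose rows coincide with $\pi$. Relatedly, the walk on $K_2$ is also irreducible; what fails at $n=2$ is aperiodicity, not irreducibility. These slips do not affect the conclusion, but the stated claim $r\equiv 0$ should be removed.
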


\begin{proof}
Each row $P(x,\cdot)$ is uniform on $n-1$ states, so
$H(P(x,\cdot))=\log(n-1)$. Apply Corollary~\ref{cor:uniform}.
By vertex-transitivity (Proposition~\ref{prop:vertex-trans}), $r(x)$ is
constant, $M=\bar r_\pi$, and $L(P)=1$.
\end{proof}

\begin{proposition}[Lazy cycle]
\label{prop:cycle}
For the lazy random walk on the cycle $C_n$, $n\ge3$, with
$P(x,x)=1/2$, $P(x,x\pm 1)=1/4$,
the stationary distribution is uniform,
$H(P(x,\cdot)) = \frac{3}{2}\log 2$,
and $L(P)=1$ by vertex-transitivity.
\end{proposition}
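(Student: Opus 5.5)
The plan is to verify the three claims in turn: stationarity of the uniform law, the row entropy, and $L(P)=1$.

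\emph{Uniform stationarity.} The kernel is symmetric, since $P(x,y)=P(y,x)\in\{0,1/4,1/2\}$, and therefore doubly stochastic. Summing column $y$ gives $\tfrac12+\tfrac14+\tfrac14=1$. Hence $\pi(y)=\sum_x \tfrac1n P(x,y)=\tfrac1n$, and $\pi$ has full support, so the standing assumptions of the paper hold. The hypothesis $n\ge3$ guarantees that the states $x-1$, $x$, $x+1$ are distinct modulo $n$. Without it, the two $1/4$ masses would merge and the row would not consist of three distinct atoms.

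\emph{Row entropy.} Each row has exactly one atom of mass $1/2$ and two atoms of mass $1/4$. Thus
\[
H(P(x,\cdot))=\tfrac12\log 2+2\cdot\tfrac14\log 4=\tfrac12\log2+\log 2=\tfrac32\log 2,
\]
which is independent of $x$. Corollary~\ref{cor:uniform} then gives the accompanying bound $\etaKL(P)\ge 1-\tfrac{3\log 2}{2\log n}$.

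\emph{$L(P)=1$.} I would not cite the vertex-transitivity proposition abstractly; a direct computation suffices. Since $\pi$ is uniform, the formula in Corollary~\ref{cor:uniform} gives $\KL(P(x,\cdot)\|\pi)=\log n-H(P(x,\cdot))=\log n-\tfrac32\log2$. It follows that
\[
r(x)=1-\frac{3\log 2}{2\log n}
\]
for every $x$, so $\bar r_\pi=M$. If $M>0$, which holds exactly when $n>2^{3/2}$, i.e.\ for all $n\ge3$, then $L(P)=\bar r_\pi/M=1$. For completeness, the degenerate convention $L(P)=1$ when $M=0$ covers any edge case, so no case split is really needed.

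Equivalently, the rotations $x\mapsto x+k$ are automorphisms that preserve $P$ and $\pi$, which forces $r$ to be constant. This is the content of Proposition~\ref{prop:vertex-trans}.

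The only step that needs any care is the distinctness of the three row atoms, which is where $n\ge3$ is used. Everything else is routine arithmetic.
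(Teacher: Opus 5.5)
Your proof is correct and follows essentially the paper's route. The entropy computation is identical. For $L(P)=1$ the paper just invokes Proposition~\ref{prop:vertex-trans}, whereas your explicit formula $r(x)=1-\frac{3\log 2}{2\log n}$ gives the same constancy directly; it also confirms $M>0$ and the uniform stationarity, which the paper asserts without proof.
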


\begin{proof}
\[
  H(P(x,\cdot))=-\Bigl(\frac{1}{2}\log\frac{1}{2}+2\cdot\frac{1}{4}\log\frac{1}{4}\Bigr)
  =\frac{1}{2}\log 2+\frac{1}{2}\cdot 2\log 2
  =\frac{3}{2}\log 2.
\]
Vertex-transitivity (Proposition~\ref{prop:vertex-trans}) gives $L(P)=1$.
\end{proof}

\begin{proposition}[Vertex-transitive chains]
\label{prop:vertex-trans}
If $P$ arises from a random walk on a vertex-transitive graph, then $r(x)$ is
constant in $x$; hence, $L(P)=1$.
\end{proposition}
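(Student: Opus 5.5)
The plan is to use the graph automorphism group to transport the row divergence from one vertex to any other. Let $G=(S,E)$ be vertex-transitive, and let $P$ be the random walk on $G$, with $P(x,y)=\mathbf{1}[\{x,y\}\in E]/\deg(x)$ (or any walk whose transition probabilities are determined by the graph structure, e.g.\ a lazy version $P(x,x)=\beta$). For a graph automorphism $\sigma$ we have $\deg(\sigma x)=\deg(x)$ and $\{\sigma x,\sigma y\}\in E\iff\{x,y\}\in E$. Hence
\[
  P(\sigma x,\sigma y)=P(x,y)\qquad\text{for all } x,y\in S.
\]
Vertex-transitivity forces $G$ to be regular, so the uniform distribution is stationary. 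More precisely, $\sum_x \tfrac1n P(x,y)=\tfrac1n\deg(y)/\deg = \tfrac1n$. The paper's setup takes $\pi$ as given. If the stationary law is not unique, for instance when $G$ is disconnected, I would note that the statement is meant for the uniform $\pi$. Alternatively, I would observe that the invariant law is unique in the connected case and therefore automorphism-invariant. In either case $\pi\circ\sigma=\pi$, and on a transitive action this forces $\pi$ to be uniform.

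The main step is the substitution $y\mapsto\sigma y$, which is a bijection of $S$:
\[
  \KL(P(\sigma x,\cdot)\|\pi)=\sum_{y}P(\sigma x,\sigma y)\log\frac{P(\sigma x,\sigma y)}{\pi(\sigma y)}
  =\sum_y P(x,y)\log\frac{P(x,y)}{\pi(y)}=\KL(P(x,\cdot)\|\pi).
\]
We also have $\log(1/\pi(\sigma x))=\log(1/\pi(x))=\log n$. Therefore $r(\sigma x)=r(x)$. Given any $x,x'$, transitivity supplies an automorphism $\sigma$ with $\sigma x=x'$, so $r$ is constant, say equal to $c$.

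To finish, observe that $\bar r_\pi=\sum_x\pi(x)c=c=M$. If $c>0$, then $L(P)=\bar r_\pi/M=1$. If $c=0$, then $L(P)=1$ by the convention in Definition~\ref{def:rho}.

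The only delicate point is the identification of $\pi$. This is where I would be careful, and regularity together with the uniqueness remark above handles it. Everything else is a change of variables.
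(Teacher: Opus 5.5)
Your proof is correct and is essentially the paper's argument: automorphisms preserve the rows and $\pi$, so the change of variables $y\mapsto\sigma y$ gives $r(\sigma x)=r(x)$, and transitivity then makes $r$ constant. You are more careful than the paper in two places. First, the paper restricts to $\pi$-preserving automorphisms without checking that these act transitively; your regularity and uniqueness argument for $\pi\circ\sigma=\pi$ supplies that check. Your caveat about disconnected graphs with non-uniform $\pi$ is also apt, since the claim genuinely fails there. Second, you explicitly invoke the $M=0$ convention to conclude $L(P)=1$.
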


\begin{proof}
For any automorphism $\sigma$ of $(S,P)$ preserving $\pi$,
$P(\sigma(x),\sigma(y))=P(x,y)$ and $\pi(\sigma(x))=\pi(x)$. Therefore,
$\KL(P(\sigma(x),\cdot)\|\pi) = \KL(P(x,\cdot)\|\pi)$
and $\log(1/\pi(\sigma(x)))=\log(1/\pi(x))$, so
$r(\sigma(x))=r(x)$. Transitivity forces $r$ to be constant, whence
$\bar r_\pi=M$ and $L(P)=1$.
\end{proof}

\begin{remark}[Vertex-transitive chains span all mixing speeds]
\label{rmk:vertex-trans-speed}
Proposition~\ref{prop:vertex-trans} implies $L(P)=1$ for all
vertex-transitive chains, whether $\gamma=\Theta(1)$ (complete graph) or
$\gamma=\Theta(1/n^2)$ (cycle). Thus $L(P)$ is
\emph{structurally decoupled} from $\gamma$: no functional
relationship can hold across vertex-transitive chains.
\end{remark}

\begin{proposition}[Star graph]
\label{prop:star}
For the lazy random walk on the star graph with $n\ge3$ vertices, labelled
so that vertex~$0$ is adjacent to vertices $1,\ldots,n-1$ and there are no
other non-loop edges, the stationary distribution satisfies
$\pi(0)=1/2$ and $\pi(i)=1/(2(n-1))$ for $i\ge 1$.
In addition, $L(P)=1/2$.
\end{proposition}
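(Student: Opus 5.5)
The plan is to write out the lazy star kernel, check $\pi$ by detailed balance, and compute the retention profile $r$ of Definition~\ref{def:r} state by state. Two facts make the computation short: the hub's row coincides with $\pi$, and all leaves are symmetric.

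\emph{Kernel and stationary law.} Take the lazy walk $P(x,x)=1/2$ with the remaining mass spread uniformly over neighbours. This gives
\[
P(0,0)=\tfrac12,\qquad P(0,i)=\tfrac{1}{2(n-1)},\qquad P(i,i)=\tfrac12,\qquad P(i,0)=\tfrac12 \qquad (i\ge1).
\]
For the candidate $\pi(0)=1/2$, $\pi(i)=1/(2(n-1))$, I will check detailed balance:
\[
\pi(0)P(0,i)=\frac{1}{4(n-1)}=\pi(i)P(i,0).
\]
So $\pi$ is reversible, hence stationary. It is also normalized, since $\tfrac12+(n-1)\cdot\tfrac{1}{2(n-1)}=1$, and it has full support.

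\emph{Retention profile.} At the hub, observe that $P(0,\cdot)=\pi$ exactly. Hence $\KL(P(0,\cdot)\|\pi)=0$ and $r(0)=0$. At a leaf $i\ge1$, only the terms $y=i$ and $y=0$ contribute:
\[
\KL(P(i,\cdot)\|\pi)=\tfrac12\log\frac{1/2}{1/(2(n-1))}+\tfrac12\log\frac{1/2}{1/2}=\tfrac12\log(n-1).
\]
Dividing by $\log(1/\pi(i))=\log(2(n-1))$ gives
\[
r(i)=\frac{\log(n-1)}{2\log(2(n-1))}.
\]
This value is the same for every leaf, and it is strictly positive because $n\ge3$.

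\emph{Localization ratio.} From the two cases, $M=r(i)>0$, so the convention in Definition~\ref{def:rho} is not needed. Then
\[
\bar r_\pi=\pi(0)\cdot 0+\sum_{i\ge1}\pi(i)\,r(i)=\tfrac12 M,
\]
because the leaves carry total stationary mass $1/2$. Therefore $L(P)=\bar r_\pi/M=1/2$.

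There is no real obstacle in this argument. The only step that needs care is recognizing that the hub row equals $\pi$, which makes $r(0)$ vanish identically; everything else is a direct substitution.
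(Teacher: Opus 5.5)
Your proof is correct and follows the paper's argument essentially line for line. You write out the lazy star kernel, note that the hub row equals $\pi$ so $r(0)=0$, compute the common leaf value $r(i)=\frac{\log(n-1)}{2\log(2(n-1))}>0$, and conclude $\bar r_\pi=\tfrac12 M$ because the leaves carry total mass $1/2$. Your detailed-balance verification of $\pi$ is a small addition that the paper only asserts; that it is \emph{the} stationary distribution follows from irreducibility of the star.
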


\begin{proof}
The transition matrix satisfies
$P(0,0)=1/2$, $P(0,i)=1/(2(n-1))$ for $i\ge 1$,
and $P(i,i)=P(i,0)=1/2$ for each $i\ge 1$.

\medskip
\noindent\textbf{State $0$.}
$P(0,\cdot)=\pi$, hence $\KL(P(0,\cdot)\|\pi)=0$ and $r(0)=0$.

\medskip
\noindent\textbf{States $i\ge 1$.}
For any $i\ge 1$:
\[
  \KL(P(i,\cdot)\|\pi)
  =\frac{1}{2}\log\frac{1/2}{\pi(i)}
  +\frac{1}{2}\log\frac{1/2}{\pi(0)}
  =\frac{1}{2}\log(n-1)+\frac{1}{2}\log 1
  =\frac{1}{2}\log(n-1),
\]
and $\log(1/\pi(i))=\log(2(n-1))$, so
$r(i)=\frac{\frac{1}{2}\log(n-1)}{\log(2(n-1))}$.
Since $r(0)=0$ and $r(i)>0$ for $i\ge 1$ when $n\ge 3$,
$M=r(i)$ for any $i\ge 1$. Finally,
$\bar r_\pi
=\pi(0)\cdot 0+\sum_{i=1}^{n-1}\pi(i)r(i)
=\frac{1}{2}\,M$,
so $L(P)=1/2$.
\end{proof}

\begin{remark}[Near-optimality on the star: numerical evidence]
Numerical optimization did not identify distributions outperforming point
masses for the lazy star for tested values up to $n=50$. This remains
numerical evidence, not an analytic proof that $\etaKL(P)=M$ holds for all
$n$. The pattern is consistent with Theorem~\ref{thm:central-bound}: the only
high-$r$ states are $1,\ldots,n-1$, all of which satisfy $r(i)=M$. Although
these states have distinct rows (differing in the self-loop state), mixing
over them introduces positive mutual information $I_\mu(X;Y)$ that appears to
offset the entropy inflation in the tested cases.
\end{remark}

\begin{remark}[Barbell graph]
\label{rmk:barbell}
The lazy random walk on the barbell graph (two $K_m$ cliques connected by a
single edge) has $L(P)\approx 0.96$--$0.99$ for $m=5$--$9$ in the tested
range. This is consistent with the interpretation that near-worst row
retention is spatially widespread, even though the conductance bottleneck is
localized at the bridge. Meanwhile, the best numerical lower bounds give
$\etaKL(P)/M\approx 2.3$--$2.4$. In these examples, the local quadratic value
from Proposition~\ref{prop:local-l2-lower} already accounts for the large gap,
so the computation should be read as evidence of a global/near-stationary
obstruction rather than as a certified optimizer description.
\end{remark}

\begin{remark}[Hypercube]
\label{rmk:hypercube}
The hypercube row in the table uses the standard \emph{random-coordinate} lazy
walk on $Q_d=\{0,1\}^d$: with probability~$1/2$ the walker stays put, and
otherwise a uniformly chosen coordinate is flipped, i.e.
\[
  P \;=\; \tfrac{1}{2}I \;+\; \frac{1}{2d}\sum_{i=1}^d Q_i,
\]
where $Q_i$ exchanges the $i$-th bit. This is an additive---not
multiplicative---combination of the single-coordinate flips, so it is
\emph{not} the tensor product of the lazy walk on $K_2$: the latter has
$P_{K_2}=\bigl(\begin{smallmatrix}1/2&1/2\\1/2&1/2\end{smallmatrix}\bigr)$,
whose rows already coincide with~$\pi_{K_2}$, so $M_{K_2}=0$ and the
tensor-product chain $P_{K_2}^{\otimes d}$ has every row equal to
$\pi_{Q_d}$ (a degenerate, one-step-mixing chain). This random-coordinate
walk, by contrast, is invariant under the action of the
hyperoctahedral group on $Q_d$, hence vertex-transitive, so
Proposition~\ref{prop:vertex-trans} gives $L(Q_d)=1$. A direct
row computation yields
\[
  M_{Q_d} \;=\; 1 \;-\; \frac{\log(4d)}{2d\log 2},
\]
which matches the table entries for $d=2,\ldots,7$
($|S|=4,\ldots,128$).
\end{remark}

\begin{remark}[Comparing $L(P)$ and $\gamma$]
\label{rmk:rho-vs-gap}
The following comparisons illustrate how $L(P)$ and the spectral gap~$\gamma$
capture different aspects of chain structure.

\medskip
\noindent\textbf{Same $L$, different $\gamma$.}
The complete graph $K_n$ and the lazy cycle $C_n$ both satisfy $L(P)=1$
(Propositions~\ref{prop:complete} and~\ref{prop:cycle}): In both chains,
each state exhibits the same local retention. Yet
$\gamma(K_n)=(n-2)/(n-1)=\Theta(1)$ while $\gamma(C_n)=\Theta(1/n^2)$.
Thus $L(P)$ does not reflect mixing speed: it identifies that the
contraction obstruction is spatially uniform in both cases, but says nothing
about how severe it is.

\medskip
\noindent\textbf{Same $\gamma$, different $L$.}
The reverse distinction can already be seen on three states. On
$S=\{0,1,2\}$, consider the two transition matrices
\[
  P_A=
  \begin{pmatrix}
  0 & 1/2 & 1/2\\
  1/2 & 0 & 1/2\\
  1/2 & 1/2 & 0
  \end{pmatrix},
  \qquad
  P_B=
  \begin{pmatrix}
  1/2 & 1/4 & 1/4\\
  1/2 & 1/2 & 0\\
  1/2 & 0 & 1/2
  \end{pmatrix}.
\]
The first chain moves uniformly to one of the other two states. In the
second chain, state~$0$ has transition row $(1/2,1/4,1/4)$, while states
$1$ and~$2$ have transition rows $(1/2,1/2,0)$ and $(1/2,0,1/2)$. Their
stationary distributions are
$\pi_A=(1/3,1/3,1/3)$ and
$\pi_B=(1/2,1/4,1/4)$, and their nontrivial eigenvalues are
$\{-1/2,-1/2\}$ and $\{1/2,0\}$, respectively. Hence, both chains have
absolute spectral gap $\gamma=1/2$. However, their row-retention profiles
are different. For $P_A$, each row is uniform on two states, so
$\KL(P_A(x,\cdot)\|\pi_A)=\log(3/2)$ and
$\KL(\delta_x\|\pi_A)=\log 3$. For $P_B$, the row from state~$0$
equals $\pi_B$, while for either state $i\in\{1,2\}$,
$\KL(P_B(i,\cdot)\|\pi_B)=\frac12\log 2$ and
$\KL(\delta_i\|\pi_B)=\log 4$. Writing $r_A$ and $r_B$ for the corresponding
retention profiles, this gives
\[
  r_A(x)=\frac{\log(3/2)}{\log 3}\quad\text{for all }x,
  \qquad
  r_B(0)=0,\qquad r_B(1)=r_B(2)=\frac14.
\]
Consequently $L(P_A)=1$ whereas $L(P_B)=1/2$. The spectral gap
assigns the same $L^2$ contraction scale to the two chains, but the retention
profile distinguishes the mechanism: the one-step KL obstruction is global
for $P_A$ and concentrated on states~$1$ and~$2$ for $P_B$.

\medskip
\noindent\textbf{Different $L$, both polynomial mixing.}
The lazy cycle $C_n$ has $L(P)=1$: the retention profile is constant,
and slowness is spread uniformly across all states. The lazy star chain of
Proposition~\ref{prop:star} has $L(P)=1/2$: state~$0$ retains nothing
($r(0)=0$, since $P(0,\cdot)=\pi$), while states $1,\ldots,n-1$ carry the
full row-level obstruction ($r(i)=M$). Both chains mix in polynomial time,
and $\gamma$ differs between them. What $\gamma$ does not directly reveal is
the spatial distribution of the contraction obstruction---whether slowness is
homogeneous or concentrated on a subset of states. This is the information
that $L(P)$ encodes.
\end{remark}

\section{Discussion}
\label{sec:discussion}

\subsection*{Summary of contributions}

The primary contributions are the framework itself, together with three new
results built on it:

\begin{enumerate}
\item \textbf{Convexity-gap identity and contraction-ratio decomposition}
  (Theorems~\ref{thm:convexity-gap}--\ref{thm:central-bound}). The gap
  between the row-averaged divergence and $\KL(\mu P\|\pi)$ equals the
  mutual information $I_\mu(X;Y)$, yielding a decomposition of
  $\KL(\mu P\|\pi)/\KL(\mu\|\pi)$ into entropy inflation minus a
  mutual-information penalty. This identifies the precise mechanism by
  which collective initial distributions can outperform point masses.

\item \textbf{Cheeger-type lower bound on $M$}
  (Theorem~\ref{thm:cheeger-M}). A geometric bottleneck at a set of small
  $\pi$-mass forces a high-retention row, connecting bottleneck geometry
  directly to the row-retention profile.

\item \textbf{Limits of $L(P)$ as a diagnostic}
  (Theorem~\ref{thm:counterexample}, Appendix~\ref{app:counterexample}). An
  explicit construction with $L(P)\to 0$ but $\etaKL/M\ge 2-o(1)$,
  isolating the cardinality (not the $\pi$-mass) of high-retention states
  as the decisive quantity.
\end{enumerate}

The framework also yields the following structural consequences, several of
which are direct applications of standard tools to the retention profile:

\begin{enumerate}
\setcounter{enumi}{3}
\item \textbf{Discrete-time entropy contraction equivalence}
  (Remark~\ref{rmk:mlsi-equivalence}). For finite reversible chains,
  $\etaKL(P)=1-\alpha_1(P)$, where $\alpha_1(P)$ is the discrete-time
  full-step entropy contraction gap; the row-based lower bound
  $\etaKL(P)\ge M$ is equivalent to evaluating the entropy contraction
  ratio at a point-mass test function (Remark~\ref{rmk:mlsi}).

\item \textbf{Optimal tail bounds}
  (Theorem~\ref{thm:ratio-characterization-general}). Markov and
  reverse-Markov bounds for the retention profile, optimal given only the
  mean and support constraints.

\item \textbf{Variance bound} (Proposition~\ref{prop:variance}). An
  instance of the Bhatia--Davis inequality~\cite{BhatiaDavis2000} applied
  to $r$, showing that $L(P)\approx 1$ forces approximate constancy of the
  retention profile and quantifies the remaining dispersion.

\item \textbf{Two-sided spectral bounds}
  (Theorems~\ref{thm:spectral-upper} and~\ref{thm:spectral-lower}). The
  mutual information $\MI(X_0;X_1)$ is sandwiched between
  $\frac{1}{2}\sum\lambda_i^2 - \frac{1}{6}\sum\pi(x)T_3(x)$ and
  $\sum\lambda_i^2$, with the gap controlled by the third-moment term
  $T_3$.

\item \textbf{Tensorization} (Appendix~\ref{sec:tensor}).
  For product chains, the local retention profile is a log-weighted average
  of the factor profiles, implying $M_P\le\max(M_1,M_2)$; in the identical
  uniform two-fold case, tensorization weakly increases $L(P)$.
\end{enumerate}

\subsection*{Connections to classical theory}

$L(P)$ does not replace the spectral gap, the Cheeger constant, or the
log-Sobolev constant; it sits next to them. The spectral upper bound
(Theorem~\ref{thm:spectral-upper}) controls $\bar r_\pi$ via the
$\KL\le\chi^2$ comparison and the eigenvalues of the symmetrization,
and Theorem~\ref{thm:spectral-lower} gives a partial converse with an
explicit cubic correction in the row entries. The Cheeger-type bound
(Theorem~\ref{thm:cheeger-M}) shows that a small-mass set with low
conductance forces a single high-$r$ row, so geometric bottlenecks
show up directly in the retention profile rather than only through the
spectrum.

\subsection*{Empirical relationship to classical invariants}

Numerical experiments on a test suite of $46$ chains across $12$
structured families (complete, lazy complete, cycle, hypercube, star,
path, barbell, lollipop, two-cluster, doubly stochastic, biased
birth--death, Ehrenfest) produced the following descriptive Spearman
rank correlations:
\[
  r_s(L,\gamma)\approx +0.003,
  \qquad
  r_s(L,h_P)\approx +0.10,
  \qquad
  r_s(L,t_{\mathrm{mix}})\approx -0.11.
\]
Within the tested chain families, no clear monotone relationship was visible.
These numbers should be interpreted as descriptive summaries of this particular
suite, not as inferential statistics: the examples are highly structured and
are not a random sample from a well-defined population of chains. The
correlations are computed by
\texttt{reproducibility/}\allowbreak\texttt{simulate\_rho\_}\allowbreak\texttt{spectral\_}\allowbreak\texttt{cheeger.py}
in the accompanying reproducibility bundle.

Analytically, Proposition~\ref{prop:vertex-trans} shows that every
vertex-transitive chain satisfies $L(P)=1$, whether $\gamma=\Theta(1)$ or
$\gamma=\Theta(1/n^2)$, so no functional relationship between
$L(P)$ and $\gamma$ can hold across this class.

\subsection*{Localization heuristic}

When $L(P)$ is small, slow contraction is concentrated on a few
high-$r$ states. Efficient initial distributions then tend to put their
mass on those states, which keeps $H(\mu)$ small and makes point masses
or near-point masses comparatively effective. When $L(P)$ is close to
$1$, near-worst retention is spread across much of the state space, and
collective distributions can exploit the geometry of the bottleneck;
the ratio $\etaKL(P)/M$ may then be substantially above~$1$, as the
barbell, path, cycle, and hypercube examples show.

\subsection*{Practical implications for MCMC design}

The quantities $M$, $L(P)$, and the profile $r(x)$ are computable
directly from the transition kernel, without an eigendecomposition,
and they isolate \emph{where} on the state space the contraction
obstruction lives. When $L(P)$ is small, the obstruction is carried by
a few high-$r$ states; locating these points suggests targeted remedies
(local moves near the bottleneck, reparameterization, state-dependent
proposals). When $L(P)$ is close to $1$, the obstruction is spread
out, and a purely local modification is unlikely to help: the regime
is the one where tempering, nonlocal proposals, or a different chain
construction tends to be needed.

The normalization in $r(x)=\KL(P(x,\cdot)\|\pi)/\log(1/\pi(x))$
divides each row's divergence by the self-information of starting at
$x$. Rare states have a large denominator, so $r(x)$ tends to be
small there even when the row itself is far from $\pi$, and the
$\pi$-average $\bar r_\pi$ further down-weights them. This is
intentional but not always what an MCMC user wants: if a low-mass
bottleneck is operationally important, the weighted variant
$L_\nu(P)$ from Remark~\ref{rmk:interpretation} with a non-stationary
$\nu$ is the appropriate object.

\paragraph{Estimating $L(P)$ when $\pi$ is unknown.}
In most MCMC applications, $\pi$ is known only up to a normalizing
constant, and the ratio $\KL(P(x,\cdot)\|\pi)/\log(1/\pi(x))$ does
\emph{not} cancel the normalizer. The statistic, therefore, has to be
plugged in once $\pi$ (or its normalizer) has been estimated, by
standard Monte Carlo or annealing methods; we make no robustness claim
about the resulting plug-in estimator.

In a purely empirical setting one may use the occupation measure
$\widehat\pi$ from a pilot trajectory $X_0,\dots,X_T$ and the
empirical row counts to form
$\widehat r(x)=\widehat{\KL}(P(x,\cdot)\|\widehat\pi)/\log(1/\widehat\pi(x))$.
This is consistent as $T\to\infty$ for any state visited infinitely
often, but unstable for states with $\widehat\pi(x)$ at the resolution
of $1/T$, so the maximum defining $\widehat M$ should be restricted
to states visited at least a chosen number of times. A concentration
analysis of $\widehat L(P)$ around $L(P)$ is left to future work.

\subsection*{Limitations and outlook}

The localization ratio $L(P)$ is a \emph{diagnostic} invariant, not a
universal control parameter for $\etaKL(P)/M$. The construction in
Appendix~\ref{app:counterexample} demonstrates that $L(P)\to 0$ does not by
itself force $\etaKL(P)/M\to 1$: the obstruction arises from the
cardinality, not merely the $\pi$-mass, of high-retention states. This
limitation should be kept in mind when interpreting $L(P)$ in practice.

The spectral lower bound (Theorem~\ref{thm:spectral-lower}) is a
conditional result: it provides a nontrivial converse to
Theorem~\ref{thm:spectral-upper} only when the third-moment correction
$T_3$ is controlled, which holds in the bounded-row regime but may fail
for sparse or near-absorbing chains. It should therefore be regarded as a
complementary tool for well-behaved chains rather than a universal estimate.

Finally, the numerical experiments are based on a limited (though diverse)
test suite; the observed lack of correlation between $L(P)$ and
classical invariants is suggestive but not definitive.

\subsection*{Open questions}

\begin{enumerate}
\item \emph{Characterizing when $\etaKL(P)=M$.}
  Appendix~\ref{app:counterexample} shows that $L(P)\to 0$ alone does
  not force $\etaKL(P)/M\to 1$. A natural refinement asks: for chains in
  which no two high-$r$ states share the same transition row,
  does $L(P)\to 0$ imply $\etaKL(P)/M\to 1$? More generally,
  what structural conditions on the set $\{x:r(x)\approx M\}$ guarantee
  that the point masses are near-optimal?

\item \emph{Spectral lower bound for restricted classes.}
  Does $\bar r_\pi \ge (1/2-o(1))\sum_{i\ge 2}\lambda_i^2/\log n$ hold for
  all lazy random walks on regular graphs? By
  Corollary~\ref{cor:spectral-sandwich}, this reduces to controlling
  $\sum_x\pi(x)T_3(x)$, which is bounded for chains with bounded row entries.

\item \emph{Continuous extension.}
  Can the localization ratio be extended to continuous state spaces, replacing
  the maximum with an essential supremum? For diffusion processes, the analogue
  of $r(x)$ involves the local generator applied to the entropy functional.
  As a heuristic, for a continuous-time chain with generator
  $\mathcal{L}=P-I$ on a finite state space, entropy dissipation is
  governed by the Dirichlet form
  $\mathcal{E}(f,\log f) = \sum_{x,y}\pi(x)P(x,y)(f(y)-f(x))
  (\log f(y)-\log f(x))$. A local retention analog would evaluate this
  dissipation at point-mass test functions, connecting the present framework
  to the entropy--energy landscape studied in PDEs and diffusion theory.
  We leave the precise formulation and the extension to general state
  spaces (where measurability and essential-supremum issues arise) as future
  work.
\end{enumerate}

\section*{Declarations}
\addcontentsline{toc}{section}{Declarations}

\textbf{Funding:} The author declares that no funds, grants, or other
support were received during the preparation of this manuscript.

\noindent \textbf{Conflicts of Interest:} The author has no relevant financial or
non-financial interests to disclose.

\noindent \textbf{Code and Data Availability:} No external datasets were used.
The accompanying reproducibility files are present in the public repository:
\href{https://github.com/SRJ00/Retention-Profile-in-MC}
{Retention Profiles and KL Contraction Bounds in Finite Markov Chains.} 
The repository contains code files, a self-contained audit script, result tables, 
and a requirements file. The script constructs the finite transition matrices described in the text,
computes the row-retention quantities, evaluates point-mass and structured
candidates explicitly, calibrates near-stationary candidates using the local
quadratic bound, and performs a multi-start softmax search. The audit was run
with fixed seed $20260511$ in Python 3.12.13 using NumPy 2.3.5. These numerical
outputs are lower-bound estimates for a nonconvex variational problem unless
an analytic argument is stated separately.

\noindent \textbf{Author Contributions:} Saurav Jadhav is the sole author and is
responsible for all conceptualization, formal analysis, and writing.
\appendix

\section{Discrete-Time Contraction and Exponential Decay}
\label{app:contraction}

\begin{proposition}[One-step contraction]
\label{prop:onestep}
For every probability distribution $\mu$ on $S$,
\[
  \KL(\mu P\|\pi)\le \etaKL(P)\,\KL(\mu\|\pi).
\]
\end{proposition}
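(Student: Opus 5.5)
The plan is to split into two cases according to whether $\mu=\pi$, and otherwise to read the inequality directly off the definition of $\etaKL(P)$ as a supremum. No earlier result beyond the definition and the stationarity $\pi P=\pi$ is needed.

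\emph{Case $\mu=\pi$.} Here $\mu P=\pi P=\pi$, so the left-hand side is $\KL(\pi\|\pi)=0$. The right-hand side is $\etaKL(P)\cdot 0=0$, and the inequality holds with equality. This case has to be treated separately because the definition of $\etaKL(P)$ excludes $\mu=\pi$, where the ratio would be $0/0$.

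\emph{Case $\mu\neq\pi$.} Because $\pi$ has full support on the finite set $S$, $\KL(\mu\|\pi)$ is finite. It is also strictly positive, since KL divergence vanishes only when the two measures coincide. Hence the ratio $\KL(\mu P\|\pi)/\KL(\mu\|\pi)$ is a well-defined real number and is one of the quantities over which the supremum defining $\etaKL(P)$ is taken. It follows that
\[
\frac{\KL(\mu P\|\pi)}{\KL(\mu\|\pi)}\le \etaKL(P).
\]
Multiplying through by the positive finite number $\KL(\mu\|\pi)$ gives the claim.

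There is no genuine obstacle. The only points needing care are the degenerate case $\mu=\pi$ and the finiteness and positivity of the denominator, both of which rest on the full-support assumption and on $\pi P=\pi$.
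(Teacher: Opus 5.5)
Your proof is correct and matches the paper's, which simply says the inequality is immediate from the definition of $\etaKL(P)$. Your separate handling of $\mu=\pi$ (where $\etaKL(P)\le 1$ is finite, so the right side is $0$) and your check that $\KL(\mu\|\pi)$ is finite and positive for $\mu\neq\pi$ just spell out what the paper leaves implicit.
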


\begin{proof}
Immediate from the definition of $\etaKL(P)$.
\end{proof}

\begin{proposition}[Multi-step contraction]
\label{prop:multi-step}
For every integer $t\ge 1$ and every distribution $\mu$ on $S$,
\[
  \KL(\mu P^t\|\pi)\le \etaKL(P)^t\,\KL(\mu\|\pi).
\]
\end{proposition}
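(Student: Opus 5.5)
The proof goes by induction on $t$, iterating the one-step bound of Proposition~\ref{prop:onestep}. The key observation is that Proposition~\ref{prop:onestep} holds for \emph{every} probability distribution on $S$, not only for the initial law. So it may be applied to $\mu P^{t-1}$, which is again a probability distribution on $S$ because $P$ is a Markov kernel.

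The base case $t=1$ is exactly Proposition~\ref{prop:onestep}. For the inductive step, assume $\KL(\mu P^{t-1}\|\pi)\le\etaKL(P)^{t-1}\KL(\mu\|\pi)$. Set $\nu:=\mu P^{t-1}$, so that $\mu P^t=\nu P$. Then
\[
  \KL(\mu P^t\|\pi)=\KL(\nu P\|\pi)\le\etaKL(P)\,\KL(\nu\|\pi)
  \le\etaKL(P)\cdot\etaKL(P)^{t-1}\KL(\mu\|\pi).
\]
The last inequality uses the inductive hypothesis together with $\etaKL(P)\ge0$, which guarantees that multiplying by $\etaKL(P)$ preserves the inequality.

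The only care needed is the degenerate case $\nu=\pi$, where the ratio defining $\etaKL(P)$ is undefined. Here stationarity gives $\nu P=\pi$, so both sides of the one-step bound vanish and the inequality holds trivially. This is already built into Proposition~\ref{prop:onestep}, which is stated for every $\mu$; I would nonetheless note it explicitly. Similarly, when $\mu=\pi$ both sides of the claimed bound are zero.

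I do not expect a genuine obstacle. The one point to emphasize is that the supremum defining $\etaKL(P)$ ranges over all initial distributions, so the same constant applies at every step; no reversibility or ergodicity is needed.
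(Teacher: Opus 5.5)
Your proposal is correct and matches the paper's proof: both iterate the one-step bound of Proposition~\ref{prop:onestep}, applied to the distributions $\mu P^{t-1},\mu P^{t-2},\ldots$. Your explicit checks that $\etaKL(P)\ge 0$ preserves the inequality and that the degenerate case $\nu=\pi$ is trivial are details the paper's telescoped chain leaves implicit.
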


\begin{proof}
Apply Proposition~\ref{prop:onestep} iteratively:
\[
  \KL(\mu P^t\|\pi)\le \etaKL(P)\KL(\mu P^{t-1}\|\pi)
  \le \etaKL(P)^2\KL(\mu P^{t-2}\|\pi)
  \le \cdots \le \etaKL(P)^t\KL(\mu\|\pi).
\]
\end{proof}

\begin{corollary}[Exponential form]
\label{cor:exp}
If $\etaKL(P)>0$, then for every $t\ge 1$,
\[
  \KL(\mu P^t\|\pi)\le e^{-\rateKL(P)t}\,\KL(\mu\|\pi).
\]
\end{corollary}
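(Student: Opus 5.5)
The statement follows from Proposition~\ref{prop:multi-step} combined with the definition $\rateKL(P)=-\log\etaKL(P)$; no further ingredients are needed.

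\emph{Step 1 (rewrite the constant).} Since $\etaKL(P)>0$, the rate $\rateKL(P)=-\log\etaKL(P)$ is a well-defined real number. Because $\etaKL(P)\le 1$ (data processing), it is also non-negative. By definition,
\[
  e^{-\rateKL(P)t}=\bigl(e^{\log\etaKL(P)}\bigr)^t=\etaKL(P)^t
\]
for every integer $t\ge1$.

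\emph{Step 2 (invoke multi-step contraction).} Proposition~\ref{prop:multi-step} gives $\KL(\mu P^t\|\pi)\le\etaKL(P)^t\KL(\mu\|\pi)$ for every $t\ge1$ and every distribution $\mu$. Substituting the identity from Step~1 yields the claim.

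\emph{Degenerate cases.} If $\mu=\pi$, both sides vanish. If $\KL(\mu\|\pi)$ is finite, the inequality is meaningful as written; this always holds here because $\pi$ has full support. The positivity hypothesis is needed only so that $\rateKL(P)$ is finite. When $\etaKL(P)=0$ one could set $\rateKL=\infty$, and the bound would still hold with right-hand side $0$, but that case is excluded by the statement.

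There is no real obstacle. The only point requiring care is that Proposition~\ref{prop:onestep} is applied to the distributions $\mu P^{k}$, which is legitimate because the supremum defining $\etaKL(P)$ ranges over all $\mu$, and the case $\mu P^k=\pi$ holds trivially.
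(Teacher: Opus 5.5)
Your proposal is correct and follows the paper's own proof: invoke Proposition~\ref{prop:multi-step} and rewrite $\etaKL(P)^t=e^{t\log\etaKL(P)}=e^{-\rateKL(P)t}$. Your additional remarks on the degenerate cases ($\mu=\pi$, finiteness of $\KL(\mu\|\pi)$, and applying the one-step bound to $\mu P^k$) are accurate but not needed.
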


\begin{proof}
$\etaKL(P)^t=e^{t\log\etaKL(P)}=e^{-\rateKL(P)t}$.
\end{proof}

\section{Taylor Remainder Estimate}
\label{app:taylor}

\begin{lemma}
\label{lem:taylor-remainder}
Let $\varphi(u)=(1+u)\log(1+u)$ for $u>-1$. Define
$R(u) = \varphi(u) - u - u^2/2$. Then
$R(u) \ge -u^3/6$ for all $u > -1$; in particular,
$R(u)\ge -|u|^3/6$.
\end{lemma}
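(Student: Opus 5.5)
To prove $R(u)\ge -u^3/6$ on $(-1,\infty)$, I will study the auxiliary function
\[
  D(u):=R(u)+\frac{u^3}{6}=(1+u)\log(1+u)-u-\frac{u^2}{2}+\frac{u^3}{6}
\]
and show $D\ge 0$ by differentiating. Since the first terms of the Taylor expansion cancel, $D(0)=0$.

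The derivatives are
\[
  D'(u)=\log(1+u)-u+\frac{u^2}{2}, \qquad D''(u)=\frac{1}{1+u}-1+u=\frac{u^2}{1+u}\ \ge 0 .
\]
So $D'$ is nondecreasing on $(-1,\infty)$, and $D'(0)=0$. Hence $D'\le 0$ on $(-1,0]$ and $D'\ge 0$ on $[0,\infty)$. This makes $D$ nonincreasing on $(-1,0]$ and nondecreasing on $[0,\infty)$, so $u=0$ is a global minimum with $D(0)=0$. Therefore $D(u)\ge 0$ for all $u>-1$, which is exactly $R(u)\ge -u^3/6$.

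The "in particular" clause follows from $-u^3/6\ge -|u|^3/6$, which holds because $u^3\le |u|^3$ for every real $u$.

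No step is a real obstacle. The only care needed is at the endpoint behaviour as $u\downarrow -1$: $(1+u)\log(1+u)\to 0$, so $D$ stays continuous and finite on the open interval. The argument only uses differentiability on $(-1,\infty)$, so no boundary analysis is actually needed. I would also verify the sign computation for $D''$ explicitly, since $1/(1+u)-1+u=\bigl(1-(1+u)+u(1+u)\bigr)/(1+u)=u^2/(1+u)$ is the crux of the argument.
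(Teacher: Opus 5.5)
Your proposal is correct and takes exactly the paper's approach. The paper likewise sets $h(u)=R(u)+u^3/6$, computes $h''(u)=u^2/(1+u)\ge 0$, and uses $h'(0)=0$ to conclude that $u=0$ is the global minimum with $h(0)=0$.
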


\begin{proof}
Define $h(u) := R(u) + u^3/6$. Compute:
\begin{align*}
  h'(u) &= \log(1+u) - u + \tfrac{u^2}{2}, \\
  h''(u) &= \frac{u^2}{1+u} \ge 0 \quad\text{for all }u>-1.
\end{align*}
So $h'$ is non-decreasing; since $h'(0)=0$, we have $h'(u)\le 0$ for $u<0$
and $h'(u)\ge 0$ for $u>0$, meaning $h$ achieves its global minimum at
$u=0$, where $h(0)=0$. Hence $h(u)\ge 0$ for all $u>-1$, i.e.,
$R(u)\ge -u^3/6$. Since $-u^3/6\ge -|u|^3/6$, the second form follows.
\end{proof}

\section{Entropy Contraction Computation for Point-Mass Test Functions}
\label{app:mlsi-detail}

We verify $\Ent_\pi(Pf_x) = \KL(P(x,\cdot)\|\pi)$ for
$f_x(y) = \delta_{xy}/\pi(x)$ when $P$ is reversible.

The operator $P$ acts on functions via 
\[
  (Pf)(y) = \sum_z P(y,z)f(z).
\]
For $f_x(z) = \delta_{zx}/\pi(x)$, we have $(Pf_x)(y) = P(y,x)/\pi(x)$. By reversibility, $\pi(y)P(y,x) = \pi(x)P(x,y)$, so
\[
  (Pf_x)(y) = P(x,y)/\pi(y).
\]
Because $\sum_y (Pf_x)(y)\pi(y) = \sum_y P(x,y) = 1$,
\begin{align*}
  \Ent_\pi(Pf_x)
  &= \sum_y (Pf_x)(y)\log(Pf_x)(y)\,\pi(y) \\
  &= \sum_y P(x,y)\log\frac{P(x,y)}{\pi(y)} \\
  &= \KL(P(x,\cdot)\|\pi).
\end{align*}

\section{Formal Construction: Limits of \texorpdfstring{$L(P)$}{L(P)} as a Predictor}
\label{app:counterexample}

We formalize the construction showing that $L(P_n)\to 0$ does not imply
$\etaKL(P_n)/M_n\to 1$. The mechanism relies on the fact that $L(P)$ controls the total
$\pi$-mass of high-$r$ states, but not their cardinality. When many low-mass states share
identical transition rows, the mutual information penalty $I_\mu(X;Y)$ is forced to zero,
allowing the entropy inflation to drive the contraction ratio.

\begin{theorem}
\label{thm:counterexample}
For any $\varepsilon\in(0,1/2)$ there exists a family of reversible Markov
chains $P_{\delta,N}$ (parameterized by $\delta\in(0,1)$ and integer
$N\ge 1$) on a finite state space, with
$M_{\delta,N}:=M(P_{\delta,N})$, such that
\[
  \lim_{N\to\infty}\;\lim_{\delta\to 0}\;L(P_{\delta,N})\;\le\;2\varepsilon,
  \qquad
  \lim_{N\to\infty}\;\lim_{\delta\to 0}\;\frac{\etaKL(P_{\delta,N})}{M_{\delta,N}}\;\ge\;2.
\]
Consequently, $L(P)\to 0$ does not universally imply
$\etaKL(P)/M\to 1$.
\end{theorem}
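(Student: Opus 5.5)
The plan is to realize the mechanism of Remark~\ref{rmk:rho-counterexample} explicitly. I will build a reversible chain whose state space splits into two blocks. The first is a block $B$ of $N$ states carrying total stationary mass $\beta$, with every row in $B$ identical. The second is a complement block $C$ of $K$ states (take $K=N$) carrying mass $1-\beta$, again with identical rows. The coupling parameter $\delta$ lets the blocks communicate. I will test $\etaKL$ at the uniform distribution on $B$. This is where Theorem~\ref{thm:convexity-gap} gives $I_\mu(X;Y)=0$, because all rows in the support coincide, so $\KL(\mu P\|\pi)$ equals the common row divergence exactly.

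\emph{Construction.} Let $\pi$ be uniform on $B$ with mass $\beta/N$ per state, and uniform on $C$ with mass $(1-\beta)/N$ per state. For $x\in B$ set $P(x,\cdot)=(1-\delta)\,\mathrm{Unif}(B)+\delta\,\mathrm{Unif}(C)$. For $z\in C$ set $P(z,\cdot)=(1-\delta')\,\mathrm{Unif}(C)+\delta'\,\mathrm{Unif}(B)$, where $\delta'=\delta\beta/(1-\beta)$. Detailed balance between $B$ and $C$ is then a one-line check: $\frac{\beta}{N}\cdot\frac{\delta}{N}=\frac{1-\beta}{N}\cdot\frac{\delta'}{N}$. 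Within each block, detailed balance is immediate because the block is uniform. Hence $P$ is reversible with respect to $\pi$, and $\pi$ has full support.

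\emph{Limits as $\delta\to0$.} The rows in $B$ tend to $\mathrm{Unif}(B)$, so their divergence to $\pi$ tends to $\log(1/\beta)$. This gives $r_B\to\log(1/\beta)/\log(N/\beta)$. Similarly, $r_C\to\log(1/(1-\beta))/\log(N/(1-\beta))$. Since $\beta<1/2$, we have $r_C<r_B$ for large $N$, so $M\to r_B$. The test measure $\mu=\mathrm{Unif}(B)$ has $\KL(\mu\|\pi)=\log(1/\beta)$ and $\KL(\mu P\|\pi)=\KL(P(x,\cdot)\|\pi)\to\log(1/\beta)$. Therefore
\[
  \frac{\etaKL}{M}\;\ge\;\frac{\log(N/\beta)}{\log(1/\beta)}+o(1),
\]
which tends to $\infty\ge2$ as $N\to\infty$. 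The localization ratio satisfies
\[
  L\;\to\;\beta+(1-\beta)\frac{r_C}{r_B}.
\]
As $N\to\infty$ the ratio $\log(N/\beta)/\log(N/(1-\beta))$ tends to $1$, so $L\to\beta+(1-\beta)\log\frac{1}{1-\beta}\big/\log\frac1\beta$.

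\emph{Main obstacle: choosing $\beta$.} The delicate step is tuning $\beta$ as a function of $\varepsilon$ so that the last expression is at most $2\varepsilon$. Using $\log\frac{1}{1-\beta}\le 2\beta$ for $\beta\le1/2$, the expression is at most $\beta\bigl(1+2/\log(1/\beta)\bigr)$. So I will take $\beta=\varepsilon/2$ when $2/\log(2/\varepsilon)\le 3$, which holds for all $\varepsilon<1/2$ up to small constants. If that fails, I will shrink $\beta$ further, say $\beta=\varepsilon/4$. The $\eta/M$ bound is unaffected by any such choice, since it tends to infinity for every fixed $\beta$.

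The remaining care points are routine. The iterated limits must be taken in the stated order, $\delta\to0$ first and then $N\to\infty$; the continuity of $\KL$ in $\delta$ on a fixed finite space justifies this. One must also confirm that $M$ is attained in $B$ uniformly along the limit.
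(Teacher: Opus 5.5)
Your construction is correct and uses the same mechanism as the paper: a reversible two-block chain whose low-mass block has identical rows, tested at the uniform law on that block. There $I_\mu(X;Y)=0$ by Theorem~\ref{thm:convexity-gap}, so the contraction ratio tends to $1$. The difference is in the scaling, and it changes what the example shows.

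\emph{The paper's scaling.} The paper keeps the high-retention block at fixed size $k=\lceil 1/\varepsilon\rceil$ and lets only the spectator block grow. The spectator retention then vanishes, while $M\to\log(1/\varepsilon)/\log(k/\varepsilon)\approx 1/2$ stays bounded away from zero. So $L\to\varepsilon$ with no tuning, and $\etaKL/M\ge 1+\log k/\log(1/\varepsilon)\ge 2$ follows from $k\ge 1/\varepsilon$.

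\emph{Your scaling.} You give both blocks $N$ states, so your bound reads $1+\log N/\log(1/\beta)$, the same formula with $N$ in place of $k$. This yields the stronger conclusion $\etaKL/M\to\infty$. However, it comes from $M=\Theta(1/\log N)\to 0$: every retention value vanishes in the limit. Moreover, the spectator retention $r_C$ is of the same order as $M$. So $L$ tends to $\beta+(1-\beta)\log\frac{1}{1-\beta}/\log\frac{1}{\beta}$ rather than to $\beta$, which forces your extra choice of $\beta$.

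\emph{The choice of $\beta$ is not delicate.} With $\beta=\varepsilon/2$, your condition $2/\log(2/\varepsilon)\le 3$ holds for every $\varepsilon\in(0,1/2)$, so the fallback $\beta=\varepsilon/4$ is unnecessary. In fact even $\beta=\varepsilon$ works, because $(1-\beta)\log\frac{1}{1-\beta}\le\beta\log\frac{1}{\beta}$ on $(0,1/2)$. Also, $r_C<r_B$ holds for all $N\ge 2$, not just for large $N$.

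\emph{What each version buys.} Your version gives an unbounded ratio from a symmetric construction. The paper's version gives a non-degenerate counterexample with $M$ of order one. That more cleanly isolates the cardinality of the identical-row block, rather than a vanishing $M$, as the source of the gap $\etaKL/M>1$.
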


\begin{remark}[Nature of the limit]
The construction is a three-parameter family in $(\varepsilon,\delta,N)$
with $N=|B|$, and the conclusion is read as an iterated limit
$\lim_{N\to\infty}\lim_{\delta\to 0}$. For each $\varepsilon\in(0,1/2)$
the inner limit $\delta\to 0$ produces a chain on $A\cup B$ whose
block-to-block transitions vanish, so the limiting kernel is reducible;
the outer limit $N\to\infty$ then sends the per-state mass
$\pi(b)=(1-\varepsilon)/N$ to zero, which forces the retention $r(b)$
on the spectator block to zero (cf.\ the calculation in the proof).
What the result rules out is the implication
$L(P_n)\to 0\Rightarrow\etaKL(P_n)/M_n\to 1$ as a structural statement,
not at fixed connectivity. The obstruction it isolates is the cardinality
of high-$r$ states (here $|A|=k\approx 1/\varepsilon$), independent of
their total $\pi$-mass.
\end{remark}

\begin{proof}
Fix $\varepsilon\in(0,1/2)$, set $k=\lceil 1/\varepsilon\rceil$, and let
$\delta\in(0,1)$, $N\ge 1$ be parameters. Take a state space
$S=A\cup B$ with $|A|=k$ and $|B|=N$, and define
$\pi(a)=\varepsilon/k$ for $a\in A$ and $\pi(b)=(1-\varepsilon)/N$ for
$b\in B$.
Define the transition rows for states $a\in A$ to be identical:
\[
  P(a,x)=
  \begin{cases}
    (1-\delta)/k & x\in A,\\[2pt]
    \delta/N & x\in B,
  \end{cases}
\]
and for $b\in B$:
\[
  P(b,x)=
  \begin{cases}
    \dfrac{\varepsilon\delta}{k(1-\varepsilon)} & x\in A,\\[6pt]
    \dfrac{1-\varepsilon-\varepsilon\delta}{N(1-\varepsilon)} & x\in B.
  \end{cases}
\]
Rows sum to $1$ and $\pi P=\pi$ by inspection. Reversibility holds: for
$a\in A$, $b\in B$,
\[
  \pi(a)P(a,b)=\frac{\varepsilon\delta}{kN}=\pi(b)P(b,a),
\]
and symmetry within $A$ and within $B$ is immediate.

Because all states in $A$ share the same transition row
$q_\delta:=P(a,\cdot)$, they have a common retention value $r(a)$.
Both $q_\delta$ and $\pi$ are uniform within $A$ and within $B$, so the
binary partition $\{A,A^c\}$ captures the full divergence:
\[
  \KL(q_\delta\|\pi) = (1-\delta)\log\frac{1-\delta}{\varepsilon}
  +\delta\log\frac{\delta}{1-\varepsilon}
  \xrightarrow{\delta\to 0}\log(1/\varepsilon).
\]
Thus, for $a \in A$, the retention profile satisfies
\[
  r(a) = \frac{\KL(q_\delta \| \pi)}{\log(k/\varepsilon)} \xrightarrow{\delta \to 0} \frac{\log(1/\varepsilon)}{\log(k/\varepsilon)}.
\]
For $b\in B$, as $\delta\to 0$, $P(b,\cdot)$ converges to the uniform
distribution on $B$ (probability $1/N$). Direct computation gives
\[
  \KL(P(b,\cdot)\|\pi)\xrightarrow{\delta\to 0}
  \sum_{x\in B}\frac{1}{N}\log\frac{1/N}{(1-\varepsilon)/N}
  =\log\!\left(\frac{1}{1-\varepsilon}\right),
\]
so
\[
  r(b)\xrightarrow{\delta\to 0}
  \frac{\log(1/(1-\varepsilon))}{\log(N/(1-\varepsilon))}
  \xrightarrow{N\to\infty}0.
\]
Set
\[
  m_A:=\frac{\log(1/\varepsilon)}{\log(k/\varepsilon)},
  \qquad
  s_N:=\frac{\log(1/(1-\varepsilon))}{\log(N/(1-\varepsilon))}.
\]
Then $m_A>0$ depends only on $\varepsilon$, while $s_N\to0$ as
$N\to\infty$. Hence, for all sufficiently large $N$, the
$\delta\to0$ limit of $M_{\delta,N}$ is $m_A$, and
\[
  \lim_{N\to\infty}\lim_{\delta\to0}L(P_{\delta,N})
  =
  \lim_{N\to\infty}
  \frac{\varepsilon\,m_A+(1-\varepsilon)s_N}{m_A}
  =
  \varepsilon
  \le 2\varepsilon .
\]

Now, consider the uniform initial distribution over the bottleneck: $\mu = \mathrm{Unif}(A)$.
We have $\KL(\mu \| \pi) = \log(1/\varepsilon)$ and $H(\mu) = \log k$.
Because all rows in $A$ are identical, the marginal distribution of one step is exactly $q_\delta$, which means that $\KL(P(a, \cdot) \| \mu P) = 0$ for all $a \in A$. Thus, the mutual information penalty is $I_\mu(X;Y) = 0$.
By Theorem~\ref{thm:convexity-gap},
\[
  \KL(\mu P \| \pi) = \sum_{a \in A} \mu(a) \KL(P(a, \cdot) \| \pi) - I_\mu(X;Y) = \KL(q_\delta \| \pi).
\]
The contraction ratio for this specific $\mu$ is
\[
  \frac{\KL(\mu P \| \pi)}{\KL(\mu \| \pi)} = \frac{\KL(q_\delta \| \pi)}{\log(1/\varepsilon)} \xrightarrow{\delta \to 0} 1.
\]
Since $\etaKL(P)$ is the supremum over all distributions, $\etaKL(P) \ge \KL(\mu P \| \pi) / \KL(\mu \| \pi)$.
Dividing by $M$, we obtain the asymptotic lower bound as follows:
\[
  \lim_{\delta \to 0} \frac{\etaKL(P)}{M} \ge \frac{1}{\lim_{\delta \to 0} M} = \frac{\log(k/\varepsilon)}{\log(1/\varepsilon)} = 1 + \frac{\log k}{\log(1/\varepsilon)}.
\]
Because we chose $k=\lceil 1/\varepsilon\rceil\ge 1/\varepsilon$, we have
$\log k\ge\log(1/\varepsilon)$, hence
\[
  \lim_{\delta\to 0}\frac{\etaKL(P)}{M}
  \;\ge\; 1+\frac{\log k}{\log(1/\varepsilon)}
  \;\ge\; 2.
\]
The ceiling slack $\log\lceil 1/\varepsilon\rceil-\log(1/\varepsilon)\in[0,\varepsilon]$
goes in the favourable direction; in particular, the bound is independent
of $\varepsilon$. Sending $\varepsilon\to 0$ then drives $L(P)\to 0$
while keeping $\etaKL(P)/M\ge 2$, completing the proof.
\end{proof}

\section{Tensorization}
\label{sec:tensor}

\begin{proposition}[Product chain retention]
\label{prop:tensor}
Let $P_1$ and $P_2$ be Markov kernels on finite state spaces $S_1, S_2$ with
stationary distributions $\pi_1, \pi_2$ respectively. Consider the product
chain $P = P_1\otimes P_2$ on $S_1\times S_2$ with $\pi=\pi_1\otimes\pi_2$.
Then, for each $(x_1,x_2)\in S_1\times S_2$:
\[
  r_P(x_1,x_2)
  = \frac{\log(1/\pi_1(x_1))\cdot r_1(x_1)
  + \log(1/\pi_2(x_2))\cdot r_2(x_2)}
  {\log(1/\pi_1(x_1)) + \log(1/\pi_2(x_2))},
\]
where $r_1, r_2$ are the retention profiles of $P_1, P_2$ respectively.
\end{proposition}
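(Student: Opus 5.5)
The plan is to compute the numerator and denominator of $r_P(x_1,x_2)$ separately and show that each splits additively over the two factors. Then the displayed formula follows by rewriting each factor's divergence as $\log(1/\pi_i(x_i))\,r_i(x_i)$. Throughout, the product chain is taken to have rows $P((x_1,x_2),\cdot)=P_1(x_1,\cdot)\otimes P_2(x_2,\cdot)$, and $\pi=\pi_1\otimes\pi_2$ is stationary for it. Stationarity is checked coordinatewise: $(\pi_1\otimes\pi_2)(P_1\otimes P_2)=(\pi_1P_1)\otimes(\pi_2P_2)$. Full support of $\pi$ is inherited from that of $\pi_1$ and $\pi_2$, so $r_P$ is well defined.

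\emph{Denominator.} Since $\pi(x_1,x_2)=\pi_1(x_1)\pi_2(x_2)$, we have
\[
\log\frac{1}{\pi(x_1,x_2)}=\log\frac{1}{\pi_1(x_1)}+\log\frac{1}{\pi_2(x_2)},
\]
which is the denominator in the statement.

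\emph{Numerator.} Here I will use additivity of KL divergence for product measures:
\[
\KL(\mu_1\otimes\mu_2\,\|\,\nu_1\otimes\nu_2)=\KL(\mu_1\|\nu_1)+\KL(\mu_2\|\nu_2).
\]
This is proved by writing the log of the product density ratio as a sum of two logs and summing out the other coordinate, which has total mass $1$. Applying it with $\mu_i=P_i(x_i,\cdot)$ and $\nu_i=\pi_i$ gives
\[
\KL\bigl(P((x_1,x_2),\cdot)\,\|\,\pi\bigr)=\KL(P_1(x_1,\cdot)\|\pi_1)+\KL(P_2(x_2,\cdot)\|\pi_2).
\]
By Definition~\ref{def:r}, each term equals $\log(1/\pi_i(x_i))\,r_i(x_i)$, since $\pi_i(x_i)\in(0,1)$. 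Dividing by the denominator then gives the claimed log-weighted average.

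No step is a genuine obstacle. The only point needing care is fixing the convention for the product chain: the tensor product of kernels rather than the random-coordinate update of Remark~\ref{rmk:hypercube}. For the latter the rows do not factorize and the identity fails, so I will state the convention explicitly. As a by-product, the weights are nonnegative and sum to $1$, so $r_P$ is a convex combination of $r_1(x_1)$ and $r_2(x_2)$. This immediately yields $M_P\le\max(M_1,M_2)$, as announced in the Discussion.
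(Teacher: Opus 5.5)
Your proposal is correct and follows the paper's own proof: you split $\KL$ additively over the product rows and $\log(1/\pi)$ additively over the product measure, then read off the log-weighted average. Your explicit checks of stationarity, full support, and the tensor-product (rather than random-coordinate) convention are sensible additions that the paper leaves implicit.
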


\begin{proof}
By independence,
\[
  \KL(P_1(x_1,\cdot)\otimes P_2(x_2,\cdot)\|\pi_1\otimes\pi_2)
  = \KL(P_1(x_1,\cdot)\|\pi_1) + \KL(P_2(x_2,\cdot)\|\pi_2),
\]
and
$\log(1/\pi(x_1,x_2)) = \log(1/\pi_1(x_1)) + \log(1/\pi_2(x_2))$.
The retention of the product chain at $(x_1,x_2)$ is the ratio of these,
which is a weighted average of $r_1(x_1)$ and $r_2(x_2)$ with weights
$\log(1/\pi_j(x_j))$.
\end{proof}

\begin{corollary}[Tensorization bounds]
\label{cor:tensor-bounds}
Under the hypotheses of Proposition~\ref{prop:tensor}:
\begin{enumerate}
  \item $M_P \le \max(M_1, M_2)$.

  \item If both $\pi_1, \pi_2$ are uniform on $n_1, n_2$ states respectively:
  \[
    \bar{r}_P = \frac{\bar{r}_1 \log n_1 + \bar{r}_2 \log n_2}{\log(n_1 n_2)}.
  \]

  \item If $P_1 = P_2$ with uniform $\pi$, then $\bar{r}_P = \bar{r}_1$ and $L(P) \ge L(P_1)$.
\end{enumerate}
\end{corollary}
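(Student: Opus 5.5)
The plan is to derive all three items directly from the weighted-average formula of Proposition~\ref{prop:tensor}. The key observation is that the weights $w_j:=\log(1/\pi_j(x_j))$ are strictly positive. This holds because each $\pi_j$ has full support on a space with at least two states, so $0<\pi_j(x_j)<1$. Hence $r_P(x_1,x_2)$ is a genuine convex combination of $r_1(x_1)$ and $r_2(x_2)$.

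\emph{Item 1.} A convex combination is at most the larger of its two terms. So for every $(x_1,x_2)$ we get $r_P(x_1,x_2)\le\max\{r_1(x_1),r_2(x_2)\}\le\max(M_1,M_2)$. Taking the maximum over $(x_1,x_2)$ gives $M_P\le\max(M_1,M_2)$.

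\emph{Item 2.} If $\pi_j$ is uniform on $n_j$ states, the weights become the constants $\log n_1$ and $\log n_2$. The denominator is then $\log(n_1n_2)$ for every state, so
\[
  r_P(x_1,x_2)=\frac{r_1(x_1)\log n_1+r_2(x_2)\log n_2}{\log(n_1n_2)} .
\]
This is affine in $(r_1(x_1),r_2(x_2))$ with constant coefficients. Taking the expectation under $\pi=\pi_1\otimes\pi_2$, linearity and the fact that the marginals of $\pi$ are $\pi_1$ and $\pi_2$ give $\E_\pi[r_1(X_1)]=\bar r_1$ and $\E_\pi[r_2(X_2)]=\bar r_2$. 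This yields the stated formula for $\bar r_P$.

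\emph{Item 3.} With $P_1=P_2$ and $n_1=n_2=n$, item 2 gives $\bar r_P=\bar r_1$ immediately, and the profile reduces to $r_P(x_1,x_2)=\tfrac12\bigl(r_1(x_1)+r_1(x_2)\bigr)$. For the maximum, item 1 gives $M_P\le M_1$. Choosing a maximizer $x^*$ of $r_1$ and evaluating at the diagonal point $(x^*,x^*)$ gives $r_P(x^*,x^*)=M_1$, so in fact $M_P=M_1$. Therefore $L(P)=\bar r_P/M_P=\bar r_1/M_1=L(P_1)$ when $M_1>0$, which in particular gives $L(P)\ge L(P_1)$.

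The only step needing care is the degenerate case $M_1=0$. In that case $r_P\equiv0$, so $M_P=0$, and by the convention in Definition~\ref{def:rho} both ratios equal $1$, so the inequality still holds. There is no real obstacle here: item 3 actually holds with equality, and it rests on the diagonal point attaining $M_1$.
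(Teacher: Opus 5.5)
Your proof is correct. For items 1 and 2 it is the paper's argument: positive weights $\log(1/\pi_j(x_j))$ make $r_P$ a convex combination, and uniform marginals make those weights constant, so averaging over $\pi_1\otimes\pi_2$ passes through.

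For item 3 the paper stops at $\bar r_P=\bar r_1$ and $M_P\le M_1$, and concludes only $L(P)\ge L(P_1)$. Your extra step, evaluating at the diagonal point $(x^*,x^*)$ to get $r_P(x^*,x^*)=M_1$, upgrades this to $M_P=M_1$ and hence $L(P)=L(P_1)$. So the identical uniform two-fold product leaves the localization ratio unchanged. This makes the paper's closing remark that tensorization ``weakly delocalizes'' obstructions vacuous in this case.

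The diagonal argument does not need uniformity. When $P_1=P_2$, the two weights at $(x^*,x^*)$ coincide, so $M_P=M_1$ for any identical product; uniformity is needed only for $\bar r_P=\bar r_1$.

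You also treat the degenerate case $M_1=0$ through the convention $L=1$. The paper's division by $M_Q$ silently skips that case.
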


\begin{proof}
(1) $r_P(x_1,x_2)$ is a convex combination of $r_1(x_1)\le M_1$ and
$r_2(x_2)\le M_2$, so $r_P(x_1,x_2)\le\max(M_1,M_2)$.

(2) When $\log(1/\pi_j(x_j))=\log n_j$:
$r_P(x_1,x_2) = (r_1(x_1)\log n_1 + r_2(x_2)\log n_2)/\log(n_1n_2)$.
Averaging over $\pi_1\otimes\pi_2$ gives $\bar r_P = (\bar r_1\log n_1 + \bar r_2\log n_2)/\log(n_1n_2)$.

(3) If $n_1=n_2=n$ and $P_1=P_2=Q$, then $\bar r_P = \bar r_Q$ and
$M_P\le M_Q$, so $L(P) = \bar r_P/M_P\ge \bar r_Q/M_Q = L(Q)$.
\end{proof}

\begin{remark}[Significance]
The bound $M_P\le\max(M_1,M_2)$ mirrors the exact tensorization property of
the global contraction coefficient,
$\etaKL(P_1\otimes P_2) = \max(\etaKL(P_1),\etaKL(P_2))$
(see Polyanskiy--Wu~\cite{PolyanskiyWu2017}), demonstrating that worst-case
local retention scales consistently with global contraction. This is also
consistent with the general principle that entropy behaves well under products
(see Bobkov--Tetali~\cite{BobkovTetali2006}
and Cesi~\cite{Cesi2001} for related factorization results).
In the identical uniform two-fold product covered by Corollary~\ref{cor:tensor-bounds}(3),
the inequality $L(P)\ge L(P_1)$ means that this tensorization step weakly
delocalizes obstructions rather than concentrating them further.
\end{remark}


\end{document}